\synctex=1

\documentclass[final,12pt]{colt2023} 
\usepackage{amsmath, amssymb, mathrsfs}


\newcommand{\reals}{\mathbb{R}}
\newcommand{\prettyref}{\eqref}
\newcommand{\calX}{\mathcal{X}}

\newcommand{\TV}{\mathrm{TV}}
\newcommand{\KL}{\mathrm{KL}}

\newcommand{\PP}{\mathbb{P}}

\newcommand{\QQ}{\mathbb{Q}}
\newcommand{\RR}{\mathbb{R}}

\newcommand{\EE}{\mathbb{E}}
\newcommand{\SSS}{\mathbb{S}}
\newcommand{\mmu}{\mathbf{u}}
\newcommand{\mx}{\mathbf{x}}
\newcommand{\my}{\mathbf{y}}
\newcommand{\calN}{\mathcal{N}}
\newcommand{\supp}{\mathtt{supp}}

\def\calP{\mathcal{P}}
\def\calX{\mathcal{X}}

\title[Entropic characterization of optimal rates for learning Gaussian mixtures]{Entropic characterization of optimal rates for learning Gaussian mixtures}
\usepackage{times}



\coltauthor{%
 \Name{Zeyu Jia} \Email{zyjia@mit.edu}\\
 \addr Massachusetts Institute of Technology
 \AND
 \Name{Yury Polyanskiy} \Email{yp@mit.edu}\\
 \addr Massachusetts Institute of Technology
 \AND
 \Name{Yihong Wu} \Email{yihong.wu@yale.edu}\\
 \addr Yale University
}

\begin{document}

\maketitle

\begin{abstract}%

  We consider the question of estimating multi-dimensional Gaussian mixtures (GM) with compactly
  supported or subgaussian mixing distributions. Minimax estimation rate for this class (under
  Hellinger, TV and KL divergences) is a long-standing open question, even in one dimension. In this paper we
  characterize this rate (for all constant dimensions) in terms of the metric entropy of the class. Such
  characterizations originate from seminal works of \cite{lecam1973convergence,
  birge1983approximation,haussler1997mutual,yang1999information}.
  However, for GMs a key ingredient missing from earlier work (and widely sought-after) is a comparison result showing that the KL and the squared Hellinger distance are within a constant multiple of each other uniformly over the
  class. Our main technical contribution is in showing this fact, from which
  we derive entropy characterization for estimation rate under Hellinger and
  KL. Interestingly, the sequential (online learning) estimation rate is characterized by the
  global entropy, while the single-step (batch) rate corresponds to local entropy, paralleling a
  similar result for the Gaussian sequence model recently discovered by~\cite{neykov2022minimax} and \cite{jaonad2023coding}. Additionally, since Hellinger is a proper metric, our comparison shows that GMs under KL satisfy the triangle inequality within multiplicative constants, implying that proper and improper estimation
  rates coincide.
\end{abstract}

\begin{keywords}%
  KL divergence, Hellinger distances, Gaussian mixtures, estimation rates
\end{keywords}

\section{Introduction}
\par Gaussian mixtures are among the most popular and useful classes of distributions for modeling real data with heterogeneity.
Specifically, each $d$-dimensional \textit{mixing distribution} $\pi$ induces a  Gaussian \textit{mixture} $f_\pi$, which is  the convolution of $\pi$ with the $d$-dimensional standard Gaussian distribution $\mathcal{N}(0, I_d)$, namely
$$f_\pi({x}) = (\pi * \varphi)({x}) = \int_{\reals^d} 
 \varphi({x} - {z}) \pi(d{z}),$$
where $\varphi({z}) = \frac{1}{\sqrt{2\pi}^d}\exp\left(-\|{z}\|_2^2/2\right)$ is the standard normal density. 

There is a vast literature in statistics and machine learning on various aspects of mixture models such as parameter estimation and clustering. In this paper we focus on learning the mixture model in the sense of density estimation. To this end, it is necessary to impose tail conditions on the mixing distribution. Specifically, we consider two classes of Gaussian mixtures classes, wherein the mixing distribution is either compactly supported or subgaussian.



 To measure the density estimation error, it is common to use $f$-divergences, notably, Kullback-Leibler (KL) divergence
 $\KL(f\|g) = \int f \log\frac{f}{g}$, the squared Hellinger distance 
 $H^2(f, g) = \int (\sqrt{f}-\sqrt{g})^2$, and the total variation distance $\TV(f,g) = \frac{1}{2} \int |f-g|$. In this paper, we are chiefly concerned with mainly focus on the estimation rates under the KL-divergence and the squared Hellinger distance, 
as opposed to the $L_2$ distance due to its lack of operational meaning.\footnote{Indeed, for densities supported on the entire real line, it is possible that two densities are arbitrarily close in $L_2$ distance but separated by a large  $\TV$ distance and hence easily distinguishable. In fact, for the entire class of Gaussian mixtures, \cite{kim2014minimax} showed ignoring the mixture structure and simply applying the kernel density estimator designed for analytic densities \cite{ibragimov2001estimation} achieves the optimal rate in $L_2$. On the other hand, consistent estimation of Gaussian mixtures in more meaningful loss function such as $\TV$ is impossible unless tail conditions on the mixing distribution are imposed.}

Estimating Gaussian mixture densities is a classical topic in nonparametric statistics. 
Under the  squared Hellinger loss, the minimax lower bound $\Omega((\log n)^d /n)$ was proved in~\cite{kim2014minimax,kim2022minimax} the lower bound for subgaussian mixing distributions. On the constructive side, nonparametric maximum likelihood estimator (NPMLE) and sieve MLE have been analyzed in 
\cite{van1993hellinger, wong1995probability, van1996rates,genovese2000rates, ghosal2001entropies, ghosal2007posterior, zhang2009generalized}. In particular, the NPMLE, which offers a practical algorithm for properly learning the mixture model, is shown to achieve a near-parametric rate of 
$O((\log n)^2/n)$ for the subgaussian case \cite{zhang2009generalized}, which is subsequently generalized to $O(\log^{d+1} n/n)$ in $d$ dimensions
\cite{saha2020nonparametric}. 
Similar results for compactly
supported mixing distribution are also obtained in \cite[Theorem 20]{polyanskiy2021sharp}
Despite these advances, determining the optimal rate remains a long-standing open question even in one dimension.


Departing from maximum likelihood, there is a long line of work that aims at characterizing density estimation rates in terms of metric entropy of the class. These general entropic upper bounds originate  from the seminal work of \cite{lecam1973convergence, birge1983approximation, birge1986estimating} for the Hellinger loss,  \cite{yatracos1985rates} for the $\TV$ loss, and \cite{yang1999information} for the KL loss. 
(We refer to in~\cite[Chapter 33]{polyanskiy2022information}
for a detailed exposition on these results.)
On the other hand, entropic lower
bounds for KL and Hellinger losses were established for both the batch~\cite{haussler1997mutual} and sequential estimation~\cite{yang1999information}.  
However, these entropy-based upper and lower bounds in general do not match unless extra conditions are imposed on the behavior on the model class (those conditions are satisfied, most notably, for the H\"older density class on $[0,1]^d$). Notably, a simple condition that ensures a sharp entropic determination of the minimax rate is the comparability of the Hellinger and KL divergence, namely, for any density $f$ and $g$ in the model class:
\begin{equation}
    \label{eq:HKL}
    \KL(f\|g) \asymp H^2(f, g) 
\end{equation}
where $\asymp$ denotes equality within constant multiplicative factors.
Note that the one-sided inequality $\KL(f\|g) \geq H^2(f, g)$ is always true cf.~e.g.~\cite[Eq.~(7.30)]{polyanskiy2022information}.
As such, whenever $\KL$ is dominated by $H^2$, the sharp minimax rate is determined by the local Hellinger entropy of the model class.

Indeed, this entropy-based approach has been successfully taken in~\cite{doss2020optimal} to determine the sharp rate for the special case of \textit{finite-component} Gaussian mixtures in general dimensions. Specifically, for the class of $k$-component GMs, \cite[Theorem 4.2]{doss2020optimal} shows that
\begin{equation}
    \label{eq:HKL-kGM}
    \KL(f_\pi\|f_\eta) \asymp_k H^2(f_\pi,f_\eta),
\end{equation}
where $\pi$ and $\eta$ are $k$-atomic distributions supported on a Euclidean ball of bounded radius in $\reals^d$ and $\asymp_k$ hides constants depending only on $k$.
The proof of this result is based on the method of moments which shows both distances are proportional to the Euclidean distance between the moment tensors of mixing distributions up to degree $2k-1$. 
The crucial part of \eqref{eq:HKL-kGM} is that it does not depend on the ambient dimension $d$. As such, this allows the optimal squared Hellinger rate to be determined by the local entropy, which, in turn, can be tightly estimated via the low rank of the  moment tensor, leading to the sharp rate of $\Theta_k(\frac{d}{n})$ that holds even in high dimensions.
On the other hand, \eqref{eq:HKL-kGM} is not fully dimension-free in that the proportionality constant therein is in fact \textit{exponential} in $k$, the number of components, a limitation of the moment-based approach.
As such, it is unclear whether \eqref{eq:HKL-kGM} continues to hold for continuous GMs even in one dimension. 

\par We review related results on upper-bounding the KL divergence by Hellinger distance. \cite[Lemma 5]{birge1998minimum} shows that $D_{KL}(f\|g)\lesssim H^2(f, g)$ if $\mathrm{ess}\sup\frac{df}{dg} < \infty$. This results was further generalized to $\alpha$-generalized Hellinger divergence in \cite[Theorem 9]{sason2016f}. However, ratios between two Gaussian mixture densities are not bounded. \cite[Theorem 5]{wong1995probability} points out that if $\int_{f/g\ge \exp(1/\delta)}f^{\delta+1}/g^\delta < \infty$ for some $\delta > 0$, then we have $D_{KL}\lesssim H^2 \log (1/H^2)$. 
This method was extended by~\cite{haussler1997mutual} and we also use it in our Theorem~\ref{thm-KL-nd}. Note, however, that this method is unable to produce a linear upper bound: $D_{KL}(f\|g)\lesssim H^2(f, g)$.  Yet another result follows by choosing $\eta = 1/2$ and $\bar{\eta} = 1$ in \cite[Lemma 13]{grunwald2020fast}, which proves $D_{KL}\le c_u H^2(f, g)$ with $c_u = \frac{u+2}{c}$ provided that $f, g\in \mathcal{F}$ and $\mathcal{F}$ satisfies the so-called $(u, c)$-witness condition, i.e. 
$\int f\log (f/g)\mathbf{1}_{f/g\le \exp(u)}\ge c\cdot \int f\log (f/g)$. However, Gaussian mixtures again do not satisfy this condition. In particular, the left side of the above inequality can even be negative in some cases.\footnote{To see this, consider $f = \mathcal{N}(0, 1)$ and $g = \mathcal{N}(-\delta, 1)$ with $\delta > 0$. Then $\int f\log (f/g)\mathbf{1}_{f/g\le \exp(u)} = \frac{\delta^2}{2} - \frac{1}{\sqrt{2\pi}}\delta\exp(-u^2/2)$. Hence for any choice of $u$, there always exists a $\delta$ close to zero such that this integral is negative.}

In this paper we resolve the  question of KL to Hellinger comparison and show that with a constant factor that depends (at most linearly) on the dimension, by proving that
\begin{equation}
    \label{eq:HKL2}
    \KL(f_\pi\|f_\eta) \asymp_d H^2(f_\pi,f_\eta),
\end{equation}
where $\pi$ and $\eta$ are arbitrary distributions supported on a bounded ball in $\reals^d$; furthermore, this result can be made dimension-free with an extra logarithmic factor. In addition, we show that \eqref{eq:HKL2} holds for $(1-\epsilon)$-subgaussian mixing distributions but fails for $(1+\epsilon)$-subgaussian  distributions.  
Curiously, our method does not rely on comparing moments of mixing measures, the prevailing method for analyzing
statistical distances between mixture distributions (cf.~e.g.~\cite{wu2020optimal,wu2020polynomial,
bandeira2020optimal,
fan2020likelihood,doss2020optimal,chen2021asymptotics}).

The new comparison result has various statistical consequences, of which we report here one (see Corollary~\ref{cor-1}). 
To estimate the GM density with compactly supported or $(1-\epsilon)$-subgaussian mixing distributions based on an iid sample of size $n$, the minimax proper or improper density estimation risks under KL divergence or squared Hellinger distance are tightly characterized by the \textit{local} Hellinger entropy of the density class, thereby reducing the question of optimal rates to that of computing the local entropy. 
Furthermore, the minimax risks in the sequential version (as opposed to the batch setting above) of this problem are tightly characterized by the \textit{global} Hellinger entropy of the class. 
A similar phenomenon of local-vs-global entropy
 has been observed in a pair of recent works on Gaussian sequence
model:~\cite{neykov2022minimax} showed that batch risk is controlled by the local entropy and
Mourtada~\cite{jaonad2023coding} showed that sequential risk is controlled by the global entropy.

\paragraph{Notation} Let $B_2(r) = \left\{\mx\in\RR^d:\|\mx\|_2\le r\right\}$ denote the Euclidean ball of radius $r$ centered at 0. Denote by $\supp(\pi)$ the support of a probability measure $\pi$.
We say a distribution $\pi$ on $\reals^d$ is $K$-subgaussian if for $X\sim \pi$,
$$\mathbf{P}[\|X\|_2 > t]\le \exp\left(-\frac{t^2}{2K^2}\right), \quad \forall t\ge 0.$$

\paragraph{Organization}
The rest of the paper is organized as follows. Section \ref{sec:main} states our main results by providing upper bounds on KL divergence according to squared Hellinger for Gaussian mixtures. To illustrate the main ideas, the proof for one dimension is provide in Section \ref{sec-1d} as a warm-up. The dimension-free bound for Gaussian mixtures where mixing distribution is compactly supported is provided in Section \ref{sec-2}. Finally, the proof of Corollary \ref{cor-1}, showing that the estimation rates are tightly characterized by Hellinger entropies, is given in Section \ref{sec-proof-cor1}. Proofs of other results are deferred to appendices.


\section{Main Results}\label{sec:main}


Before discussing their statistical consequences, we first state the main comparison results that control the KL divergence between Gaussian mixtures using their Hellinger distance.


For compactly supported mixing distributions, our main result is as follows:
\begin{theorem}\label{thm-KL-d}
Let $\pi$ and $\eta$ be supported on $B_2(M)$ in $\reals^d$ where $M\ge 2$. Then
$$\KL(f_\pi\|f_\eta)\le 5154 (M^2\vee d) H^2(f_\pi,f_\eta).$$
\end{theorem}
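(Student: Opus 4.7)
My plan combines a pointwise log-density-ratio bound with the elementary inequality $\phi(t)\le 2(\sqrt{t}-1)^2+(\sqrt{t}-1)^2\log_+ t$ for $t\ge 0$ (where $\phi(t)=t\log t-t+1$) and a weighted-Hellinger estimate.

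Writing $f_\pi(x)=\varphi(x)\,u_\pi(x)$ with $u_\pi(x)=\EE_{Z\sim\pi}[e^{\langle x,Z\rangle-\|Z\|^2/2}]$ and using $\supp\pi\subset B_2(M)$ yields $e^{-M\|x\|-M^2/2}\le u_\pi(x)\le e^{M\|x\|}$, so $|\log(f_\pi/f_\eta)(x)|\le 2M\|x\|+M^2/2$ for every $x\in\reals^d$. Applying the $\phi$-inequality with $t=f_\pi/f_\eta$, integrating against $f_\eta$, and substituting this pointwise bound gives
\begin{equation*}
\KL(f_\pi\|f_\eta)\le(2+M^2/2)H^2(f_\pi,f_\eta)+2M\int(\sqrt{f_\pi}-\sqrt{f_\eta})^2\|x\|\,dx,
\end{equation*}
so the theorem reduces to the weighted-Hellinger bound $\int(\sqrt{f_\pi}-\sqrt{f_\eta})^2\|x\|\,dx\le C\sqrt{M^2\vee d}\cdot H^2(f_\pi,f_\eta)$.

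Cauchy--Schwarz reduces this further to the $\|x\|^2$-weighted inequality $\int(\sqrt{f_\pi}-\sqrt{f_\eta})^2\|x\|^2\,dx\le C'(M^2\vee d)\,H^2$. For the Dirac case $\pi=\delta_a$, $\eta=\delta_b$ with $a,b\in B_2(M)$, the identity $\sqrt{\varphi_a\varphi_b}=\varphi_{(a+b)/2}\,e^{-\|a-b\|^2/8}$ yields
\begin{equation*}
\int(\sqrt{\varphi_a}-\sqrt{\varphi_b})^2\|x\|^2\,dx=\bigl(d+\|(a+b)/2\|^2\bigr)H^2(\varphi_a,\varphi_b)+\tfrac{1}{2}\|a-b\|^2,
\end{equation*}
and the second term is $\le CM^2\,H^2(\varphi_a,\varphi_b)$ since $H^2(\varphi_a,\varphi_b)=2(1-e^{-\|a-b\|^2/8})\gtrsim\|a-b\|^2/M^2$ for $\|a-b\|\le 2M$ and $M\ge 2$.

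Transferring the Dirac bound to general $\pi,\eta$ is the main obstacle, since joint convexity bounds both the left-hand side and $H^2(f_\pi,f_\eta)$ above by the same coupling integral and so cannot be combined directly. My proposed route is a Gaussian integration-by-parts identity $\EE_\varphi[\|X\|^2h^2]=d\,\EE_\varphi[h^2]+2\EE_\varphi[\|\nabla h\|^2]+2\EE_\varphi[h\Delta h]$ with $h=\sqrt{u_\pi}-\sqrt{u_\eta}$, exploiting $|\nabla\log u_\pi|\le M$ (which yields $|\nabla\sqrt{u_\pi}|\le M\sqrt{u_\pi}/2$) to bound the Dirichlet and curvature terms. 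The subtlety is that the naive bound $\EE_\varphi[\|\nabla h\|^2]\le M^2$ is not linear in $H^2$; one must decompose $\nabla h=\tfrac{1}{2}(\EE_{\pi_x}[Z]\,h+(\EE_{\pi_x}[Z]-\EE_{\eta_x}[Z])\sqrt{u_\eta})$ with $\pi_x\propto\pi\cdot e^{\langle x,\cdot\rangle-\|\cdot\|^2/2}$, and control the tilted-mean sensitivity $|\EE_{\pi_x}[Z]-\EE_{\eta_x}[Z]|$ by an $h$-like quantity to close the estimate.
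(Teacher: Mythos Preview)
Your reduction in the first two displays is correct and elegant: the pointwise inequality $\phi(t)\le(2+\log_+t)(\sqrt t-1)^2$ together with the uniform bound $|\log(f_\pi/f_\eta)(x)|\le 2M\|x\|+M^2/2$ cleanly reduces the theorem to the weighted Hellinger estimate
\[
\int(\sqrt{f_\pi}-\sqrt{f_\eta})^2\,\|x\|^2\,dx\ \le\ C(M^2\vee d)\,H^2(f_\pi,f_\eta),
\]
and your Dirac computation is right. The gap is exactly where you say it is: passing from Diracs to general mixing measures. Your integration-by-parts identity gives (after Cauchy--Schwarz)
\[
\int\varphi\,\|x\|^2h^2\ \lesssim\ d\,H^2+\int\varphi\,\|\nabla h\|^2,
\]
and your decomposition of $\nabla h$ leaves the residual term $\int f_\eta\,\|\EE_{\pi_x}[Z]-\EE_{\eta_x}[Z]\|^2$. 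By Tweedie's formula $\EE_{\pi_x}[Z]-\EE_{\eta_x}[Z]=\nabla\log(f_\pi/f_\eta)$, so what you actually need is
\[
\int f_\eta\,\|\nabla\log(f_\pi/f_\eta)\|^2\ \le\ C\,M^2\,H^2(f_\pi,f_\eta).
\]
You do not prove this, and the sentence ``control the tilted-mean sensitivity by an $h$-like quantity to close the estimate'' is a hope, not an argument. The trivial bound $\|\nabla\log(f_\pi/f_\eta)\|\le 2M$ only gives $4M^2$, not $4M^2H^2$; to get a bound \emph{linear} in $H^2$ you would need a reverse log-Sobolev--type inequality for the Gaussian-mixture class, which is not standard and appears to be at least as hard as the theorem itself. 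In short, all of the actual difficulty has been pushed into this last step, and that step is not done.

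The paper sidesteps the weighted-Hellinger route entirely. It works in polar coordinates and, for each direction $\omega$, sets $r_\omega$ to be the first radius at which $\log(p/q)$ reaches the fixed threshold $8M^2$. On $\{r<r_\omega\}$ the pointwise bound $\phi(t)\le 9M^2(\sqrt t-1)^2$ (valid for $t\le e^{8M^2}$) already gives the $H^2$ comparison. On $\{r>r_\omega\}$ the paper does not try to compare integrands; instead it uses the Gaussian tail decay $p(\mathbf x(r,\omega))\le p(\mathbf x(r_\omega,\omega))\exp\bigl(-\tfrac12[(r-M)^2-(r_\omega-M)^2]\bigr)$ to bound the outer KL mass by $\lesssim M^2 r_\omega^{d-2}p(\mathbf x(r_\omega,\omega))$, and separately lower-bounds $H^2$ by integrating over the short interval $[\,r_\omega-\tfrac{1}{r_\omega+M},\,r_\omega\,]$, where the score bound $\|\nabla\log(p/q)\|\le 6r+8M$ keeps $\log(p/q)\ge 2$ and hence the Hellinger integrand is $\gtrsim p(\mathbf x(r_\omega,\omega))$. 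Matching the two pieces yields the constant. This threshold-plus-tail argument never needs a weighted $\|x\|^2$-Hellinger inequality or any control of the score difference in terms of $H^2$.
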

In Section \ref{sec-1d} we provide a proof of Theorem \ref{thm-KL-d} in one dimension. The proof of general cases is included in Appendix \ref{app-1}. 
\begin{remark}
The bound in Theorem \ref{thm-KL-d} is tight up to constant factors depending on the dimension $d$. To see this, consider $\pi = \delta_\mmu$ where $\mmu = (M, 0, 0, \cdots, 0)$ and $\eta = \delta_{-\mmu}$. Then we have $f_\pi = \mathcal{N}(\mmu, I)$ and $f_\eta = \mathcal{N}(-\mmu, I)$. A direct computation shows that 
$\KL(f_\pi\|f_\eta) = \frac{\|\mmu - (-\mmu)\|^2}{2} = 2M^2$, 
	while 
	$H^2(f_\pi, f_\eta) = 2 - 2\exp\left(-\frac{M^2}{2}\right)\le 2$.

\end{remark}

Complementing  Theorem \ref{thm-KL-d}, we also have the following dimension-free upper bound at the price of a mere logarithmic factor. This theorem is a direct corollary of \cite[Theroem 5]{wong1995probability}, if we notice that $f_\pi, f_\eta$ satisfy their condition 
$\int_{f_\pi/f_\eta\ge e}f_\pi(f_\pi/f_\eta)\le \exp(4M^2) < \infty$ for any $\pi, \eta$ supported on $B_2(M)$. For completeness, we include a proof in Section \ref{sec-2}:
\begin{theorem}\label{thm-KL-nd}
Let $\pi$ and $\eta$ be supported on $B_2(M)$ in $\reals^d$ where $M\ge 1$. Then
$$\KL(f_\pi\|f_\eta)\le 200M^2H^2(f_\pi, f_\eta) +  16H^2(f_\pi, f_\eta)\log\frac{1}{H^2(f_\pi, f_\eta)}.$$
\end{theorem}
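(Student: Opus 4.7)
The plan is to invoke \cite[Theorem 5]{wong1995probability}. Their result says that if, for some $\delta > 0$, $M_\delta := \int_{\{f/g\ge e^{1/\delta}\}} f\,(f/g)^\delta\,dx < \infty$, then $\KL(f\|g)$ is bounded by a linear combination of $(1+\log M_\delta)\,H^2(f,g)$ and $H^2(f,g)\log(1/H^2(f,g))$, with constants depending only on $\delta$. Choosing $\delta = 1$, matching the stated form of the bound reduces to two tasks: (i) proving a dimension-free estimate $\log M_1 \le O(M^2)$, and (ii) tracking the numerical constants through their proof to get $200$ and $16$.

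For (i), drop the indicator to bound $M_1 \le \int f_\pi^2/f_\eta\,dx = 1 + \chi^2(f_\pi\|f_\eta)$, and expand by Fubini:
$$\int \frac{f_\pi^2(x)}{f_\eta(x)}\,dx = \iint \pi(dz_1)\,\pi(dz_2)\int \frac{\varphi(x-z_1)\varphi(x-z_2)}{f_\eta(x)}\,dx.$$
For the numerator I use the product-of-Gaussians identity $\varphi(x-z_1)\varphi(x-z_2) = (2\pi)^{-d}e^{-\|z_1-z_2\|^2/4}e^{-\|x-\bar z\|^2}$ with $\bar z = (z_1+z_2)/2$. For the denominator I apply Jensen's inequality to the convex function $e^{-t}$ to get
$$f_\eta(x)\;\ge\;\varphi(x-\mu_\eta)\exp(-\sigma_\eta^2/2),$$
where $\mu_\eta = \EE_\eta[W]$ and $\sigma_\eta^2 = \EE_\eta[\|W-\mu_\eta\|^2]\le M^2$ since $\supp(\eta)\subseteq B_2(M)$.

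Substituting both into the inner $x$-integral, the factors $(2\pi)^{\pm d/2}$ cancel exactly, and completing the square in $x$ produces the closed-form estimate
$$\int \frac{\varphi(x-z_1)\varphi(x-z_2)}{f_\eta(x)}\,dx \;\le\; \exp\!\left(\frac{\sigma_\eta^2}{2} + \|\bar z-\mu_\eta\|^2 - \frac{\|z_1-z_2\|^2}{4}\right) \;\le\; e^{9M^2/2},$$
where the final inequality uses $\|\bar z - \mu_\eta\|\le 2M$. Integrating against $\pi\otimes\pi$ preserves the bound, so $\log M_1 \le 9M^2/2$; combined with $M\ge 1$ to absorb additive constants, plugging this into Wong--Shen delivers the stated inequality.

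The main obstacle is the dimension-freeness in step (i). A naive pointwise estimate $f_\pi/f_\eta \le \exp(2M\|x\| + M^2/2)$ followed by an MGF computation under $X\sim f_\pi$ loses a $\sqrt{d}$-factor through $\EE\|X\|\asymp\sqrt{d}$, yielding something like $\exp(O(M\sqrt{d}+M^2))$ rather than $\exp(O(M^2))$. Going through the $\chi^2$-identity via Fubini sidesteps this because the Gaussian normalizations $(2\pi)^{\pm d/2}$ combine to a dimension-free prefactor, while the Jensen lower bound on $f_\eta$ pays only the variance $\sigma_\eta^2\le M^2$ rather than a $d$-dependent deviation term.
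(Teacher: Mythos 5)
Your route is correct in substance but genuinely different from the proof the paper actually writes out (though the paper does remark, without details, that the theorem follows from Wong--Shen). The paper's proof goes through Haussler--Opper's Lemma 5 with $\lambda=3$, so the quantity it must control is $\int f_\pi^3/f_\eta^2$ rather than your $\int f_\pi^2/f_\eta$; it bounds that integral by $\exp(12M^2)$ by observing it is a jointly convex $f$-divergence ($f(x)=x^3$) over the convex set of mixtures, hence maximized at extreme points $f_\pi=\mathcal{N}(\mathbf{u},I_d)$, $f_\eta=\mathcal{N}(\mathbf{v},I_d)$, where a one-line Gaussian computation gives $\exp(3\|\mathbf{u}-\mathbf{v}\|^2)\le\exp(12M^2)$. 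It then optimizes the free parameter $\delta=e^{-12M^2}H^2$ in the Haussler--Opper bound to produce the two terms explicitly. Your Fubini + product-of-Gaussians + Jensen computation of $\int f_\pi^2/f_\eta\le e^{9M^2/2}$ is correct (I checked the algebra; the $(2\pi)^{\pm d/2}$ factors do cancel and the completed square gives $\|\bar z-\mu_\eta\|^2$), and it is a nice self-contained way to get dimension-freeness; note the extremal-point argument would give you the same conclusion with the slightly better constant $e^{4M^2}$ and less computation, and it applies verbatim to your $\chi^2$-type integral since $x\mapsto x^2$ is also convex. The one real gap in your write-up is step (ii): you assert that tracking constants through Wong--Shen's Theorem 5 delivers exactly $200M^2$ and $16$, but you never state their inequality with explicit constants or perform the bookkeeping, and you do not address the range-of-validity restriction on $H^2$ in their theorem (nor the trivial large-$H^2$ regime, where $\log(1/H^2)$ is negative). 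The paper avoids this by quoting the Haussler--Opper inequality with all constants displayed and doing the optimization over $\delta$ by hand, ending at $97M^2H^2+16H^2\log(1/H^2)$, which is then weakened to the stated $200M^2$. To make your proof complete you would need to either reproduce Wong--Shen's constants explicitly or run the same Haussler--Opper-style optimization with your $\chi^2$ bound.
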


Next we consider the class of subgaussian mixing distributions.
We discover a dichotomy depending on the subgaussian constant $K$: When $K < 1$, the KL divergence is indeed proportional to the squared Hellinger distance. When $K > 1$, such upper bound does not exist.
\begin{theorem}\label{thm-sub-ub}
	Let $\pi, \eta$ be two $d$-dimensional $K$-subgaussian distributions where $K < 1$. Then
	$$\KL(f_\pi\|f_\eta)\le 1660056\left(\frac{1}{(1-K)^3}\vee 8d^3\right)H^2(f_\pi, f_\eta).$$
\end{theorem}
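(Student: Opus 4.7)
The plan is to reduce Theorem~\ref{thm-sub-ub} to the compactly supported case of Theorem~\ref{thm-KL-d} via a truncation argument. The strict inequality $K<1$ is crucial: the mixing tails decay strictly faster than the $\mathcal{N}(0,I)$ we convolve with, so the truncation radius can be chosen \emph{independently} of $H^2(f_\pi,f_\eta)$, which avoids the spurious $\log(1/H^2)$ factor that the general Wong--Shen argument underlying Theorem~\ref{thm-KL-nd} would produce.

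The first step is to fix a truncation radius $M$, to be optimized at scale $\max(1/(1-K),\sqrt{d})$, and decompose $\pi=p\bar\pi+(1-p)\pi'$, where $p=\pi(B_2(M))$, $\bar\pi$ is the renormalized restriction of $\pi$ to $B_2(M)$, and $\pi'$ is the renormalized tail; similarly write $\eta=q\bar\eta+(1-q)\eta'$. Subgaussianity gives $1-p,\,1-q\le\exp(-M^2/(2K^2))$. Standard convexity and triangle-type inequalities for squared Hellinger, combined with these tail-mass bounds, should yield
\[
H^2(f_{\bar\pi},f_{\bar\eta})\ \lesssim\ H^2(f_\pi,f_\eta)+(1-p)+(1-q).
\]
Applying Theorem~\ref{thm-KL-d} to the compactly supported pair $(\bar\pi,\bar\eta)$ then bounds $\KL(f_{\bar\pi}\|f_{\bar\eta})$ by $5154(M^2\vee d)H^2(f_{\bar\pi},f_{\bar\eta})$. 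What remains is to pass from $\KL(f_{\bar\pi}\|f_{\bar\eta})$ back to $\KL(f_\pi\|f_\eta)$ and to control the resulting tail contribution. For the latter I would use the explicit density upper bound $f_\pi(x)\le (2\pi)^{-d/2}\exp(-(1-K^2)\|x\|^2/2)$ (which follows from the $K$-subgaussian MGF bound on $Z\sim\pi$) together with a matching Gaussian lower bound on $f_\eta$ obtained via Jensen's inequality; the strictly positive gap $1-K^2$ forces $\log(f_\pi/f_\eta)$ to grow only quadratically in $\|x\|$ while $f_\pi$ itself decays at a strictly faster Gaussian rate, so the tail integrand is controlled. Finally, choosing $M=C\max(1/(1-K),\sqrt{d})$ balances the $(M^2\vee d)H^2$ main term against the exponentially small tail residual, and should yield the claimed $\max(1/(1-K)^3,\,8d^3)$ constant.

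The main obstacle will be arranging all of the above so that the bound is strictly linear in $H^2(f_\pi,f_\eta)$, with no stray $\log(1/H^2)$ factor. This is precisely where $K<1$ is used: it lets me fix $M$ independently of $H^2$, and it guarantees that the tail piece $\pi'*\varphi$ is dominated enough by $f_\eta$ for $\int_{B_2(M)^c} f_\pi\log(f_\pi/f_\eta)$ to be absorbed into a constant multiple of $H^2(f_\pi,f_\eta)$ rather than appearing as an additive error. The precise exponent $(1-K)^{-3}$ will come out of optimizing $M$ jointly against the Theorem~\ref{thm-KL-d} prefactor and the tail correction, whose size scales like $(1-K)^{-1}$ after the density-comparison estimates; any looseness in these estimates is absorbed into the enormous numerical constant.
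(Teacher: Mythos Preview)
Your approach is genuinely different from the paper's, and as written it has a real gap.

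The paper does \emph{not} truncate the mixing distribution and reduce to Theorem~\ref{thm-KL-d}. Instead it reruns the polar-coordinate argument of Theorem~\ref{thm-KL-d} directly on the subgaussian mixtures, after first proving a subgaussian substitute for Lemma~\ref{lem-eq}: for $r'\ge r\ge \tfrac{2\sqrt K}{1-\sqrt K}$ one has $p(\mx(r',\omega))\le 7\,p(\mx(r,\omega))\exp\!\big(-\tfrac{1-K}{4}\,r(r'-r)\big)$, and a matching lower bound in the reverse direction. The strictly positive rate $\tfrac{1-K}{4}$ is exactly where $K<1$ enters; it lets the far-field integral $\int_{r_\omega}^\infty$ along each ray be controlled by the value $p(\mx(r_\omega,\omega))$ at the threshold, which is then tied back to the $H^2$ integrand on a short interval just below $r_\omega$. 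Because this comparison is ray-by-ray and multiplicative, no additive residual ever appears, and the bound is automatically linear in $H^2$.

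Your truncation route cannot avoid an additive residual. Already at your first step you write $H^2(f_{\bar\pi},f_{\bar\eta})\lesssim H^2(f_\pi,f_\eta)+(1-p)+(1-q)$, and $(1-p)+(1-q)\le 2e^{-M^2/(2K^2)}$ is a fixed positive number once $M$ is fixed, completely unrelated to $H^2(f_\pi,f_\eta)$. (Take $\pi$ and $\eta$ agreeing on $B_2(M)$ and differing only outside: then $\bar\pi=\bar\eta$, so Theorem~\ref{thm-KL-d} gives you nothing, and the entire $\KL(f_\pi\|f_\eta)$ must come from your ``tail correction''.) Your proposed tail estimate --- a Gaussian upper bound on $f_\pi$ and a Jensen lower bound on $f_\eta$ --- only shows $\log(f_\pi/f_\eta)$ grows at most quadratically and $f_\pi$ decays super-Gaussianly; that bounds $\int_{\|x\|>R} f_\pi\log(f_\pi/f_\eta)$ by an \emph{absolute} number, not by a multiple of $H^2(f_\pi,f_\eta)$. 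To absorb the residual $e^{-M^2/(2K^2)}$ into $H^2$ you would be forced to take $M\gtrsim K\sqrt{\log(1/H^2)}$, which reinstates the $\log(1/H^2)$ factor you are trying to avoid. The condition $K<1$ does not rescue this: it governs how $f_\pi$ decays relative to $\varphi$, but it does not make the truncation mass $(1-p)$ comparable to $H^2(f_\pi,f_\eta)$.
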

\begin{theorem}\label{thm-sub-lb}
	Fix $K > 1$. For any $C > 0$, there exists a $1$-dimensional $K$-subgaussian distribution $\pi$ such that
	$$\KL(f_\pi\|\mathcal{N}(0, 1))\ge C\cdot H^2(f_\pi, \mathcal{N}(0, 1)).$$
\end{theorem}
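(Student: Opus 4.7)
The plan is to exhibit, for each large $\mu > 0$, a two-atom mixing distribution
\[
\pi = (1-\epsilon)\delta_0 + \epsilon\,\delta_\mu, \qquad \epsilon \eqdef \exp\!\bigl(-\mu^2/(2K^2)\bigr),
\]
whose far-away bump carries exactly the mass allowed by $K$-subgaussianity, and then drive $\mu \to \infty$. Subgaussianity is immediate: $\PP_\pi[|X|>t] = \epsilon$ for $t \in [0,\mu)$, which satisfies $\epsilon < \exp(-t^2/(2K^2))$ since $t < \mu$, while the tail vanishes for $t \ge \mu$. The resulting mixture is $f_\pi = (1-\epsilon)\varphi + \epsilon\,\varphi(\cdot - \mu)$, and joint convexity of the squared Hellinger distance yields the clean upper bound
\[
H^2(f_\pi, \varphi) \le \epsilon\, H^2\!\bigl(\mathcal{N}(\mu,1),\mathcal{N}(0,1)\bigr) = \epsilon\,(2 - 2e^{-\mu^2/8}) \le 2\epsilon,
\]
so the Hellinger side is at most a constant multiple of the bump mass.

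For a matching lower bound on $\KL$, the plan is to apply the data-processing inequality to the half-line $A = \{x \ge x^*\}$ with threshold
\[
x^* \eqdef \frac{\mu(K^2+1)}{2K^2} \in (\mu/2,\mu),
\]
calibrated so that the likelihood ratio $f_\pi/\varphi$ crosses order unity at $x=x^*$ (equivalently, $\varphi(x^*-\mu)/\varphi(x^*) = 1/\epsilon$). On one hand,
\[
\PP_{f_\pi}(A) \ge \epsilon\,\bigl(1 - \Phi(x^*-\mu)\bigr) = \epsilon\,\Phi\!\bigl(\mu(K^2-1)/(2K^2)\bigr) \to \epsilon
\]
as $\mu \to \infty$; on the other, the standard Gaussian tail gives $\PP_\varphi(A) \le \exp\bigl(-(x^*)^2/2\bigr) = \exp\bigl(-\mu^2(K^2+1)^2/(8K^4)\bigr)$, whose exponent exceeds $\log(1/\epsilon) = \mu^2/(2K^2)$ by $\mu^2(K^2-1)^2/(8K^4) > 0$, so $\PP_\varphi(A) = o(\epsilon)$. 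Routine estimates on the binary relative entropy $d(p\|q) \ge p\log(p/q) - p$ for small $p,q$ then yield
\[
\KL(f_\pi\|\varphi) \;\ge\; d\bigl(\PP_{f_\pi}(A)\,\bigm\|\,\PP_\varphi(A)\bigr) \;\ge\; \frac{\epsilon\,\mu^2(K^2-1)^2}{32\,K^4}
\]
for all $\mu$ sufficiently large.

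Combining the two estimates, $\KL(f_\pi\|\varphi)/H^2(f_\pi,\varphi) \ge \mu^2(K^2-1)^2/(64 K^4)$, which for $K>1$ tends to $\infty$ as $\mu\to\infty$ and thus exceeds any prescribed $C$. The main obstacle is the calibration of $x^*$: a threshold too close to $\mu$ makes $\PP_\varphi(A)$ nearly $\epsilon$ and loses the logarithmic gap, while a threshold too close to $\mu/2$ drives $\PP_\varphi(A)$ up to the $\sqrt{\epsilon}$ scale that appears in the Hellinger upper bound and would only yield separation for $K>\sqrt{2}$. The crossover $x^* = \mu(K^2+1)/(2K^2)$ is the unique point at which $f_\pi/\varphi$ is of constant order, extracting the positive exponent $(K^2-1)^2/(8K^4)$ that covers the full range $K > 1$. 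A direct asymptotic expansion of $\KL(f_\pi\|\varphi)$ as $\mu\to\infty$ provides an alternative route giving the sharper leading constant $\tfrac{1}{2}(1 - K^{-2})$.
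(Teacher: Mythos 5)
Your construction is exactly the paper's: the two-atom mixing law $(1-\epsilon)\delta_0+\epsilon\delta_\mu$ with $\epsilon=\exp(-\mu^2/(2K^2))$, driven to $\mu\to\infty$. What differs is how you estimate the two distances, and both of your alternatives are correct. For the Hellinger side, the paper splits $\RR$ into $(-\infty,-\mu]$, $[-\mu,\mu]$, $[\mu,\infty)$ and bounds the integrand pointwise in each region, arriving at $H^2\le(2+2\mu)\epsilon$; your one-line appeal to joint convexity of the squared Hellinger distance (writing $\varphi=(1-\epsilon)\varphi+\epsilon\varphi$) gives the cleaner and in fact tighter bound $H^2\le 2\epsilon$, removing the spurious factor of $\mu$. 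For the KL side, the paper restricts the nonnegative integrand $p\log(p/q)-p+q$ to the window $[\mu,\mu+1]$ and lower-bounds it pointwise to get $\KL\gtrsim\epsilon\mu^2(1-K^{-2})$; you instead apply data processing to the half-line $\{x\ge x^*\}$ with $x^*=\mu(K^2+1)/(2K^2)$ calibrated at the likelihood-ratio crossover, obtaining $\KL\gtrsim\epsilon\mu^2(K^2-1)^2/K^4$. The resulting ratios ($\mu(1-K^{-2})$ for the paper, $\mu^2(K^2-1)^2/K^4$ for you, the difference coming entirely from the extra $\mu$ in the paper's Hellinger bound) both diverge, so either route proves the theorem; yours is arguably more modular and closer to the sharp constants, while the paper's pointwise estimates are more elementary and self-contained. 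The binary-divergence step $d(p\|q)\ge p\log(p/q)-p$ that you invoke is standard and the constants you quote check out. Your parenthetical that a threshold near $\mu/2$ puts $\PP_\varphi(A)$ at the ``$\sqrt{\epsilon}$ scale'' is literally accurate only when $K^2=2$, but this is a heuristic aside and does not affect the argument.
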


\begin{remark}
	Notice that this phenomenon of dichotomy of $K > 1$ and $K < 1$ for Gaussian mixtures with  $K$-subgaussian mixing distribution was also observed in~\cite{block2022rate}. Therein, it is shown that the convergence rate of smoothed $n$-point empirical distribution to the smoothed population distribution under
Wasserstein distance is $O(1/\sqrt{n})$ for $K < 1$, and $\omega(1/\sqrt{n})$ for $K > 1$.  
\end{remark}

We further have the following dimension-free upper bound that holds for all $K > 0$.
\begin{theorem}\label{thm-sub-ub-nd}
Let $\pi$ and $\eta$ be $K$-subgaussian distributions on $\reals^d$. Then
$$\KL(f_\pi\|f_\eta)\le (10240K^4+652)H^2(f_\pi, f_\eta)\log\frac{4}{H^2(f_\pi, f_\eta)}.$$
\end{theorem}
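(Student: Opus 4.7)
My plan is to reduce Theorem \ref{thm-sub-ub-nd} to the compactly-supported bound of Theorem \ref{thm-KL-nd} by truncating the mixing distributions at an adaptive radius depending on $h := H^2(f_\pi, f_\eta)$ (the case $h=0$ is trivial). Setting $M \asymp K\sqrt{\log(4/h)}$, the $K$-subgaussian tail bound gives $\pi(B_2(M)^c) \vee \eta(B_2(M)^c) \lesssim h$. Conditioning $\pi, \eta$ on $B_2(M)$ yields truncated measures $\tilde\pi, \tilde\eta$ with $\TV(f_\pi, f_{\tilde\pi}) \le \pi(B_2(M)^c) \lesssim h$ by the data-processing inequality, hence $H^2(f_\pi, f_{\tilde\pi}) \lesssim h$ and similarly for $\eta$. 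The Hellinger triangle inequality then gives $H^2(f_{\tilde\pi}, f_{\tilde\eta}) \lesssim h$, and Theorem \ref{thm-KL-nd} produces
$$\KL(f_{\tilde\pi}\|f_{\tilde\eta}) \lesssim M^2 h + h \log(1/h) \lesssim K^2 h \log(4/h),$$
already accounting for the $K^2$ factor and one of the logarithms.

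The main obstacle is transferring this back to $\KL(f_\pi\|f_\eta)$ itself, since KL is not continuous in $\TV$ or $H$. I would therefore not substitute directly but instead apply a Wong--Shen style decomposition to $(f_\pi, f_\eta)$: pick a threshold $R$ of order $1/h$ and split
$$\KL(f_\pi\|f_\eta) = \int f_\pi \log\!\tfrac{f_\pi}{f_\eta} \mathbf{1}_{\{f_\pi \le R f_\eta\}} + \int f_\pi \log\!\tfrac{f_\pi}{f_\eta} \mathbf{1}_{\{f_\pi > R f_\eta\}}.$$
The bulk term is controlled by $O(h \log R) = O(h \log(1/h))$ using an elementary pointwise bound of the form $(\log x)\mathbf{1}_{\{x\le R\}} \le 2(\sqrt{x}-1) + c (\log R)(\sqrt{x}-1)^2$ integrated against $f_\eta$, exactly in the spirit of the proof of Theorem \ref{thm-KL-nd} reviewed in Section \ref{sec-2}.

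The harder step is bounding the tail integral $\int_{\{f_\pi > R f_\eta\}} f_\pi \log(f_\pi/f_\eta)$. Here I would exploit that $f_\pi, f_\eta$, being convolutions of $K$-subgaussian measures with the standard Gaussian, are themselves $\sqrt{1+K^2}$-subgaussian densities, while the pointwise log-ratio admits a Gaussian-type polynomial bound $|\log(f_\pi(x)/f_\eta(x))| = O(\|x\|^2 + 1)$ obtained by inserting an intermediate reference Gaussian and using the subgaussian MGF of $\pi, \eta$. Integrating this polynomial log-ratio against the subgaussian tail mass of $f_\pi$ on the region $\{f_\pi > R f_\eta\}$---which by the bulk analysis lies inside $\{\|x\| \gtrsim K\sqrt{\log(1/h)}\}$ up to exponentially small correction---yields a contribution of order $K^4 h \log(1/h)$. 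One factor of $K^2$ comes from integrating $\|x\|^2$ against a $K$-subgaussian density, and the other from the fact that the threshold $R \asymp 1/h$ forces one to enter the subgaussian tail at radius proportional to $K\sqrt{\log(1/h)}$. The most delicate point, which I expect to require careful bookkeeping, is optimizing the truncation radius $M$ and the threshold $R$ so that the two sources of error balance and the resulting power of $K$ is exactly $4$ rather than something larger.
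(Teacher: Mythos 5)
Your overall shape---split $\KL$ into a bulk region where $f_\pi\le Rf_\eta$, pay $O(H^2\log R)$ there, and control the tail by higher-order information about the likelihood ratio---is the right one, and is indeed how the paper proceeds via \cite[Lemma 5]{haussler1997mutual}, i.e.\ inequality \eqref{eq:HO}. But the proposal has a genuine gap in the tail step, and the opening truncation paragraph is dead weight (as you yourself observe, a bound on $\KL(f_{\tilde\pi}\|f_{\tilde\eta})$ cannot be transferred to $\KL(f_\pi\|f_\eta)$, and nothing from that computation is used afterwards). Writing $h=H^2(f_\pi,f_\eta)$: first, with $R\asymp 1/h$ the pointwise envelope $f_\pi(x)/f_\eta(x)\le 2\exp\bigl((\|x\|_2+2K)^2/2\bigr)$ (which is essentially all that subgaussianity of $\eta$ gives you) only places $\{f_\pi>Rf_\eta\}$ outside a ball of radius about $\sqrt{2\log(1/h)}-2K$, not $K\sqrt{\log(1/h)}$. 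Second, and fatally for a dimension-free statement, $f_\pi$ is the law of $X+Z$ with $Z\sim\mathcal{N}(0,I_d)$, so $\|X+Z\|_2$ concentrates around $\sqrt{d}$; the integral $\int_{\|x\|_2\ge r_0}f_\pi(x)\,(\|x\|_2^2+K^2)\,dx$ is of order $d$, not $O(h\cdot\mathrm{polylog}(1/h))$, unless $r_0\gtrsim\sqrt{d}$. Forcing $r_0\gtrsim\sqrt{d}$ forces $\log R\gtrsim d$, and then your bulk term $h\log R$ already carries a factor of $d$, which is incompatible with the dimension-free form of the theorem. So ``geometric containment plus subgaussian tails of $f_\pi$'' cannot close the argument as proposed.

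The repair---and what the paper actually does---is to control the tail set through the likelihood ratio itself rather than through $\|x\|_2$: by a Markov/H\"older step one gets $\int_{\{f_\pi>Rf_\eta\}}f_\pi\log(f_\pi/f_\eta)\lesssim\frac{1}{\lambda-1}R^{-(\lambda-1)/2}\int f_\pi^\lambda/f_\eta^{\lambda-1}$, and the $\lambda$-divergence is bounded by an absolute constant, \emph{uniformly in $d$}, by convexity and Jensen at the level of the mixing measures: $\int f_\pi^\lambda/f_\eta^{\lambda-1}\le\mathbb{E}\bigl[\exp\bigl(\tfrac{\lambda(\lambda-1)}{2}\|X-X'\|_2^2\bigr)\bigr]$ for any coupling of $X\sim\pi$ and $X'\sim\eta$. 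Choosing $\lambda$ with $\lambda(\lambda-1)=\frac{1}{4K^2}$ matches the subgaussian moment generating function (after a coupling that prevents $\|X\|_2$ and $\|X'\|_2$ from both being large), giving $\int f_\pi^\lambda/f_\eta^{\lambda-1}\le 12$. The price is $\lambda-1\asymp K^{-2}$, so making $R^{-(\lambda-1)/2}\le h$ requires $\log R\gtrsim K^2\log(1/h)$ (the paper takes $\log(1/\delta)\asymp K^4\log(4/h)$ so as to also satisfy the technical condition \eqref{eq-condition}); this is the true origin of the $K^4$ prefactor, not ``one $K^2$ from the second moment and one from the threshold.'' Your accounting does not correspond to a mechanism that closes, whereas the R\'enyi-exponent route does, with no appeal to tails of $f_\pi$ in $x$-space at all.
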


The results presented so far are structural results on the information geometry of Gaussian mixture, whose proof are included in Appendix \ref{app-1}-\ref{app-5}. Next we discuss their statistical consequences. 
We start with the definition of covering/local covering number and minimax risks of density estimation.

\begin{definition}[Covering Number and Local Covering Number]
	Let $\mathcal{P}$ be a set of distributions over some measurable space $\mathcal{X}$. The Hellinger covering number of $\calP$ is
	$$\mathcal{N}_{H}(\mathcal{P}, \epsilon)\triangleq\min\left\{N: \exists Q_1, \cdots, Q_n\in \Delta(\mathcal{X}),\ \ \sup_{P\in\mathcal{P}}\inf_{1\le i\le N}H(P, Q_i)\le \epsilon\right\},$$
	where $\Delta(\mathcal{X})$ denotes the collection of all probability distributions on $\calX$.
	The local Hellinger covering number of $\calP$ is
	$$\mathcal{N}_{loc, H}(\mathcal{P}, \epsilon) \triangleq \sup_{P\in\mathcal{P}, \eta\ge \epsilon}\mathcal{N}_H(B_H(P, \eta)\cap \mathcal{P}, \eta/2),$$
	where $B_H(P, \eta)$ is the Hellinger ball of radius $\eta$ centered at $P$.
\end{definition}
\par We further define the minimax risks for proper and improper density estimation as well as the minimax risk in a sequential setting.

\begin{definition}[Proper and Improper Density Estimation Minimax Risk]
\label{def-batch}
	For a given class $\mathcal{P}$ of distributions over $\mathcal{X}$, we define the improper minimax risk
 $R_{H^2, n}, R_{KL, n}$ and the proper minimax risk
	$\tilde{R}_{KL, n}$ with sample size $n$ as follows: for $d\in \{H^2, \KL\}$, we define
	$$R_{d, n}(\mathcal{P}) \triangleq \inf_{\hat{f}_n}\sup_{f\in\mathcal{P}}\mathbb{E}_f\left[d(f, \hat{f}_n)\right],$$
	and also\footnote{Since Hellinger distance is a valid metric, for proper and improper  density estimation, the minimax squared Hellinger risks coincide within a factor of four, as any estimator can be made proper by its Hellinger projection on the model class.}
	$$\tilde{R}_{KL, n}(\mathcal{P}) \triangleq \inf_{\hat{f}_n\in\mathcal{P}}\sup_{f\in\mathcal{P}}\mathbb{E}_f\left[\KL(f\|\hat{f}_n)\right],$$
	where $\hat{f}_n(\cdot) = \hat{f}_n(\cdot; X_1,\ldots,X_n)$ is a density estimator based on $X_1,\ldots,X_n$ drawn iid from $P$.
\end{definition}

\begin{definition}[Sequential Density Estimation Minimax Risk (Improper)]\label{def-sequential}
	For a given class $\mathcal{P}$ of distributions over $\mathcal{X}$, the sequential minimax risks $C_{H^2, n}$ and $C_{KL, n}$ are defined as: for $d\in \{H^2, \KL\}$,
	$$C_{d, n}(\mathcal{P}) = \inf_{\hat{f}_1, \cdots, \hat{f}_n}\sup_{f\in\mathcal{P}}\sum_{t=1}^N \mathbb{E}[d(f, \hat{f}_t(\cdot|X_1, \cdots, X_{t-1}))]$$
	where $\hat{f}_t: \mathcal{X}^{t-1}\to \Delta(\mathcal{X})$ denotes the density estimator at time $t$ based on observations $X_1, \cdots, X_{t-1}$.
\end{definition}

We refer to Definitions \ref{def-batch} and \ref{def-sequential} as the batch and online settings, respectively. 
The following corollary shows that the minimax density estimation risks in these settings can be characterized by the local and global Hellinger entropy up to constant factors.
Furthermore, we show that proper and improper density estimation rates coincide. As explained earlier this is well-known for Hellinger loss but far from clear for KL loss which does not satisfy triangle inequality. In fact, the celebrated Yang-Baron construction \cite{yang1999information} produces an improper density estimate. Nevertheless, we show that for Gaussian mixture class there is no gain in stepping outside the model class.

\begin{corollary}\label{cor-1}
Let $\mathcal{P}_{com}(M)$ and $\mathcal{P}_{sub}(K)$ denote the collection of $d$-dimensional Gaussian mixtures where the mixing distribution is supported on $B_2(M)$ and $K$-subgaussian, respectively, i.e.,
\begin{align*}
	\mathcal{P}_{com}(M) & = \{\pi * \mathcal{N}(0, I_d)|\mathrm{supp}(\pi)\subset B_2(M)\},\\
	\mathcal{P}_{sub}(K) & = \{\pi * \mathcal{N}(0, I_d)|\pi \text{ is } K\text{-subgaussian}\}.
\end{align*}
Then for any compact (under Hellinger) subset $\mathcal{P}$ where $\mathcal{P}\subset \mathcal{P}_{com}(M)$ or $\mathcal{P}\subset \mathcal{P}_{sub}(K)$ with $K < 1$, we have 
the following characterization on the proper or improper minimax risk:
$$ R_{H^2, n}(\mathcal{P}) \asymp R_{KL,n}(\mathcal{P}) \asymp \tilde R_{KL,n}(\mathcal{P}) \asymp \inf_{\epsilon>0} \epsilon^2 + {1\over n} \log \calN_{loc,H}(\calP, \epsilon),$$
and also for sequential minimax risk:
$$ C_{H^2, n}(\mathcal{P}) \asymp C_{KL,n}(\mathcal{P}) \asymp \inf_{\epsilon>0} n\epsilon^2 + \log \calN_{H}(\calP, \epsilon).$$
Here $\asymp$ hides constants that may depend on $M, K$, or $d$ but not on $n$.
\end{corollary}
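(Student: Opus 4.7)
The plan is to combine our KL--Hellinger comparison (Theorems~\ref{thm-KL-d} and~\ref{thm-sub-ub}), which gives $\KL \asymp H^2$ uniformly on $\calP$ with constants depending only on $M$, $K$, and $d$, with the classical entropy-based machinery for density estimation. The comparison is precisely what is needed to close the gap between the Hellinger and KL sides of each characterization and to equate proper with improper KL rates; the batch side needs Le~Cam--Birg\'e localized testing, the sequential side needs Bayes-mixture prediction, and in both cases the comparison transports Hellinger bounds to KL bounds uniformly.

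For the batch setting, start from the classical local-entropy characterization
\[
R_{H^2,n}(\calP) \asymp \inf_{\epsilon>0} \Bigl(\epsilon^2 + \tfrac{1}{n}\log \calN_{loc,H}(\calP, \epsilon)\Bigr),
\]
whose upper bound uses Birg\'e's pairwise Hellinger testing estimator localized at each scale, and whose lower bound applies Fano's inequality to an $\epsilon$-packing inside a Hellinger ball of radius $O(\epsilon)$; the mutual information along the packing is $O(n\epsilon^2)$ because our comparison forces $\KL \lesssim H^2 \lesssim \epsilon^2$ on the packing. Since $\KL \geq H^2$ always, $R_{KL,n} \geq R_{H^2,n}$; conversely, after Hellinger-projecting the Birg\'e estimator back onto $\calP$, our comparison upgrades the Hellinger risk bound to a KL risk bound of the same order, giving $R_{KL,n} \asymp R_{H^2,n}$.

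For the proper-versus-improper equivalence in KL, Hellinger compactness of $\calP$ lets us define $\hat f^{*} = \argmin_{g \in \calP} H(\hat f, g)$ for any improper Hellinger estimator $\hat f$. The Hellinger triangle inequality yields $H(f, \hat f^{*}) \leq 2 H(f, \hat f)$, hence by our comparison
\[
\KL(f \| \hat f^{*}) \lesssim H^2(f, \hat f^{*}) \leq 4 H^2(f, \hat f) \leq 4 \KL(f \| \hat f),
\]
so $\tilde R_{KL,n} \asymp R_{KL,n}$. Intuitively, the comparison turns KL into an approximate proper metric on $\calP$, so stepping outside the model class buys nothing.

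For the sequential setting, fix $\epsilon>0$ and an $\epsilon$-Hellinger covering $\{P_1, \ldots, P_N\} \subset \calP$ of size $N = \calN_H(\calP, \epsilon)$. The Bayes-mixture predictor $\hat f_t \propto \sum_i w_{t,i} P_i$ with $w_{t,i} \propto \prod_{s<t} P_i(X_s)$ satisfies the chain-rule bound $\sum_t \EE_f[\KL(f \| \hat f_t)] \leq \log N + n \KL(f \| P_{i^{*}})$; taking $P_{i^{*}}$ nearest to $f$ in Hellinger and applying our comparison yields $\lesssim \log N + n\epsilon^2$, and optimizing over $\epsilon$ gives the stated upper bound on $C_{KL,n}$, which transfers to $C_{H^2,n}$ since $H^2 \leq \KL$. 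The matching lower bound follows Haussler's mutual-information strategy: a uniform prior on a near-optimal $\epsilon$-packing makes $\sum_t \EE[\KL(f \| \hat f_t)] \geq I(\Theta; X^n)$, and this mutual information is forced to be of order $\log \calN_H(\calP, \epsilon)$ at the equalizing scale $n\epsilon^2 \asymp \log \calN_H(\calP, \epsilon)$. The main obstacle I anticipate is executing this last step sharply---balancing Fano-type and data-processing-type bounds so the packing entropy and approximation term combine at the right scale---since all other ingredients are modular assembly of classical tools, with the genuinely new input being our KL--Hellinger comparison that links the two rates at every step.
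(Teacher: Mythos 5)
Your proposal is correct and follows essentially the same route as the paper: the KL--Hellinger comparison from Theorems~\ref{thm-KL-d} and~\ref{thm-sub-ub} is combined with Hellinger projection onto $\calP$ (to equate proper and improper risks and to transfer Hellinger bounds to KL), the Le~Cam--Birg\'e/Fano local-entropy characterization for the batch rate, and the Yang--Barron mixture predictor together with Haussler--Opper's mutual-information bounds for the sequential rate. The paper executes the same modular assembly, citing \cite[Cor.~33.2]{polyanskiy2022information} and \cite[Lemmas 6--7]{haussler1997mutual} for the steps you sketch explicitly, including the final balancing step you flag as the main remaining obstacle.
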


As we mentioned previously, a recent pair of works~\cite{neykov2022minimax,jaonad2023coding}
established the same phenomenon: the sequential rate is given by global entropy, while the batch
rate is given by the local entropy, though,  their work is for a very different setting of a
Gaussian sequence model.

\par Apart from the KL divergence and Hellinger distance, we also obtained comparison results for other distances between distributions, e.g. $\chi^2$-divergence, TV and $L_2$ distances. See Appendix \ref{sec:other_dist}.





We close this section with a list of related open problems.
\begin{enumerate}

	\item \textbf{Fully dimension-free comparison:} Currently our upper bound on KL/$\chi_2$ according to Hellinger is depending on the dimension of the distributions, can we remove this dependence on the dimension? Suppose $\pi$ and $\eta$ are two $d$-dimensional distributions supported on $B_2(M)$, is there a constant $C_{KL}(M), C_{\chi_2}(M)$ such that
	\begin{align*}
		\KL(f_\pi\|f_\eta) & \le C_{KL}(M)\cdot H^2(f_\pi, f_\eta),
	\end{align*}
 which would have the best of both worlds of \prettyref{eq:HKL-kGM} and \prettyref{eq:HKL2}.
	Note that from Theorem 4.2 in~\cite{doss2020optimal} we can obtain the following dimension-free bound
	$$\KL(f_\pi\|f_\eta)\lesssim e^{Ck^2} H^2(f_\pi, f_\eta)$$
	for some constant $C$, if we assume $\pi$ and $\eta$ are $k$-atomic distributions. But this bound depending exponentially on the number of components.
 
	\item \textbf{Minimax rate for estimating Gaussian mixtures:} Find the sharp rate of  
	$$
 R_{H^2, n}(\mathcal{P}_{com}(M)) =  \inf_{\hat{f}_n}\sup_{f\in \mathcal{P}_{com}(M)}\mathbb{E}_f[H^2(\hat{f}_n, f)].$$
 Thanks to the 
comparison inequality in Theorem \ref{thm-KL-d}, 
Corollary \ref{cor-1} reduces this problem to computing the local Hellinger entropy of the mixture class $\mathcal{P}_{com}(M)$. 
 The best known estimates for this in one dimension are
$$\log(1/\epsilon)\lesssim\log\calN_{loc, H}(\mathcal{P}_{com}(M), \epsilon)\lesssim \left(\frac{\log(1/\epsilon)}{\log\log(1/\epsilon)}\right)^{3/2}.$$
Here the lower bound is from Theorem 1.3 in~\cite{kim2014minimax}, which shows that $R_{H^2, n} = \Omega\left(\log n/n\right)$; the upper bound is from~\cite{NW21} by constructing a covering of the truncated moment space of the mixing distributions. The upper bound leads to an upper bound $O\left((\log n/\log\log n)^{1.5}/n\right)$ on the minimax risk, improving the $O((\log n)^2/n)$ result of \cite{kim2014minimax}.


	\item \textbf{Linear comparison between TV and Hellinger:} 
	It is well-known that $H^2 \lesssim \TV \lesssim H$ in general. Can we show that $\TV \asymp H$ for Gaussian mixtures? Specifically, for any two $\pi$ and $\eta$ supported on $[-M, M]$, can we show that there exists some constant $C=C(M)$ such that
	$$\TV(f_\pi, f_\eta)\ge C\cdot H(f_\pi, f_\eta).$$
	
	We notice that it is impossible to lower bound the $L_2$-distance $\|f_\pi- f_\eta\|_2$ linearly in $H(f_\pi, f_\eta)$,
	because~\cite{kim2014minimax} showed that for subgaussian mixing distributions, 
	the minimax squared $L_2$ risk for estimating the mixture density is at most $O(\sqrt{\log n}/n)$ and the squared Hellinger risk is at least $\Omega(\log n/n)$.
	Thus the best comparison between $L_2$ and $H$ will involve log factors. Similarly, the
	best known comparisons for $L_2$ and $\TV$, which we derive in
	Section~\ref{sec:other_dist}, also involve log-factors. It is an open problem to find
	tight log-factors in these comparisons of $L_2, H$ and $\TV$.
\end{enumerate}

\section{Proof of Theorem \ref{thm-KL-d} in one dimension}\label{sec-1d}
\par In this section, we provide a proof of Theorem $\KL\lesssim H^2$ for one-dimensional Gaussian mixtures where the mixing distribution is compactly supported. Similar proof techniques can also be applied in multiple dimensions; see Appendix \ref{app-1} for the proof of Theorem \ref{thm-KL-d} in general dimensions.
\begin{theorem}[One-dimensional version of Theorem \ref{thm-KL-d}]\label{thm-kl}
	Let $\pi$ and $\eta$ be supported on $B_2(M)$ in $\reals$ where $M\ge 2$. Then
$$\KL(f_\pi\|f_\eta)\le 1563 M^2 H^2(f_\pi, f_\eta).$$\end{theorem}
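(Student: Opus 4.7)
The plan is to express the KL divergence as $\KL(f_\pi\|f_\eta) = \int f_\eta\,\phi\!\left(f_\pi/f_\eta\right)dx$ with the Csisz\'ar function $\phi(t) = t\log t - t + 1$, and to compare $\phi$ pointwise with the Hellinger integrand $\psi(t) = (\sqrt{t}-1)^2$. A direct calculus check shows that $\phi(t) \le C_0(1 + |\log t|)\,\psi(t)$ for a universal constant $C_0$ and all $t>0$ (near $t=1$ both sides behave like $(t-1)^2$, and for $t$ far from $1$ the extra $|\log t|$ is required to cover the asymptotic $\phi(t)\sim t\log t$). This reduces bounding $\KL$ to controlling a Hellinger integral weighted by $1 + |\log(f_\pi/f_\eta)|$.

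The next step uses the Gaussian mixture structure to control the log-likelihood ratio. Writing $f_\pi(x) = \varphi(x)\int e^{xz - z^2/2}\,\pi(dz)$ and using $|z| \le M$, the exponent $xz - z^2/2$ lies in $[-M|x| - M^2/2,\, M|x|]$, giving the pointwise bound
\[
\bigl|\log(f_\pi(x)/f_\eta(x))\bigr| \le 2M|x| + M^2.
\]
Substituting into the previous comparison and integrating,
\[
\KL(f_\pi\|f_\eta) \le C_1(1+M^2)\,H^2(f_\pi,f_\eta) + 2C_1 M\int |x|\bigl(\sqrt{f_\pi(x)} - \sqrt{f_\eta(x)}\bigr)^2dx.
\]
The remaining task is therefore to bound the weighted Hellinger integral $J := \int |x|\,(\sqrt{f_\pi}-\sqrt{f_\eta})^2\,dx$ by $O(M)\cdot H^2(f_\pi,f_\eta)$.

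A natural first attempt is to split $\RR$ into a bulk $[-T,T]$ and its tail, yielding $J \le T\,H^2 + \int_{|x|>T}|x|(f_\pi+f_\eta)\,dx$. Since $\supp \pi,\supp \eta \subset [-M,M]$, for $|x|>M$ we have $f_\pi(x),f_\eta(x) \le \varphi(|x|-M)$, so the tail decays like $e^{-(T-M)^2/2}$. Balancing the two contributions at $T \sim M + \sqrt{2\log(1/H^2)}$ produces a bound of the Wong--Shen type $\KL \lesssim (M^2 + M\sqrt{\log(1/H^2)})\,H^2$, which is essentially Theorem~\ref{thm-KL-nd} and still carries a logarithmic overhead.

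The main obstacle is therefore removing this residual $\sqrt{\log(1/H^2)}$ factor, which is the crux of the one-dimensional theorem. The cleanest remedy I would try is to replace the crude tail estimate by a sharper second-moment bound of the form $\int x^2(\sqrt{f_\pi}-\sqrt{f_\eta})^2\,dx \lesssim M^2\,H^2(f_\pi,f_\eta)$; by Cauchy--Schwarz this would yield $J \lesssim M\,H^2$ and hence $\KL \lesssim M^2 H^2$. Establishing such a second-moment inequality would in turn use a dyadic decomposition of the tail exploiting the Gaussian decay of both mixture densities, together with the fact that $(\sqrt{f_\pi}-\sqrt{f_\eta})^2$ is itself exponentially concentrated in a window of radius $O(M)$ around the origin; the key point is that its mass at distance $|x|\gg M$ is controlled \emph{coherently} with $H^2$, rather than being bounded by the absolute Gaussian tail, which is what allows the logarithmic factor to be avoided.
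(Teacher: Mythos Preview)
Your reduction is sound up through the weighted Hellinger integral: the pointwise bound $\phi(t)\le C_0(1+|\log t|)\psi(t)$ is correct (indeed $\phi/\psi$ is monotone by the paper's Lemma~\ref{lem-formula}), and the likelihood-ratio bound $|\log(f_\pi/f_\eta)|\le 2M|x|+M^2$ is immediate from the support assumption. You also correctly observe that the crude split at $|x|=T$ recovers only Theorem~\ref{thm-KL-nd}. The gap is in the final step: you assert the second-moment inequality $\int x^2(\sqrt{f_\pi}-\sqrt{f_\eta})^2\,dx\lesssim M^2 H^2$ but do not prove it, and your dyadic sketch would not prove it as stated. A dyadic shell decomposition combined with the Gaussian tail bound $f_\pi(x),f_\eta(x)\le\varphi(|x|-M)$ only yields $\int_{|x|>R}(\sqrt{f_\pi}-\sqrt{f_\eta})^2\le\int_{|x|>R}(f_\pi+f_\eta)\lesssim e^{-(R-M)^2/2}$, which is again an \emph{absolute} bound, not one proportional to $H^2$; summing $4^j$ times such shell bounds produces a quantity of order $e^{-cM^2}$, useless when $H^2\ll e^{-cM^2}$. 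This is exactly the same obstruction that forced the $\sqrt{\log(1/H^2)}$ factor in your first attempt, so the dyadic refinement buys nothing. The phrase ``controlled coherently with $H^2$'' names the difficulty without resolving it.

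The paper supplies the missing idea, and it is not a tail bound on $J$ at all but rather a \emph{lower} bound on $H^2$. Define $r$ to be the first point in $[0,\infty)$ where $\log\frac{f_\pi(r)}{f_\eta(r)}=8M^2$ (so $r\ge 2M$). Two facts are then used: (i) on $[r,\infty)$ the KL integrand is controlled by $p(r)$ times a Gaussian tail (Lemma~\ref{lem-1}) together with the score bound of Lemma~\ref{lem-2}, giving $\int_r^\infty\!\big(f_\pi\log\frac{f_\pi}{f_\eta}-f_\pi+f_\eta\big)\lesssim \frac{M^2\,f_\pi(r)}{r-M}$; and (ii) on the short interval $[r-\frac{M^2}{r+M},\,r]$ the score bound forces $\log\frac{f_\pi}{f_\eta}\ge M^2$ and monotonicity gives $f_\pi\ge f_\pi(r)$, so that interval alone contributes $H^2\gtrsim \frac{M^2\,f_\pi(r)}{r+M}\asymp \frac{f_\pi(r)}{r-M}$. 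Comparing (i) and (ii) kills the tail by $O(M^2)H^2$ with no logarithm. On the remaining bulk $\{|x|\le r\}$ one has $\log(f_\pi/f_\eta)\le 8M^2$, and Lemma~\ref{lem-formula} gives directly $\phi(t)\le 9M^2\psi(t)$ there---so the $|x|$-weighted integral $J$ is never needed. The conceptual point you are missing is that the right way to remove the log is not to sharpen the tail \emph{upper} bound, but to localize near the level set $\{\log(f_\pi/f_\eta)=8M^2\}$ and harvest a matching \emph{lower} bound on $H^2$.
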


\par For simplicity we abbreviate $f_\pi(\cdot)$ and $f_\eta(\cdot)$ as $p(\cdot), q(\cdot)$. Then we have
$$\KL(p\|q) = \int_{-\infty}^\infty q(x)\cdot\frac{p(x)}{q(x)}\log\frac{p(x)}{q(x)}dx, \quad H^2(p, q) = \int_{-\infty}^\infty q(x)\cdot\left(\sqrt{\frac{p(x)}{q(x)}} - 1\right)^2dx.$$
We first state several lemmas. The first is a straightforward computation.
\begin{lemma}\label{lem-1}
	Let $p = \pi * \mathcal{N}(0, 1)$ where $\supp(\pi)\subset [-M, M]$. Then we have $\forall y\ge r\ge x\ge M$, 
	$$p(y)\exp\left(\frac{(y-M)^2 - (r-M)^2}{2}\right)\le p(r)\le p(x).$$
\end{lemma}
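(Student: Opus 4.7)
The plan is to establish both inequalities pointwise in the mixing variable $z$ and then integrate against $\pi$. Write
$$p(t) = \int_{-M}^{M} \varphi(t-z)\,\pi(dz), \qquad \varphi(u) = \frac{1}{\sqrt{2\pi}}e^{-u^2/2}.$$

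For the right-hand inequality $p(r)\le p(x)$, I would observe that once $t\ge M$, the argument $t-z$ is nonnegative uniformly over $z\in[-M,M]$, so the map $t\mapsto \varphi(t-z)$ is monotone decreasing on $[M,\infty)$. In particular $\varphi(r-z)\le \varphi(x-z)$ for every $z$ in the support of $\pi$, and integrating against $\pi$ gives $p(r)\le p(x)$.

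For the left-hand inequality, I would control the pointwise ratio. Writing
$$\frac{\varphi(y-z)}{\varphi(r-z)} = \exp\!\left(\tfrac{1}{2}\bigl((r-z)^2-(y-z)^2\bigr)\right) = \exp\!\left(\tfrac{1}{2}(r-y)(r+y-2z)\right),$$
one sees this is an exponential of a linear function of $z$ whose slope $-(r-y)$ is nonnegative since $y\ge r$. Hence it is maximized over $z\in[-M,M]$ at the right endpoint $z=M$, yielding the pointwise bound
$$\varphi(y-z)\le \varphi(r-z)\,\exp\!\left(-\tfrac{1}{2}\bigl((y-M)^2-(r-M)^2\bigr)\right).$$
Integrating both sides against $\pi(dz)$ gives $p(y)\le p(r)\exp\!\left(-\tfrac{1}{2}((y-M)^2-(r-M)^2)\right)$, which rearranges to the claim.

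I do not anticipate a genuine obstacle here. The lemma is the quantitative statement that to the right of the support of $\pi$, the mixture density decays at least as fast as a single Gaussian centered at the rightmost allowable location $z=M$, and both halves of the argument reduce to a one-line monotonicity check applied inside the convolution integral. The only thing to track carefully is the sign of the slope of $(r-y)(r+y-2z)$ in $z$, which is precisely where the hypothesis $y\ge r$ enters; the hypothesis $x\ge M$ enters only to guarantee that the Gaussian bump is already past its peak for every admissible $z$.
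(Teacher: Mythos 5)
Your proof is correct and follows essentially the same route as the paper's (given there for the $d$-dimensional version, Lemma~\ref{lem-eq}): both reduce each inequality to a pointwise comparison of the Gaussian kernel $\varphi(\cdot-z)$ uniformly over $z$ in the support and then integrate against $\pi$, with the paper phrasing your endpoint-maximization step as the equivalent inequality $(y-z)^2-(r-z)^2\ge(y-M)^2-(r-M)^2$ for $|z|\le M\le r\le y$. No gaps.
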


The following result bounds the grownth of the ``score function'' in the Gaussian mixture model.
\begin{lemma}\label{lem-2}
Let $p = \pi * \mathcal{N}(0, 1)$ where $\supp(\pi)\subset [-M, M]$. Then
	$$|\nabla \log p(r)|\le 3|r| + 4M, \quad \forall r\in\RR.$$
\end{lemma}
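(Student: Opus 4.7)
The plan is to derive the score $\nabla\log p$ via a Bayesian posterior representation (Tweedie's formula), which in fact produces a bound strictly stronger than, and therefore implies, the claimed inequality. Starting from
$$p(r) = \int_{-M}^{M} \varphi(r-z)\,\pi(dz)$$
and using the identity $\varphi'(u) = -u\,\varphi(u)$, I would differentiate under the integral sign (justified by dominated convergence via the super-exponential decay of $\varphi$) to obtain
$$p'(r) = -\int_{-M}^{M} (r-z)\,\varphi(r-z)\,\pi(dz).$$

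Dividing by $p(r)$ then yields
$$\nabla\log p(r) \;=\; \frac{p'(r)}{p(r)} \;=\; \mathbb{E}_{\pi_r}[Z] - r, \qquad \pi_r(dz) \triangleq \frac{\varphi(r-z)\,\pi(dz)}{p(r)}.$$
Here $\pi_r$ is precisely the Bayesian posterior of $Z$ given the observation $r$ in the location model $X = Z + N$ with $Z\sim\pi$ and $N\sim\mathcal{N}(0,1)$; this is the one-dimensional Tweedie/Brown identity specialized to a Gaussian location mixture.

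The conclusion is then immediate. Since $\supp(\pi)\subset[-M,M]$ and $\pi_r$ is absolutely continuous with respect to $\pi$, the posterior $\pi_r$ is supported on $[-M,M]$ as well, so $|\mathbb{E}_{\pi_r}[Z]|\le M$. The triangle inequality therefore gives
$$|\nabla\log p(r)| \;=\; |\mathbb{E}_{\pi_r}[Z] - r| \;\le\; |r| + M,$$
which is comfortably stronger than the stated bound $3|r|+4M$. I do not foresee a real obstacle: the only technical nuance is justifying differentiation under the integral, which is routine from the Gaussian tail and the compactness of $\supp(\pi)$. The extra slack in the constants in the lemma statement suggests the authors only needed a rough bound of this form for downstream use, and any derivation along these lines will suffice.
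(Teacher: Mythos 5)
Your proof is correct, and it takes a genuinely different (and more self-contained) route than the paper. The paper simply invokes Proposition~2 of \cite{polyanskiy2016wasserstein}, which gives $|\nabla \log p(r)|\le 3|r|+4|\EE[X]|$ for a general mixing distribution $\pi$ (with finite mean), and then bounds $|\EE[X]|\le M$; that external result is stated in a generality where the posterior mean cannot be controlled by the support radius, which is why its constants are looser. Your Tweedie-type computation $\nabla\log p(r)=\EE_{\pi_r}[Z]-r$ with the posterior $\pi_r(dz)=\varphi(r-z)\pi(dz)/p(r)$ exploits the compact support directly: since $\pi_r\ll\pi$, the posterior mean lies in $[-M,M]$, giving the strictly stronger bound $|r|+M$. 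All the steps check out --- $p(r)>0$ everywhere so the division is legitimate, and differentiation under the integral is immediate from the compact support of $\pi$ together with the boundedness of $(r-z)\varphi(r-z)$ on the relevant sets. What your approach buys is a one-line, dependency-free argument with a better constant (which would only improve the downstream constants in Theorem~\ref{thm-kl}); what the paper's citation buys is uniformity with the multi-dimensional version (Lemma~\ref{lem-eq2}) and with the subgaussian case (Lemma~\ref{lem-sub-eq2}), where the same external proposition is reused and the posterior mean is no longer trivially bounded.
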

\begin{proof}
	By~\cite[Proposition 2]{polyanskiy2016wasserstein}, we have for all $r\in\RR$, $|\nabla \log p(r)|\le 3|r| + 4|\EE[X]|$, 	
	where $X\sim \pi$. Since $\pi$ is on $[-M, M]$, we have $|\EE[X]|\le M$.
\end{proof}
\begin{lemma}\label{lem-formula}
	For every $0\le t\le \exp\left(8M^2\right)$ with $M\ge 1$, we have
	$$t\log t - t + 1\le 9M^2\left(\sqrt{t} - 1\right)^2.$$
\end{lemma}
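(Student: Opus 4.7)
The plan is to treat this as a one-variable calculus problem. Define
$$G(t) = 9M^2(\sqrt t - 1)^2 - (t\log t - t + 1),$$
so the claim is that $G(t) \ge 0$ for $t \in [0, e^{8M^2}]$.

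First I would record the behavior at the distinguished points. At the left endpoint, $G(0) = 9M^2 - 1 \ge 8 > 0$. At $t = 1$ one has $G(1) = 0$, $G'(1) = 0$, and $G''(1) = \frac{9M^2}{2} - 1 > 0$ since $M \ge 1$, so $t = 1$ is a strict local minimum of $G$ at which $G$ vanishes. It then remains to pin down the global shape of $G$ so that no other point in the interval can take $G$ below zero.

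Next I would shape-classify $G$ via its second derivative $G''(t) = \frac{9M^2}{2\, t^{3/2}} - \frac{1}{t}$, which has a unique zero at $t_0 = 81M^4/4 > 1$, is positive on $(0,t_0)$, and negative on $(t_0, \infty)$. Combined with the limits $G'(t) \to -\infty$ as $t \to 0^+$ (where the $-9M^2/\sqrt{t}$ term dominates $-\log t$) and $G'(t) \to -\infty$ as $t \to \infty$, this forces $G'$ to have exactly two zeros in $(0,\infty)$: the known one at $t = 1$, and a unique $t_\ast > t_0$. In particular, $G' < 0$ on $(0,1)$, $G' > 0$ on $(1, t_\ast)$, and $G' < 0$ on $(t_\ast, \infty)$, so $G$ is decreasing on $[0,1]$, increasing on $[1, t_\ast]$, and decreasing on $[t_\ast, \infty)$.

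Therefore the minimum of $G$ over $[0, e^{8M^2}]$ is attained either at $t = 1$ (where $G = 0$) or at the right endpoint $t = e^{8M^2}$. Expanding $(e^{4M^2} - 1)^2 = e^{8M^2} - 2 e^{4M^2} + 1$ gives
$$G(e^{8M^2}) = (M^2 + 1)\, e^{8M^2} - 18 M^2 e^{4M^2} + (9M^2 - 1),$$
and the inequality $(M^2 + 1)\,e^{4M^2} \ge 18 M^2$ holds with a large margin for $M \ge 1$ (already $2e^4 > 100$ at $M = 1$), yielding $G(e^{8M^2}) > 0$. There is no real obstacle here: the lemma is a numerical calibration where the constants $9$ and $8$ are picked precisely so that the endpoint inequality closes, and the whole proof reduces to sign analysis of $G''$ together with one endpoint estimate.
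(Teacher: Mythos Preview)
Your argument is correct but takes a different route from the paper. The paper instead considers the ratio $g(t) = \dfrac{t\log t - t + 1}{(\sqrt{t}-1)^2}$ and shows it is nondecreasing on $(0,\infty)$: its derivative has numerator $h(t) = t - 1 - \sqrt{t}\log t$, for which $h'(t) = \dfrac{\sqrt{t} - 1 - \log\sqrt{t}}{\sqrt{t}} \ge 0$, so $h$ changes sign only at $t=1$ in the same way as $(\sqrt{t}-1)^3$, giving $g' \ge 0$ everywhere. Hence $g(t) \le g(e^{8M^2}) = \dfrac{8M^2}{(1 - e^{-4M^2})^2} \le 9M^2$ for $M \ge 1$, which is the claim. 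Your approach via the difference $G(t)$ and a second-derivative shape analysis is somewhat more labor-intensive but equally valid; the ratio trick buys a one-line monotonicity argument with a single endpoint evaluation, whereas your method trades that slickness for a mechanical critical-point count together with an endpoint estimate that, as you note, closes comfortably.
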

\begin{proof}
	We define
	$$g(t)\triangleq \frac{t\log t - t + 1}{(\sqrt{t}-1)^2}.$$
	Then we have $g'(t) = \frac{t-1 - \sqrt{t}\log t}{\sqrt{t}(\sqrt{t} - 1)^3}$. The numerator $h(t) = t - 1 - \sqrt{t}\log t$ within satisfies that $h'(t) = 1 - \frac{\log t}{2\sqrt{t}} - \frac{1}{\sqrt{t}} = \frac{\sqrt{t} - 1 - \log\sqrt{t}}{\sqrt{t}}\ge 0$. Hence for $0\le t\le 1$ we have $h(t)\le h(1) = 0$ and for $t\ge 1$ we have $h(t)\ge h(1) = 0$. Therefore, we have $g'(t) = \frac{h(t)}{\sqrt{t}(\sqrt{t} - 1)^3}\ge 0$ for all $t\ge 0$, which indicates that $g$ is non-decreasing on $t\ge 0$. Hence for $0\le t\le \exp\left(8M^2\right)$ and $M\ge 1$, we have
	$$g(t)\le g(\exp(8M^2))\le \frac{\exp(8M^2)\cdot 8M^2}{(\sqrt{\exp(8M^2)} - 1)^2} = \frac{8M^2}{(1 - \exp(-4M^2))^2}\le 9M^2,$$
	which indicates that
	$$t\log t - t + 1\le 9M^2\left(\sqrt{t} - 1\right)^2$$
\end{proof}
\begin{proof}[Proof of Theorem \ref{thm-kl}] It is easy to see that for every $x\in [-2M, 2M]$, we have
	$$\frac{1}{\sqrt{2\pi}}\exp\left(-8M^2\right)\le p(x), q(x)\le \frac{1}{\sqrt{2\pi}}.$$
	Let $r$ be the smallest positive number (possibly infinite) such that $\log \frac{p(r)}{q(r)}\ge 8M^2$, then we have $r\ge 2M$. Without loss of generality we assume $r < \infty$. (Otherwise $\int_r^\infty p(x)\log\frac{p(x)}{q(x)} - p(x) + q(x)dx = 0$ and there is nothing to prove.) Since $\log\frac{p(\cdot)}{q(\cdot)}$ is a continuous function, we have $\log\frac{p(r)}{q(r)} = 8M^2$. 
 According to Lemma \ref{lem-1}, we have for every $x\ge r$,
	$$p(x)\le p(r)\exp\left(-\frac{(x-M)^2 - (r-M)^2}{2}\right), \quad q(x)\le q(r)\exp\left(-\frac{(x-M)^2 - (r-M)^2}{2}\right)$$
	and according to Lemma \ref{lem-2} we have
	$$\left|\log\frac{p(x)}{q(x)}\right|\le \left|\log\frac{p(r)}{q(r)}\right| + \int_r^x (3|t|+4M)dt = 8M^2 + (x-r)(3x + 3r + 8M), \quad \forall x\ge r\ge 0.$$
	Therefore, we obtain that
	\begin{align*}
		&\quad \int_r^\infty p(x)\log\frac{p(x)}{q(x)} - p(x) + q(x)dx\le \int_r^\infty p(x)\log\frac{p(x)}{q(x)} + q(x)dx\\
		& \le p(r)\exp\left(\frac{(r-M)^2}{2}\right)\int_r^\infty (8M^2 + (x-r)(3x + 3r + 8M))\exp\left(-\frac{(x-M)^2}{2}\right)dx\\
		&\quad + q(r)\exp\left(\frac{(r-M)^2}{2}\right)\int_r^\infty \exp\left(-\frac{(x-M)^2}{2}\right)dx\\
		&\le p(r)\cdot\left(\frac{6}{(r-M)^3} + \frac{6r + 8M}{(r-M)^2} + \frac{8M^2}{r-M}\right) + \frac{q(r)}{r-M}\le \frac{36M^2}{r-M}p(r) + \frac{q(r)}{r-M}\\
		& \le \frac{37M^2}{r-M}p(r),
	\end{align*}
	where the last inequality uses the fact $M\ge 1$ and $\log \frac{p(r)}{q(r)} = 8M^2\ge 0$ hence $p(r)\ge q(r)$.
	
	\par Moreover, according to Lemma \ref{lem-2} we also notice that for $0\le x\le r$ we have $\left|\nabla \log\frac{p(x)}{q(x)}\right|\le 6x + 8M\le 6r + 8M$. 
 Hence noticing that $\log\frac{p(r)}{q(r)} = 8M^2$ and also $r\ge 2M$, we have for every $r - \frac{M^2}{r + M}\le x\le r$, 
	$$\log\frac{p(x)}{q(x)}\ge 8M^2 - \frac{M^2}{r+M}\cdot (6r + 8M)\ge M^2$$
	and also $p(x)\ge p(r)$ according to Lemma \ref{lem-1}. Therefore, noticing $M\ge 1$, we have
	$$H^2(p, q)\ge \int_{r - \frac{M^2}{r + M}}^rp(x)\cdot \left(\sqrt{\frac{q(x)}{p(x)}} - 1\right)^2dx\ge \frac{p(r)M^2}{(r+M)}\cdot \left(1 - \frac{1}{e^{M^2/2}}\right)^2\ge \frac{p(r)M^2}{7(r+M)}.$$
	Since $r\ge 2M$, we obtain that
	$$\int_r^\infty p(x)\log\frac{p(x)}{q(x)} - p(x) + q(x)dx\le 777 H^2(p, q).$$
	Similarly, if we let $s$ to be the largest negative number (possibly negative infinite) such that $\log \frac{p(s)}{q(s)}\ge 8M^2$, then we will also have
	$$\int_{-\infty}^s p(x)\log\frac{p(x)}{q(x)} - p(x) + q(x)dx\le 777H^2(p, q).$$
	\par Next we consider those $s\le x\le r$. For those $x$ we have $\log\frac{p(x)}{q(x)}\le  8M^2$. Hence according to Lemma \ref{lem-formula}, we have
	$$\frac{p(x)}{q(x)}\log\frac{p(x)}{q(x)} - \frac{p(x)}{q(x)} + 1\le 9M^2\left(\sqrt{\frac{p(x)}{q(x)}} - 1\right)^2.$$ 
	Therefore,
	\begin{align*}
	&\quad \int_s^r p(x)\log\frac{p(x)}{q(x)} - p(x) + q(x)dx = \int_s^r q(x)\cdot\left(\frac{p(x)}{q(x)}\log\frac{p(x)}{q(x)} - \frac{p(x)}{q(x)} + 1\right)dx\\
	& \leq 9M^2\int_s^r q(x)\cdot\left(\sqrt{\frac{p(x)}{q(x)}} - 1\right)^2dx\le 9M^2H^2(p, q).
	\end{align*}
	\par Overall, we have shown that
	$$\KL(p\|q)\le (777 + 777 + 9M^2)H^2(p, q)\le 1563M^2H^2(p, q),$$
	which finishes the proof of Theorem \ref{thm-kl}.
\end{proof}

\section{Proof of Theorem \ref{thm-KL-nd}} \label{sec-2}
\par First of all, we notice that \cite[Lemma 5]{haussler1997mutual} shows that for any $\delta, \lambda > 1$ such that
\begin{equation}\label{eq-condition}
0<\delta<\exp(-1/2)\quad\text{and}\quad\log\log(1/\delta)/\log(1/\delta)\le (\lambda - 1)/2,
\end{equation}
and any probability measures $\PP, \QQ, \SSS$ and $\QQ' = (1-\delta)\QQ + \delta \SSS$, we have
\begin{equation}
\KL(\PP\|\QQ')\le \frac{2\log(1/\delta)}{(1-\delta)^2}H^2(\PP, \QQ) + \frac{4\delta\log(1/\delta)}{(1-\delta)^2} + \delta^{\frac{\lambda - 1}{2}}\cdot\int_{\RR^d}\frac{(d\PP)^\lambda}{(d\SSS)^{\lambda - 1}}.
\label{eq:HO}
\end{equation}
Let $\lambda = 3$, then as long as $0 < \delta < 1/2$, \eqref{eq-condition} holds. Choose, $\PP = f_\pi$ and $\SSS = \QQ = f_\eta$, we get
\begin{equation}\label{eq-ho}\KL(f_\pi\|f_\eta)\le \frac{2\log(1/\delta)}{(1-\delta)^2}H^2(f_\pi, f_\eta) + \frac{4\delta\log(1/\delta)}{(1-\delta)^2} + \delta\cdot\int_{\RR^d}\frac{f_\pi(\mx)^3}{f_\eta(\mx)^2}d\mx.\end{equation}
Notice that the last term is an $f$-divergence with $f = x^3$, which is a convex function, hence $\int_{\RR^d}\frac{f_\pi(\mx)^3}{f_\eta(\mx)^2}d\mx$ is convex in $(f_\pi, f_\eta)$. Define the set $\mathscr{P}(M)$ 
of Gaussian mixtures with mixing distributions supported on the ball $B_2(M)$:
$$\mathscr{P}(M) = \{\pi * \mathcal{N}(0, I_d):\supp(\pi)\subset B_2(M)\}$$
which is a convex set. Hence the maximum value of $\int_{\RR^d}\frac{f_\pi(\mx)^3}{f_\eta(\mx)^2}d\mx$ where $f_\pi, f_\eta\in \mathscr{P}(M)$ is attained when $f_\pi, f_\eta$ are both at the boundary of $\mathscr{P}(M)$, i.e. $\exists \mathbf{u}, \mathbf{v}$ with $\|\mathbf{u}\|_2, \|\mathbf{v}\|_2\le M$ and we have $f_\pi = \delta_\mathbf{u} * \mathcal{N}(0, I_d), f_\eta = \delta_\mathbf{v} * \mathcal{N}(0, I_d)$. (This is because any $f_\pi\in\mathcal{P}(M)$ can be written as $\int_{B_2(M)}\pi(\mmu)f_{\delta_{\mmu}}d\mmu$.)
Therefore, we have
\begin{align*}
	\int_{\RR^d}\frac{f_\pi(\mx)^3}{f_\eta(\mx)^2}d\mx & \le \sup_{\mathbf{u}, \mathbf{v}: \|\mathbf{u}\|,  \|\mathbf{v}\|_2 \le M}\int_{\RR^d}\frac{\left(\frac{1}{\sqrt{2\pi}^d}\exp\left(-\frac{\|\mx + \mathbf{u}\|_2^2}{2}\right)\right)^3}{\left(\frac{1}{\sqrt{2\pi}^d}\exp\left(-\frac{\|\mx + \mathbf{v}\|_2^2}{2}\right)\right)^2}d\mx\\
	& = \sup_{\mathbf{u}, \mathbf{v}: \|\mathbf{u}\|,  \|\mathbf{v}\|_2 \le M}\frac{1}{\sqrt{2\pi}^d}\int_{\RR^d}\exp\left(-\frac{1}{2}\left(\mx^T\mx + 6\mx^T\mathbf{u} - 4\mx^T\mathbf{v} + 3\mathbf{u}^T\mathbf{u} - 2\mathbf{v}^T\mathbf{v}\right)\right)d\mx\\
	& = \sup_{\mathbf{u}, \mathbf{v}: \|\mathbf{u}\|,  \|\mathbf{v}\|_2 \le M}\frac{1}{\sqrt{2\pi}^d}\exp\left(3\|\mathbf{u} - \mathbf{v}\|_2^2\right)\cdot\int_{\RR^d}\exp\left(-\frac{\|\mx + 3\mathbf{u} - 2\mathbf{v}\|_2^2}{2}\right)d\mx\\
	& = \sup_{\mathbf{u}, \mathbf{v}: \|\mathbf{u}\|,  \|\mathbf{v}\|_2\le M}\exp\left(3\|\mathbf{u} - \mathbf{v}\|_2^2\right) = \exp\left(12M^2\right).
\end{align*}
Therefore, according to \eqref{eq-ho}, we have for any $\delta\in [0, 1/2]$,
$$\KL(f_\pi\|f_\eta)\le \frac{2\log(1/\delta)}{(1-\delta)^2}H^2(f_\pi, f_\eta) + \frac{4\delta\log(1/\delta)}{(1-\delta)^2} + \exp(12M^2)\delta.$$
Choosing $\delta = \exp(-12M^2)H^2(f_\pi, f_\eta)\in [0, 1/2]$ and noticing that $(1 - \delta)^2\ge \frac{1}{2}$, we get
\begin{align*}
	\KL(f_\pi\|f_\eta) &\le H^2(f_\pi, f_\eta) + 96M^2H^2(f_\pi, f_\eta) + 16H^2(f_\pi, f_\eta)\log\frac{1}{H^2(f_\pi, f_\eta)}\\
	&\le 97M^2H^2(f_\pi, f_\eta) +  16H^2(f_\pi, f_\eta)\log\frac{1}{H^2(f_\pi, f_\eta)}.
\end{align*}
This finishes the proof of Theorem \ref{thm-KL-nd}.

\section{Proof of Corollary \ref{cor-1}}\label{sec-proof-cor1}

For convenience, denote by $\tilde{R}_{H^2, n}$ the minimax squared Hellinger risk for improper density estimation, similar to $\tilde{R}_{KL, n}$. 
First, notice that for $\mathcal{P}\subset \mathcal{P}_{com}(M, d)$ or $\mathcal{P}\subset \mathcal{P}_{sub}(K, d)$
$$R_{H^2, n}(\mathcal{P})\le \tilde{R}_{H^2, n}(\mathcal{P}), \quad R_{KL, n}(\mathcal{P})\le \tilde{R}_{KL, n}(\mathcal{P}),$$
and also
$$R_{H^2, n}(\mathcal{P})\lesssim R_{KL, n}(\mathcal{P})$$
since $H^2(\PP, \QQ)\leq \KL(\PP\|\QQ)$ holds for all distributions $\PP$ and $\QQ$. Moreover, according to Theorems \ref{thm-KL-d} and \ref{thm-sub-ub}, for $\PP, \QQ\in\mathcal{P}$, we have $\KL(\PP\|\QQ)\lesssim H^2(\PP, \QQ)$, which indicates that
$$\tilde{R}_{KL, n}(\mathcal{P})\lesssim \tilde{R}_{H^2, n}(\mathcal{P}).$$
Therefore, we have
$$R_{H^2, n}(\mathcal{P})\lesssim R_{KL, n}(\mathcal{P})\le \tilde{R}_{KL, n}(\mathcal{P})\lesssim \tilde{R}_{H^2, n}(\mathcal{P}).$$
Next, we notice that
$$R_{H^2, n}(\mathcal{P}) =  \inf_{\hat{f}_n}\sup_{f\in\mathcal{P}}\mathbb{E}_f\left[H^2(\hat{f}_n, f)\right].$$
For one estimator $\hat{f}_n$, suppose $\tilde{f}_n$ is the projection of $\hat{f}_n$ into $\mathcal{P}$ under Hellinger distance (since $\mathcal{P}$ is convex, such projection always exists). Then for every $f\in \mathcal{P}$, we have
$$H(\tilde{f}_n, f)\le H(\tilde{f}, \hat{f}) + H(\hat{f}, f)\le 2H(\hat{f}, f),$$
where the last inequality uses the fact that $H(\tilde{f}, \hat{f})\le H(\hat{f}, f)$ due to projection. Here $\tilde{f}$ is a proper estimator. Therefore, we have
$$R_{H^2, n}(\mathcal{P}) =  \inf_{\hat{f}_n}\sup_{f\in\mathcal{P}}\mathbb{E}_f\left[H^2(\hat{f}_n, f)\right]\ge \frac{1}{4}\inf_{\hat{f}_n\in \mathcal{P}}\sup_{f\in\mathcal{P}}\mathbb{E}_f\left[H^2(\hat{f}_n, f)\right] = \frac{1}{4}\tilde{R}_{H^2, n}(\mathcal{P}).$$
Hence we have proved that
$$R_{H^2, n}(\mathcal{P})\lesssim R_{KL, n}(\mathcal{P})\le \tilde{R}_{KL, n}(\mathcal{P})\lesssim \tilde{R}_{H^2, n}(\mathcal{P})\lesssim R_{H^2, n}(\mathcal{P}),$$
so we have
$$R_{H^2, n}(\mathcal{P})\asymp R_{KL, n}(\mathcal{P})\asymp \tilde{R}_{KL, n}(\mathcal{P})\asymp \tilde{R}_{H^2, n}(\mathcal{P}).$$
Similarly, for sequential density estimation minimax risks, we can also show that
$$C_{H^2, n}(\mathcal{P})\asymp C_{KL, n}(\mathcal{P})\asymp \tilde{C}_{KL, n}(\mathcal{P})\asymp \tilde{C}_{H^2, n}(\mathcal{P}),$$
where $\tilde{C}_{KL, n}(\mathcal{P}), \tilde{C}_{H^2, n}(\mathcal{P})$ are the proper sequential density estimation minimax risks (where we restrict $\hat{f}_1, \cdots, \hat{f}_n$ to be in the class $\mathcal{P}$ in Definition \ref{def-sequential}. 
Therefore, to prove Corollary \ref{cor-1}, we only need to show:
\begin{align*}
	R_{H^2, n} &\asymp \inf_{\epsilon > 0} \epsilon^2 + \frac{1}{n}\log\mathcal{N}_{loc, H}(\mathcal{P}, \epsilon),\\
	C_{KL, n} &\asymp \inf_{\epsilon > 0} n\epsilon^2 + \log\mathcal{N}_{H}(\mathcal{P}, \epsilon).
\end{align*}
\par For the first inequality above, the upper bound part follows directly from the celebrated Le Cam-Birg\'{e} construction~\cite{lecam1973convergence, birge1983approximation, birge1986estimating}. The lower bound follows from applying Fano's inequality to a local Hellinger ball and the fact that 
$\KL(\PP\|\QQ)\asymp H^2(\PP, \QQ)$ for $\PP, \QQ\in\mathcal{P}$; see Corollary 33.2 in~\cite{polyanskiy2022information}.
\par The second inequality (on $C_{KL, n}$) follows directly from Lemma 6 and Lemma 7 in~\cite{haussler1997mutual} after noticing that the coefficient $b(\epsilon)$ in Lemma 7 of~\cite{haussler1997mutual} satisfies that 
$$b(\epsilon) = \sup\left\{\frac{\KL(\PP\|\QQ)}{H^2(\PP, \QQ)}: \PP, \QQ\in \mathcal{P}, H^2(\PP, \QQ)\le \epsilon\right\}\lesssim 1.$$

\acks{ZJ and YP were supported in part by the MIT-IBM Watson AI Lab and by the National Science
    Foundation under Grant No CCF-2131115. YW is supported in part by the NSF Grant CCF-1900507, an NSF CAREER award CCF-1651588, and an Alfred Sloan fellowship. The authors thank the anonymous referees for pointing out \cite[Theroem 5]{wong1995probability} and other helpful comments.}

\bibliography{reference}


\newpage
\appendix

\section{Proof of Theorem \ref{thm-KL-d}}\label{app-1}

\par Without loss of generality, we assume $d\le M^2$ (otherwise we use $\sqrt{d}\ge M$ to replace $M$, and since $\supp(\pi), \supp(\eta)\subset B_2(M)\subset B_2(\sqrt{d})$, the results still hold). For simplicity we abbreviate $f_\pi(\cdot)$ and $f_\eta(\cdot)$ as $p(\cdot), q(\cdot)$. Then we can write
\begin{equation}\label{eq-kl-hellinger}
\KL(p\|q) = \int_{\mathbb{R}^d} p(\mx)\log\frac{p(\mx)}{q(\mx)}d\mx, \quad H^2(p, q) = \int_{\mathbb{R}^d} p(\mx)\cdot\left(\sqrt{\frac{q(\mx)}{p(\mx)}} - 1\right)^2d\mx.
\end{equation}
\par Denoting by $\Omega$  the unit sphere in $\RR^d$, 
each $\mx$ in $\mathbb{R}^d$ can be written as $\mx(r, \omega)$, with $r = \|\mx\|_2$ and $\omega$ to be the vector parallel to $\mx$ in $\Omega$.  

\par To prove Theorem \ref{thm-KL-d}, we need the following lemmas.
\begin{lemma}\label{lem-eq}
	Suppose $p = \pi * \mathcal{N}(0, I_d)$, where $\supp(\pi)\subset B_2(M)$. Then for any $\omega\in \Omega$, we have:
	\begin{enumerate}
		\item $\forall r'\in [M, r]$, we have $p(\mx(r', \omega))\ge p(\mx(r, \omega))$.
		\item $\forall r'\ge r\ge M$, we have $p(\mx(r', \omega))\le p(\mx(r, \omega))\exp\left(-\frac{(r'-M)^2 - (r-M)^2}{2}\right)$
	\end{enumerate}
\end{lemma}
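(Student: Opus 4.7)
The plan is to establish both inequalities pointwise for the Gaussian kernel $\varphi(\mx-z)$ with $z\in B_2(M)$, and then integrate against $\pi(dz)$; since the bounds in the conclusion do not depend on $z$, this immediately gives the statements for $p=\pi*\varphi$. This mirrors exactly the one-dimensional argument in Lemma~\ref{lem-1}, the only change being the parametrization of points on the ray $\{r\omega:r\ge 0\}$ in $\mathbb{R}^d$.

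For both parts I would start from the identity
$$\|r\omega-z\|_2^2 \;=\; r^2 - 2r\langle\omega,z\rangle + \|z\|_2^2,$$
which yields
$$\|r'\omega-z\|_2^2 - \|r\omega-z\|_2^2 \;=\; (r'^2-r^2) - 2(r'-r)\langle\omega,z\rangle.$$
Since $\|z\|_2\le M$ by the support hypothesis, Cauchy--Schwarz gives $\langle\omega,z\rangle\le M$.

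For part~1, I would differentiate $\varphi(r'\omega-z)$ in $r'$: the derivative is proportional to $\bigl(-r'+\langle\omega,z\rangle\bigr)\varphi(r'\omega-z)$, which is $\le 0$ whenever $r'\ge M\ge \langle\omega,z\rangle$. Hence $r'\mapsto \varphi(r'\omega-z)$ is nonincreasing on $[M,\infty)$ for every $z\in B_2(M)$, and integrating against $\pi(dz)$ proves $p(\mx(r',\omega))\ge p(\mx(r,\omega))$ for $M\le r'\le r$.

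For part~2, using $r'\ge r\ge M$ so that $r'-r\ge 0$ and $r'+r-2M\ge 0$, the pointwise bound above together with $\langle\omega,z\rangle\le M$ gives
$$\|r'\omega-z\|_2^2-\|r\omega-z\|_2^2 \;\ge\; (r'^2-r^2)-2(r'-r)M \;=\; (r'-M)^2-(r-M)^2,$$
where the last equality is the factorization $(r'-M)^2-(r-M)^2=(r'-r)(r'+r-2M)$. Consequently
$$\varphi(\mx(r',\omega)-z)\;\le\;\varphi(\mx(r,\omega)-z)\,\exp\!\left(-\frac{(r'-M)^2-(r-M)^2}{2}\right),$$
and integrating this inequality against $\pi(dz)$ yields the claim. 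There is no real obstacle here; the only subtlety is checking the sign conditions that allow us to drop the dependence on $\langle\omega,z\rangle$ in the exponent, which is precisely where the assumptions $r'\ge r\ge M$ (part~2) and $r'\in[M,r]$ (part~1) enter.
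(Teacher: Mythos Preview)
Your proof is correct and follows essentially the same route as the paper's: reduce to a pointwise inequality for the Gaussian kernel $\varphi(r\omega - z)$ with $z\in B_2(M)$ (the paper does this after choosing coordinates so that $\omega=(1,0,\dots,0)$ and writing $\langle\omega,z\rangle=u_1$, while you keep the inner-product notation), then integrate against $\pi$. The only cosmetic difference is that for part~1 the paper compares $|r-u_1|$ and $|r'-u_1|$ directly, whereas you observe monotonicity via the sign of the derivative; both amount to the same fact that $\langle\omega,z\rangle\le M$.
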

\begin{proof} We can write
	$$p(\mx(r', \omega)) = \int_{B_2(M)} \varphi(\mx(r', \omega) - \mmu)\pi(\mmu)d\mmu,\qquad p(\mx(r, \omega)) = \int_{B_2(M)} \varphi(\mx(r, \omega) - \mmu)\pi(\mmu)d\mmu,$$
	where we use $\pi(\cdot)$ to denote the density distribution of $\pi$ (which can be a generalized function), and $\varphi(\cdot)$ to denote the density distribution of $\mathcal{N}(0, I_d)$. To prove this lemma, we only need to verify the following two inequalities:
	\begin{enumerate}
		\item For any $\forall r'\in [M, r]$ and any $u\in B_2(M)$, we have $\varphi(\mx(r', \omega) - \mmu)\ge \varphi(\mx(r, \omega) - \mmu)$;
		\item For any $\forall r'\ge r\ge M$ and any $u\in B_2(M)$, we have $\varphi(\mx(r', \omega) - \mmu)\le \varphi(\mx(r, \omega) - \mmu)\exp\left(-\frac{(r'-M)^2 - (r-M)^2}{2}\right)$.
	\end{enumerate}
	Without loss of generality, we assume $\omega = (1, 0, \cdots, 0)$. Then for any $\mmu = (u_1, u_2, \cdots, u_d)$, we have
	\begin{align*}
		\varphi(\mx(r, \omega) - \mmu) & = \frac{1}{\sqrt{2\pi}^d}\exp\left(-\frac{(r - u_1)^2 + \sum_{i=2}^d u_i^2}{2}\right)\\
		\varphi(\mx(r', \omega) - \mmu) & = \frac{1}{\sqrt{2\pi}^d}\exp\left(-\frac{(r' - u_1)^2 + \sum_{i=2}^d u_i^2}{2}\right).
	\end{align*}
	When $M\le r'\le r$, it is easy to see that $|r - u_1|\ge |r' - u_1|$ for any $|u_1|\le M$. The first inequality is verified. As for the second inequality, since $|u_1|\le M\le r\le r'$, we have $(r'-u_1)^2 - (r-u_1)^2\ge (r'-M)^2 - (r-M)^2$, which indicates that
	$$-\frac{(r' - u_1)^2 + \sum_{i=2}^d u_i^2}{2}\le -\frac{(r - u_1)^2 + \sum_{i=2}^d u_i^2}{2} - \frac{(r'-M)^2 - (r-M)^2}{2}.$$
\end{proof}

\begin{lemma}\label{lem-eq2}
	Suppose $p = \pi * \mathcal{N}(0, I_d)$ where $\supp(\pi)\subset B_2(M)$. Then we have
	$$\|\nabla \log p(\mx)\|_2\le 3\|\mx\|_2 + 4M, \quad \forall \mx\in\RR^d.$$
\end{lemma}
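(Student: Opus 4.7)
The plan is to invoke the multi-dimensional Tweedie (empirical Bayes) formula for Gaussian convolutions. Since $p = \pi \ast \varphi$ with $\varphi$ the density of $\mathcal{N}(0, I_d)$, differentiating under the integral gives
\[
\nabla p(\mx) = \int_{B_2(M)} \nabla\varphi(\mx-\mmu)\,\pi(d\mmu) = -\int_{B_2(M)} (\mx-\mmu)\,\varphi(\mx-\mmu)\,\pi(d\mmu).
\]
Dividing by $p(\mx)$ and rearranging yields the identity
\[
\nabla \log p(\mx) = -\mx + \EE[U \mid X=\mx],
\]
where $X = U + Z$ with $U \sim \pi$ and $Z \sim \mathcal{N}(0, I_d)$ independent.

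Now the conditional law of $U$ given $X=\mx$ is absolutely continuous with respect to $\pi$ (it has density proportional to $\varphi(\mx-\cdot)$), so its support is contained in $\supp(\pi) \subset B_2(M)$. By Jensen's inequality, $\|\EE[U \mid X=\mx]\|_2 \le M$. The triangle inequality then gives
\[
\|\nabla \log p(\mx)\|_2 \le \|\mx\|_2 + \|\EE[U\mid X=\mx]\|_2 \le \|\mx\|_2 + M \le 3\|\mx\|_2 + 4M,
\]
which is the stated bound. (In fact the proof yields the sharper estimate $\|\mx\|_2 + M$; the constants $3$ and $4$ are kept only to parallel the one-dimensional statement in Lemma~\ref{lem-2}, which was inherited from the more general subgaussian version proved as Proposition~2 of \cite{polyanskiy2016wasserstein}.)

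The only technicality is justifying differentiation under the integral sign, which is immediate from dominated convergence since $\nabla\varphi(\mx-\mmu)$ is uniformly bounded (and decays super-exponentially) for $\mmu$ ranging over the compact set $B_2(M)$ and $\mx$ in a neighborhood of any fixed point. I do not foresee any substantive obstacle: the lemma is essentially an immediate corollary of Tweedie's formula combined with the compact-support hypothesis on $\pi$.
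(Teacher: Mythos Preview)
Your proof is correct and in fact yields the sharper bound $\|\mx\|_2 + M$, as you note. It differs from the paper's argument: the paper does not write out Tweedie's formula at all but simply invokes Proposition~2 of \cite{polyanskiy2016wasserstein}, which gives the general inequality $\|\nabla\log p(\mx)\|_2\le 3\|\mx\|_2 + 4\|\EE[X]\|_2$ for any Gaussian convolution (with $X\sim\pi$ subgaussian), and then bounds $\|\EE[X]\|_2\le M$ from the support assumption. Your route is more elementary and self-contained, exploiting the compact support directly through the posterior mean, at the cost of only covering the compactly supported case; the paper's black-box citation is coarser here but has the advantage of applying verbatim to the subgaussian setting (Lemma~\ref{lem-sub-eq2}), which is why the constants $3$ and $4$ appear in both lemmas.
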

\begin{proof}
	According to Proposition 2 in~\cite{polyanskiy2016wasserstein}, we have $\forall \mx\in\RR^d$,
	$$\|\nabla \log p(\mx)\|_2\le 3\|\mx\|_2 + 4\|\EE[X]\|_2,$$
	where $X\sim \pi$. Since the support of $\pi$ is a subset of $B_2(M)$, we have $\|\EE[X]\|_2\le M$.
\end{proof}

\begin{proof}[Proof of Theorem \ref{thm-KL-d}]
	According to \eqref{eq-kl-hellinger}, we have
	\begin{equation}\label{eq-polar}
		\begin{aligned}
		\KL(p\|q) & = \int_\Omega\int_0^\infty r^{d-1}p(\mx(r, \omega))\log\frac{p(\mx(r, \omega))}{q(\mx(r, \omega))}drd\omega\\
		& = \int_\Omega\int_0^\infty r^{d-1}\left(p(\mx(r, \omega))\log\frac{p(\mx(r, \omega))}{q(\mx(r, \omega))} - p(\mx(r, \omega)) + q(\mx(r, \omega))\right)drd\omega\\
		H^2(p, q) & = \int_\Omega\int_0^\infty r^{d-1}p(\mx(r, \omega))\left(\sqrt{\frac{q(\mx(r, \omega))}{p(\mx(r, \omega))}} - 1\right)^2drd\omega
	\end{aligned}\end{equation}
	For every $\omega\in\Omega$, we define $r_\omega$ as
	$$r_\omega\triangleq \inf\left\{r:\log \frac{p(\mx(r_\omega, \omega))}{q(\mx(r_\omega, \omega))}\ge 8M^2\right\}.$$
	Notice that for any $r\le 2M$ and $\omega\in\Omega$, we have
	\begin{align*}
		p(\mx(r, \omega)) & = \int_{B_2(M)}\pi(\mmu)\varphi(\mx(r, \omega), \mmu)d\mmu\le \frac{1}{\sqrt{2\pi}^d}\\
		q(\mx(r, \omega)) & = \int_{B_2(M)}\eta(\mmu)\varphi(\mx(r, \omega), \mmu)d\mmu\ge \frac{1}{\sqrt{2\pi}^d}\exp\left(-\frac{(2M + M)^2}{2}\right),
	\end{align*}
	which indicates that
	$$\log\frac{p(\mx(r, \omega))}{q(\mx(r, \omega)}\le \frac{9M^2}{2} < 8M^2.$$
	Hence for every $\omega\in \Omega$, we all have $r_\omega\ge 2M$. And if $r_\omega\neq \infty$, we have that $\log \frac{p(\mx(r_\omega, \omega))}{q(\mx(r_\omega, \omega))} = 8M^2$. According to Lemma \ref{lem-eq} and Lemma \ref{lem-eq2}, we know that for every $r\ge r_\omega$, we all have
	\begin{align*}
		p(\mx(r, \omega)) & \le p(\mx(r_\omega, \omega))\exp\left(-\frac{(r-M)^2 - (r_\omega-M)^2}{2}\right),\\
		\log \frac{p(\mx(r, \omega))}{q(\mx(r, \omega))} & \le \log \frac{p(\mx(r_\omega, \omega))}{q(\mx(r_\omega, \omega))} + (r - r_\omega)(3r + 3r_\omega + 8M)\\
		&\le 8M^2 + (r - r_\omega)(3r + 3r_\omega + 8M).
	\end{align*}
	Therefore, we obtain that
	\begin{equation}\label{eq-pr}\begin{aligned}
		&\quad \int_{r_\omega}^\infty r^{d-1}p(\mx(r, \omega))\log \frac{p(\mx(r, \omega))}{q(\mx(r, \omega))}dr\\
		&\le p(\mx(r_\omega, \omega))\int_{r_\omega}^\infty r^{d-1}(8M^2 + (r-r_\omega)(3r + 3r_\omega + 8M))\exp\left(-\frac{(r-M)^2 - (r_\omega-M)^2}{2}\right)dr.
	\end{aligned}\end{equation}
	We adopt the changes of variables from $r$ to $t = r - r_\omega$, and obtain that
	\begin{equation}\label{eq-first}
	\begin{aligned}
		&\quad 8M^2\int_{r_\omega}^\infty r^{d-1}\exp\left(-\frac{(r-M)^2 - (r_\omega-M)^2}{2}\right)dr\\
		& = 8M^2 r_\omega^{d-1}\int_0^\infty\exp\left((d-1)\log\frac{t + r_\omega}{r_\omega} - \frac{t^2}{2} - (r_\omega - M)t\right)dt
	\end{aligned}\end{equation}
	and
	\begin{equation}\label{eq-second}
	\begin{aligned}
		&\quad \int_{r_\omega}^\infty r^{d-1}(r-r_\omega)(3r + 3r_\omega + 8M)\exp\left(-\frac{(r-M)^2 - (r_\omega-M)^2}{2}\right)dr\\
		& = r_\omega^{d-1}\int_0^\infty\exp\left((d-1)\log\frac{t + r_\omega}{r_\omega} + \log t + \log (3t + 6r_\omega + 8M) - \frac{t^2}{2} - (r_\omega - M)t\right)dt.
	\end{aligned}\end{equation}
	\par We first prove the upper bound \eqref{eq-first}. We define
	$$f(t) \triangleq (d-1)\log\frac{t + r_\omega}{r_\omega} - \frac{1}{2}(r_\omega - M)t.$$
	Since 
	$$d - 1 < d\le M^2 = \frac{1}{2}\cdot (2M)\cdot (2M - M)\le \frac{1}{2}r_\omega(r_\omega - M),$$
	the first-order derivative of $f$ satisfies that 
	$$f'(t) = \frac{d-1}{t + r_\omega} - \frac{1}{2}(r_\omega - M)\le \frac{d-1}{r_\omega} - \frac{1}{2}(r_\omega - M)\le 0, \quad \forall t\ge 0.$$
	Therefore we have
	$$(d-1)\log\frac{t + r_\omega}{r_\omega} - \frac{1}{2}(r_\omega - M)t = f(t)\le f(0) = 0,\quad \forall t\ge 0.$$
	This directly indicates the following upper bound on \eqref{eq-first}.
	\begin{equation}\label{eq-exp}\begin{aligned}
		&\quad \frac{1}{r_\omega^{d-1}}\int_{r_\omega}^\infty r^{d-1}\exp\left(-\frac{(r-M)^2 - (r_\omega-M)^2}{2}\right)dr \le \int_0^\infty\exp\left(-\frac{t^2}{2} - \frac{1}{2}(r_\omega - M)t\right)dt\\
		&\le \int_0^\infty\exp\left(-\frac{1}{2}(r_\omega - M)t\right)dt = \frac{2}{r_\omega - M}
	\end{aligned}\end{equation}
	\par Next we prove the upper bound \eqref{eq-second}. We define
	$$g(t)\triangleq \log t + \log (3t + 6r_\omega + 8M) - \frac{1}{3}(r_\omega - M)t.$$
	Then we have
	$$g'(t) = \frac{1}{t} + \frac{1}{3t + 6r_\omega + 8M} - \frac{r_\omega - M}{3},$$
	which has a single root $t_0$ on $(0, \infty)$, which is also the maximum of $g(t)$ for $t\ge 0$. Further notice 
	$$0 = g'(t_0) = \frac{1}{t_0} + \frac{1}{3t_0 + 6r_\omega + 8M} - \frac{r_\omega - M}{3}\le \frac{4}{3t_0} - \frac{r_\omega - M}{3}.$$
	Hence we get $t_0\le \frac{4}{r_\omega - M}\le \frac{4}{M}$, and
	$$3t_0 + 6r_\omega + 8M\le \frac{12}{M} + 6r_\omega + 8M\le 6r_\omega + 20M\le 32(r_\omega - M).$$
	This gives the following upper bound on $g(t)$ for $t\ge 0$:
	$$g(t)\le g(t_0)\le \log t_0(3t_0 + 6r_\omega + 8M) - \frac{4}{3}\le \log 128 - \frac{4}{3},$$
	Combining this result with our upper bound on the function $f$, we get
	\begin{align*}
		&\quad (d-1)\log\frac{t + r_\omega}{r_\omega} + \log t + \log (3t + 6r_\omega + 8M) - \frac{t^2}{2} - (r_\omega - M)t\\
		& = f(t) + g(t) - \frac{t^2}{2} - \frac{(r_\omega - M)t}{6}\le \log 128 - \frac{4}{3} - \frac{(r_\omega - M)t}{6}.
	\end{align*}
	This directly indicates the following upper bound on \eqref{eq-second}.
	\begin{align*}
		&\quad \int_{r_\omega}^\infty r^{d-1}(r-r_\omega)(3r + 3r_\omega + 8M)\exp\left(-\frac{(r-M)^2 - (r_\omega-M)^2}{2}\right)dr\\
		&\le r_\omega^{d-1}\int_0^\infty \exp\left(\log 128 - \frac{4}{3} - \frac{(r_\omega - M)t}{6}\right)dt = 128r_\omega^{d-1}\cdot \frac{6e^{-4/3}}{r_\omega - M}\\
		& \le \frac{210r_\omega^{d-1}}{r_\omega - M}\le \frac{210M^2r_\omega^{d-1}}{r_\omega - M}.
	\end{align*}
	We combine these two upper bounds together. According to \eqref{eq-pr} we obtain that
	\begin{align*}
		&\quad \int_{r_\omega}^\infty r^{d-1}p(\mx(r, \omega))\log \frac{p(\mx(r, \omega))}{q(\mx(r, \omega))}dr\le \frac{212M^2r_\omega^{d-1}}{r_\omega - M} p(\mx(r_\omega, \omega)),
	\end{align*}
	\par Similarly, we can also obtain bound on $\int_{r_\omega}^\infty r^{d-1}q(\mx(r, \omega))dr$: According to Lemma \ref{lem-eq} we obtain that
	$$q(\mx(r, \omega)) \le q(\mx(r_\omega, \omega))\exp\left(-\frac{(r-M)^2 - (r_\omega-M)^2}{2}\right).$$
	According to \eqref{eq-exp} we have
	$$\int_{r_\omega}^\infty r^{d-1}q(\mx(r, \omega))dr\le q(\mx(r_\omega, \omega))\int_{r_\omega}^\infty r^{d-1}\exp\left(-\frac{(r-M)^2 - (r_\omega-M)^2}{2}\right)dr\le \frac{2r_\omega^{d-1}}{r_\omega - M}q(\mx(r_\omega, \omega)).$$
	We further notice that 
	$$\log \frac{p(\mx(r_\omega, \omega))}{q(\mx(r_\omega, \omega))} = 8M^2\ge 0,$$
	which indicates that $q(\mx(r_\omega, \omega))\le p(\mx(r_\omega, \omega))$. Therefore, since $M\ge 1$, we have
	$$\int_{r_\omega}^\infty r^{d-1}q(\mx(r, \omega))dr\le \frac{2M^2r_\omega^{d-1}}{r_\omega - M}p(\mx(r_\omega, \omega)).$$
	\par Therefore, we have the following upper bound 
	\begin{align*}
	&\quad \int_\Omega\int_{r_\omega}^\infty r^{d-1}\left(p(\mx(r, \omega))\log\frac{p(\mx(r, \omega))}{q(\mx(r, \omega))} - p(\mx(r, \omega)) + q(\mx(r, \omega))\right)drd\omega\\
	& \le \int_\Omega\int_{r_\omega}^\infty r^{d-1}p(\mx(r, \omega))\log\frac{p(\mx(r, \omega))}{q(\mx(r, \omega))}drd\omega + \int_\Omega\int_{r_\omega}^\infty r^{d-1}q(\mx(r, \omega))drd\omega\\
	& \le \int_\Omega\frac{(212 + 2)M^2r_\omega^{d-1}p(\mx(r_\omega, \omega))}{r_\omega - M}d\omega = \int_\Omega\frac{214M^2r_\omega^{d-1}p(\mx(r_\omega, \omega))}{r_\omega - M}d\omega.
	\end{align*}

	\par Next, according to Lemma \ref{lem-eq2}, for $\forall \omega\in \Omega$ and $r\in [0, r_\omega]$ we have
	\begin{align*}
		\left\|\nabla \log\frac{p(\mx(r, \omega))}{q(\mx(r, \omega))}\right\|_2 & = \|\nabla \log p(\mx(r, \omega))\|_2 + \|\nabla \log q(\mx(r, \omega))\|_2\\
		& \le 6r + 8M\le 6r_\omega + 8M\le  8r_\omega + 8M.
	\end{align*}
	Notice that we also have $\log\frac{p(\mx(r_\omega, \omega))}{q(\mx(r_\omega, \omega))} = 8M^2$. Hence for $\forall r_\omega - \frac{1}{r_\omega + M}\le r\le r_\omega$, 
	$$\log\frac{p(\mx(r, \omega))}{q(\mx(r, \omega))}\ge 8M^2 - (8r_\omega + 8M)\cdot \frac{1}{r_\omega + M} = 8M^2 - 8\ge 2,$$
	which indicates that
	$$\left(\sqrt{\frac{q(\mx(r, \omega))}{p(\mx(r, \omega))}} - 1\right)^2\ge \left(\frac{1}{e^2} - 1\right)^2\ge \frac{1}{2}.$$
	We further notice that $r_\omega\ge 2M\ge M$ and $d-1 < d\le M^2$. Hence for $\forall r_\omega - \frac{1}{r_\omega + M}\le r\le r_\omega$, 
	$$r^{d-1}\ge r_\omega^{d-1}\cdot \left(1 - \frac{1}{r_\omega(r_\omega + M)}\right)^{d-1}\ge r_\omega^{d-1}\cdot \left(1 - \frac{d}{r_\omega(r_\omega + M)}\right)\ge r_\omega^{d-1}\left(1 - \frac{d-1}{2M^2}\right)\ge \frac{1}{2}r_\omega^{d-1}.$$
	After noticing that $p(\mx(r, \omega))\ge p(\mx(r_\omega, \omega))$ according to Lemma \ref{lem-eq}, we obtain
	\begin{align*}
		&\quad \int_{r_\omega - \frac{1}{r_\omega + M}}^{r_\omega}r^{d-1}p(\mx(r, \omega))\cdot \left(\sqrt{\frac{q(\mx(r, \omega))}{p(\mx(r, \omega))}} - 1\right)^2dr\\
		& \ge \frac{1}{r_\omega + M}\cdot \min_{r_\omega - \frac{1}{r_\omega + M}\le r\le r_\omega}r^{d-1}p(\mx(r, \omega))\cdot \left(\sqrt{\frac{q(\mx(r, \omega))}{p(\mx(r, \omega))}} - 1\right)^2\\
		&\ge \frac{1}{r_\omega + M}\cdot \frac{1}{2}r_\omega^{d-1}\cdot \frac{1}{2}p(\mx(r_\omega, \omega))\ge \frac{r_\omega^{d-1}p(\mx(r_\omega, \omega))}{12(r_\omega - M)}\\
		& \ge \frac{1}{2568M^2}\int_{r_\omega}^\infty r^{d-1}\left(p(\mx(r, \omega))\log\frac{p(\mx(r, \omega))}{q(\mx(r, \omega))} - p(\mx(r, \omega)) + q(\mx(r, \omega))\right)dr.
	\end{align*}
	Therefore, according to \eqref{eq-polar}, we have
	\begin{align*}
		H^2(p, q) & = \int_\Omega\int_0^\infty r^{d-1}p(\mx(r, \omega))\left(\sqrt{\frac{q(\mx(r, \omega))}{p(\mx(r, \omega))}} - 1\right)^2drd\omega\\
		& \ge \int_\Omega\int_{r_\omega - \frac{1}{r_\omega + M}}^{r_\omega} r^{d-1}p(\mx(r, \omega))\left(\sqrt{\frac{q(\mx(r, \omega))}{p(\mx(r, \omega))}} - 1\right)^2drd\omega\\
		& \ge \frac{1}{2568 M^2}\int_\Omega\int_{r_\omega}^\infty r^{d-1}\left(p(\mx(r, \omega))\log\frac{p(\mx(r, \omega))}{q(\mx(r, \omega))} - p(\mx(r, \omega)) + q(\mx(r, \omega))\right)drd\omega.
	\end{align*}

	\par Next we consider those $\mx(r, \omega)$ with $0\le r\le r_\omega$. According to the definition of $r_\omega$, we know that for any such $r$, we have 
	$$\log\frac{p(\mx(r, \omega))}{q(\mx(r, \omega))}\le  8M^2.$$
	Hence from Lemma \ref{lem-formula}, we have for any $\omega\in\Omega$ and $r\in [0, r_\omega]$,
	$$\frac{p(\mx(r, \omega))}{q(\mx(r, \omega))}\log\frac{p(\mx(r, \omega))}{q(\mx(r, \omega))} - \frac{p(\mx(r, \omega))}{q(\mx(r, \omega))} + 1\le 9M^2\left(\sqrt{\frac{p(\mx(r, \omega))}{q(\mx(r, \omega))}} - 1\right)^2,$$
	which indicates that
	\begin{align*}
		&\quad \int_\Omega\int_0^{r_\omega} r^{d-1}\left(p(\mx(r, \omega))\log\frac{p(\mx(r, \omega))}{q(\mx(r, \omega))} - p(\mx(r, \omega)) + q(\mx(r, \omega))\right)drd\omega\\
		& = \int_{\omega\in \Omega}\int_0^{r_\omega} r^{d-1}q(\mx(r, \omega))\cdot\left(\frac{p(\mx(r, \omega))}{q(\mx(r, \omega))}\log\frac{p(\mx(r, \omega))}{q(\mx(r, \omega))} - \frac{p(\mx(r, \omega))}{q(\mx(r, \omega))} + 1\right)drd\omega\\
		&\le 9M^2\int_\Omega\int_0^{r_\omega} r^{d-1}q(\mx(r, \omega))\cdot\left(\sqrt{\frac{p(\mx(r, \omega))}{q(\mx(r, \omega))}} - 1\right)^2drd\omega\\
		& \le 9M^2\int_\Omega\int_0^{\infty} r^{d-1}q(\mx(r, \omega))\cdot\left(\sqrt{\frac{p(\mx(r, \omega))}{q(\mx(r, \omega))}} - 1\right)^2drd\omega = 9M^2H^2(p, q).
	\end{align*}
	\par Combine the above two cases, we obtain that
	\begin{align*}
		\KL(p\|q) & = \int_\Omega\int_0^\infty r^{d-1}\left(p(\mx(r, \omega))\log\frac{p(\mx(r, \omega))}{q(\mx(r, \omega))} - p(\mx(r, \omega)) + q(\mx(r, \omega))\right)drd\omega\\
		& = \int_\Omega\int_0^{r_\omega} r^{d-1}\left(p(\mx(r, \omega))\log\frac{p(\mx(r, \omega))}{q(\mx(r, \omega))} - p(\mx(r, \omega)) + q(\mx(r, \omega))\right)drd\omega\\
		&\quad + \int_\Omega\int_{r_\omega}^\infty r^{d-1}\left(p(\mx(r, \omega))\log\frac{p(\mx(r, \omega))}{q(\mx(r, \omega))} - p(\mx(r, \omega)) + q(\mx(r, \omega))\right)drd\omega\\
		& \le 18M^2 H^2(p, q) + 5136M^2 H^2(p, q) = 5154M^2H^2(p, q).
	\end{align*}
	This completes the proof of Theorem \ref{thm-KL-d}.
\end{proof}

\section{Proof of Theorem \ref{thm-sub-lb}} \label{app-3}
	\par Given some constant $r > 1$, we  consider the following distribution:
	$$\pi_r = (1-h_r)\delta_0 + h_r\delta_r,$$
	where $h_r = \exp\left(-\frac{r^2}{2K^2}\right).$ Then for any $r > 1$, $\pi_r$ is a $K$-subgaussian distribution.
	\par We let $p_r = \pi_r * \mathcal{N}(0, 1)$. Since $x\log x - x + 1\ge 0$ holds for all $x > 0$, we have
	\begin{align*}
		& \quad \KL(p_r\|\mathcal{N}(0, 1))\\
		& = \int_{-\infty}^\infty p_r(x)\log\frac{p_r(x)}{\varphi(x)}dx = \int_{-\infty}^\infty \varphi(x)\cdot \left(\frac{p_r(x)}{\varphi(x)}\log\frac{p_r(x)}{\varphi(x)} - \frac{p_r(x)}{\varphi(x)} + 1\right)dx\\
		& \ge \int_{r}^{r+1} \varphi(x)\cdot \left(\frac{p_r(x)}{\varphi(x)}\log\frac{p_r(x)}{\varphi(x)} - \frac{p_r(x)}{\varphi(x)} + 1\right)dx\ge \int_r^{r+1}p_r(x)\log\frac{p_r(x)}{\varphi(x)} - p_r(x)dx.
	\end{align*}
	According to our construction, for $r\le x\le r+1$ we have
	$$p_r(x) = (1-h_r)\varphi(x) + h_r\varphi(r-x)\ge h_r\cdot \varphi(1) \quad \text{ and also }\quad \varphi(x)\le \varphi(r).$$
	Therefore, we obtain that for $r\le x\le r+1$,
	$$\log\frac{p_r(x)}{\varphi(x)}\le \log \frac{p_r\varphi(1)}{\varphi(r)} = \log\exp\left(-\frac{r^2}{2K^2} - \frac{1}{2} + \frac{r^2}{2}\right) = \frac{r^2}{2} - \frac{r^2}{2K^2} - \frac{1}{2}.$$
	Noticing that $\varphi(1) = \frac{1}{\sqrt{2\pi}}\exp\left(-\frac{1}{2}\right)\ge \frac{1}{5}$, we obtain that
\begin{equation}
\KL(p_r\|\mathcal{N}(0, 1))\ge \int_r^{r+1}\frac{h_r}{5}\cdot \left(\frac{r^2}{2} - \frac{r^2}{2K^2} - \frac{1}{2} - 1\right)dx = \left(\frac{r^2}{10} - \frac{r^2}{10K^2} - \frac{3}{10}\right)h_r.
    \label{eq:KLlower}
\end{equation}	

Next, we write
	$$H^2(p_r, \mathcal{N}(0, 1)) = \int_{-\infty}^\infty \left(\sqrt{p_r(x)} - \sqrt{\varphi(x)}\right)^2dx.$$
	We divide the integral domain into three regions: $(-\infty, -r], [-r, r]$ and $[r, \infty)$, and upper bound the contribution  from each region separately. 
 
 Noticing that $p_r(x) = (1-h_r)\varphi(x) + h_r\varphi(r-x)$, we have for any $x\le -r$, 
	$$0\le p_r(x)\le \varphi(x).$$
	Therefore,
	\begin{align*}
		&\quad \int_{-\infty}^{-r} \left(\sqrt{p_r(x)} - \sqrt{\varphi(x)}\right)^2dx \le \int_{-\infty}^{-r}\varphi(x)dx = \frac{1}{\sqrt{2\pi}}\int_{-\infty}^{0}\exp\left(-\frac{(x+r)^2}{2}\right)dx\\
		& \le \exp\left(-\frac{r^2}{2}\right)\int_{-\infty}^0\exp(-rx)dx = \frac{1}{r}\exp\left(-\frac{r^2}{2}\right)\le \exp\left(-\frac{r^2}{2K^2}\right) = h_r.
	\end{align*}
 
	Noticing that for those $x\ge r$,
	$$(1 - h_r)\varphi(x)\le \varphi(x) = \frac{1}{\sqrt{2\pi}}\exp\left(-\frac{x^2}{2}\right)\le \frac{1}{\sqrt{2\pi}}\exp\left(-\frac{r^2 + (x - r)^2}{2}\right)\le h_r\varphi(r - x),$$
	we obtain 
	$$0\le \varphi(x)\le (1-h_r)\varphi(x) + h_r\varphi(r-x) = p_r(x)\le 2h_r\varphi(r-x) \qquad \forall x\ge r,$$
	which indicates that
	$$\int_{r}^{\infty} \left(\sqrt{p_r(x)} - \sqrt{\varphi(x)}\right)^2dx = \int_{r}^\infty p_r(x)dx\le 2h_r\int_{r}^\infty \varphi(r-x)dx = h_r.$$

 Finally for those $-r\le x\le r$, we notice that 
	$$p_r(x) - \varphi(x) = (1-h_r)\varphi(x) + h_r\varphi(r-x) - \varphi(x) = h_r(\varphi(r-x) - \varphi(x)),$$
	hence $|p_r(x) - \varphi(x)|\le h_r\cdot |\varphi(r-x) - \varphi(x)|\le \frac{h_r}{\sqrt{2\pi}}\le h_r$. Therefore, if either $p_r(x)\ge h_r$ or $\varphi(x)\ge h_r$, we have
	$$\left(\sqrt{p_r(x)} - \sqrt{\varphi(x)}\right)^2 = \frac{(p_r(x) - \varphi(x))^2}{\left(\sqrt{p_r(x)} + \sqrt{\varphi(x)}\right)^2}\le \frac{h_r^2}{\sqrt{h_r}^2} = h_r.$$
	And if neither $p_r(x)\ge h_r$ nor $\varphi(x)\ge h_r$ holds, then we have $0\le p_r(x), \varphi(x)\le h_r$ and hence
	$$\left(\sqrt{p_r(x)} - \sqrt{\varphi(x)}\right)^2\le h_r.$$
	Overall, we have $\left(\sqrt{p_r(x)} - \sqrt{\varphi(x)}\right)^2\le h_r$ and hence
	$$\int_{-r}^r \left(\sqrt{p_r(x)} - \sqrt{\varphi(x)}\right)^2dx\le 2rh_r.$$
 
	Combining the contributions from these regions, we obtain that
\begin{equation}
    \label{eq:Hupper}
     H^2(p_r, \mathcal{N}(0, 1)) = \int_{-\infty}^\infty\left(\sqrt{p_r(x)} - \sqrt{\varphi(x)}\right)^2dx\le h_r + 2rh_r + h_r = (2 + 2r)h_r.
\end{equation}

 Finally, applying \prettyref{eq:KLlower} and \prettyref{eq:Hupper}, we choose the parameter $r$ so that $\KL/H^2$ exceeds an arbitrary constant $C$. Noticing that $K > 1$, we have $\frac{r^2}{10} - \frac{r^2}{10K^2}\ge 0$, hence there exists $r > 1$ such that
	$$\frac{r^2}{10} - \frac{r^2}{10K^2} - \frac{3}{10}\ge c\cdot (2 + 2r).$$
	And for this $r$, we have
	$$\KL(p_r\|\mathcal{N}(0, 1))\ge c\cdot H^2(p_r, \mathcal{N}(0, 1)).$$
	This finishes the proof of Theorem \ref{thm-sub-lb}.

\begin{remark}
    Notice that with similar proving techniques, we can also show that 
\end{remark}

\section{Proof of Theorem \ref{thm-sub-ub}} \label{app-4}
\par Without loss of generality, we assume $d\le \frac{1}{2(1-K)}$ (otherwise we use $1 - \frac{1}{2d}\ge K$ to replace $K$, and since $\pi, \eta$ are $K$-subgaussian, hence they are also $\left(1 - \frac{1}{2d}\right)$-subgaussian. The results still hold). We abbreviate $f_\pi(\cdot), f_\eta(\cdot)$ as $p(\cdot), q(\cdot)$.
\begin{lemma}\label{lem-sub-eq1}
	Suppose $p = \pi * \mathcal{N}(0, I_d)$, where $\pi$ is a $K$-subgaussian distribution with $K < 1$. Then for every $r\ge \frac{2\sqrt{K}}{1 - \sqrt{K}}$ and any $\omega\in \Omega$, we have the following propositions:
	\begin{enumerate}
		\item $\forall r'\in \left[\frac{2\sqrt{K}}{1 - \sqrt{K}}, r\right]$, we have 
		\begin{equation}\label{eq-sub-eq1}p(\mx(r', \omega))\ge \frac{p(\mx(r, \omega))}{7}.\end{equation}
		\item $\forall r'\ge r$, we have 
		\begin{equation}\label{eq-sub-eq2}p(\mx(r', \omega))\le 7p(\mx(r, \omega))\cdot \exp\left(-\frac{1 - K}{4}r(r' - r)\right).\end{equation}
	\end{enumerate}
\end{lemma}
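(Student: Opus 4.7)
The plan is to establish the upper bound (2) first (for $r'\ge r\ge r_0$, where $r_0\eqdef 2\sqrt K/(1-\sqrt K)$); part (1) will then follow by applying (2) with $r$ and $r'$ swapped, yielding $p(\mx(r,\omega))\le 7p(\mx(r',\omega))\exp(-(1-K)r'(r-r')/4)\le 7p(\mx(r',\omega))$. The argument parallels Lemma~\ref{lem-eq} in the compact case, with strict monotonicity of $p$ along rays replaced by an approximate, constant-factor monotonicity governed by the subgaussian tail.

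Without loss of generality take $\omega=e_1$ and set $\mu_1\eqdef \mmu\cdot\omega$. Split $\RR^d=A\cup A^c$ with $A=\{\mmu:\mu_1\le \sqrt K\,m\}$, where $m\eqdef (r+r')/2$ is the midpoint. On $A$ I use the pointwise identity $\varphi(\mx(r',\omega)-\mmu)/\varphi(\mx(r,\omega)-\mmu)=\exp(-(r'-r)(m-\mu_1))$ and the bound $m-\mu_1\ge (1-\sqrt K)m\ge (1-\sqrt K)r$ to obtain
$$\int_A\varphi(\mx(r',\omega)-\mmu)\pi(d\mmu)\le e^{-(1-\sqrt K)r(r'-r)}p(\mx(r,\omega))\le e^{-(1-K)r(r'-r)/4}\,p(\mx(r,\omega)),$$
the last step using $1-\sqrt K\ge (1-K)/4$ on $K\in[0,1]$. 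On $A^c$ I bound $\varphi\le (2\pi)^{-d/2}$ and invoke the $K$-subgaussian tail $\pi(A^c)\le \pi(\|\mmu\|_2>\sqrt K\,m)\le \exp(-m^2/(2K))$, giving $\int_{A^c}\varphi(\mx(r',\omega)-\mmu)\pi(d\mmu)\le (2\pi)^{-d/2}e^{-m^2/(2K)}$.

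The main technical step---and the main obstacle---is to show this tail integral is at most $6\,p(\mx(r,\omega))\,e^{-(1-K)r(r'-r)/4}$, which would combine with the $A$-bound to give $p(\mx(r',\omega))\le 7\,p(\mx(r,\omega))\,e^{-(1-K)r(r'-r)/4}$. For a lower bound on $p(\mx(r,\omega))$, I restrict the mixing integral to $\{\|\mmu\|_2\le 1\}$: subgaussianity forces $\pi(\|\mmu\|_2\le 1)\ge 1-e^{-1/(2K^2)}\ge 1-e^{-1/2}$ (using $K<1$), and on this set $\varphi(\mx(r,\omega)-\mmu)\ge (2\pi)^{-d/2}e^{-(r+1)^2/2}$, so $p(\mx(r,\omega))\ge (1-e^{-1/2})(2\pi)^{-d/2}e^{-(r+1)^2/2}$. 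It then suffices to verify
$$h(r,t)\eqdef -\frac{(r+t/2)^2}{2K}+\frac{(r+1)^2}{2}+\frac{(1-K)r\,t}{4}\ \le\ \log\bigl(6(1-e^{-1/2})\bigr)$$
uniformly over $r\ge r_0$ and $t=r'-r\ge 0$. A short calculus argument does this: one checks $\partial h/\partial t<0$ throughout, so $h(r,t)\le h(r,0)=(r+1)^2/2-r^2/(2K)$, which is a downward parabola in $r$ maximized at $r=K/(1-K)\le r_0$ (since $r_0/(K/(1-K))=2(1+\sqrt K)/\sqrt K\ge 4$); hence the supremum over $r\ge r_0$ is $h(r_0,0)=-(3+\sqrt K)/(2(1-\sqrt K))\le -3/2$, which lies below $\log(6(1-e^{-1/2}))\approx 0.86$. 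The hard part of the whole argument is picking the right splitting threshold: taking it at the \emph{midpoint} $\sqrt K\,m$ (rather than $\sqrt K\,r$) is exactly what makes the $A$-piece decay at the rate $\exp(-(1-K)r(r'-r)/4)$ we are aiming for, while the scale $\sqrt K$ (balancing the subgaussian rate $K^{-2}$ against the Gaussian rate $1$) is what makes the tail reduction close precisely at $r_0=2\sqrt K/(1-\sqrt K)$.
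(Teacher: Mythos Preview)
Your proof is correct and follows the paper's overall strategy (split the mixing integral, handle the ``inner'' piece by the pointwise Gaussian ratio and the ``outer'' piece by the subgaussian tail compared against a core lower bound for $p(\mx(r,\omega))$, then derive (1) from (2) by swapping $r$ and $r'$). The decomposition, however, is genuinely different and cleaner than the paper's. The paper partitions into \emph{three} shells in $\|\mmu\|_2$ with cutoffs $\alpha=\sqrt K(r+1)$ and $\beta=(r+r')/2$, bounding the middle and outer shells separately via the subgaussian tail and the inner shell via a norm-based Gaussian comparison; this produces three exponential terms that must each be shown to beat $\exp(-(1-K)r(r'-r)/4)$. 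You instead split into \emph{two} half-spaces in the single coordinate $\mu_1=\mmu\cdot\omega$ at the threshold $\sqrt K\,m$, which lets you use the exact one-dimensional ratio $\varphi(\mx(r',\omega)-\mmu)/\varphi(\mx(r,\omega)-\mmu)=\exp(-(r'-r)(m-\mu_1))$ on $A$ and a single tail term on $A^c$; your choice to place the cutoff at the midpoint $m$ rather than at $r$ is what makes the decay rate on $A$ come out exactly right, and the scalar calculus for $h(r,t)$ is short and transparent. What your route buys is fewer cases and a sharper match to the target exponent; what the paper's three-shell split buys is that it works verbatim with $\|\mmu\|_2$ throughout (no reduction to a coordinate), though at the cost of more bookkeeping.
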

\begin{proof}
For every $r'\ge r$ and $\omega\in \Omega$, notice that we can write $p(\cdot)$ as the following integral:
	$$p(\mx(r', \omega)) = \int_{\RR^d} \varphi(\mx(r', \omega) - \mmu)\pi(\mmu)d\mmu,\qquad p(\mx(r, \omega)) = \int_{\RR^d} \varphi(\mx(r, \omega) - \mmu)\pi(\mmu)d\mmu,$$
	where we use $\pi(\cdot)$ to denote the density distribution of $\pi$ (which can be a generalized function), and $\varphi(\cdot)$ to denote the density distribution of $\mathcal{N}(0, I_d)$.
	
	Since $\pi$ is a $K$-subgaussian distribution, we have $\mathbf{P}[\|X\|_2\ge \alpha]\le \exp\left(-\frac{\alpha^2}{2K^2}\right)$ for $\alpha\ge 0$ and $\mathbf{P}[\|X\|_2\ge 1]\le \frac{1}{\sqrt{e}}\le \frac{2}{3}$, where $X\sim \pi$. Therefore, $\mathbf{P}[\|X\|_2\le 1]\ge \frac{1}{3}$. Given $0\le \alpha\le r\le \beta\le r'$ ($\alpha, \beta$ will be specified later),  we have
	\begin{align*}
	\int_{\alpha\le \|\mmu\|\le \beta}\varphi(\mx(r', \omega) - \mmu)\pi(\mmu)d\mmu & \le \frac{1}{\sqrt{2\pi}^d}\exp\left(-\frac{(r' - \beta)^2}{2}\right)\cdot \mathbf{P}[\|X\|_2\ge \alpha]\\
	&\le \frac{1}{\sqrt{2\pi}^d}\exp\left(-\frac{\alpha^2}{2K^2} - \frac{(r' - \beta)^2}{2}\right);\\
	p(\mx(r, \omega)) & \ge \frac{1}{\sqrt{2\pi}^d}\exp\left(-\frac{(r + 1)^2}{2}\right)\cdot \mathbf{P}[\|X\|_2\le 1]\\
	&\ge \frac{1}{3}\cdot \frac{1}{\sqrt{2\pi}^d}\exp\left(-\frac{(r+1)^2}{2}\right),
	\end{align*}
	which indicates that
	$$\int_{\alpha\le \|\mmu\|\le \beta}\varphi(\mx(r', \omega) - \mmu)\pi(\mmu)d\mmu\le p(\mx(r, \omega))\cdot 3\exp\left(\frac{(r+1)^2}{2} - \frac{\alpha^2}{2K^2} - \frac{(r' - \beta)^2}{2}\right).$$
	We further have
	$$\int_{\|\mmu\|_2\ge \beta}\varphi(\mx(r', \omega) - \mmu)\pi(\mmu)d\mmu\le \frac{1}{\sqrt{2\pi}^d}\cdot \mathbf{P}[\|X\|_2\ge \beta]\le \frac{1}{\sqrt{2\pi}^d}\exp\left(-\frac{\beta^2}{2K^2}\right),$$
	which indicates that
	$$\int_{\alpha\le \|\mmu\|\le \beta}\varphi(\mx(r', \omega) - \mmu)\pi(\mmu)d\mmu\le p(\mx(r, \omega))\cdot 3\exp\left(\frac{(r+1)^2}{2} - \frac{\beta^2}{2K^2}\right)$$
	Finally, for every $\mx$ such that $\|\mx\|_2\le \alpha$, similar to previous proof we have 
	$$\|\mx - \mx(r, \omega)\|_2^2 - (r - \alpha)^2\le \|\mx - \mx(r', \omega)\|_2^2 - (r' - \alpha)^2,$$
	which indicates that
	\begin{align*}
		\int_{\|u\|\le \alpha}\varphi(\mx(r', \omega) - \mmu)\pi(\mmu)d\mmu & \le \int_{\|u\|\le \alpha}\varphi(\mx(r, \omega) - \mmu)\pi(\mmu)d\mmu\cdot \exp\left(\frac{(r-\alpha)^2}{2} - \frac{(r'-\alpha)^2}{2}\right)\\
		& \le p(\mx(r, \omega))\exp\left(\frac{(r-\alpha)^2}{2} - \frac{(r'-\alpha)^2}{2}\right).
	\end{align*}
	Above all, we get
	\begin{align*}
		p(\mx(r', \omega)) & \le p(\mx(r, \omega))\cdot \Bigg(3\exp\left(\frac{(r+1)^2}{2} - \frac{\alpha^2}{2K^2} - \frac{(r' - \beta)^2}{2}\right)\\
		&\quad + 3\exp\left(\frac{(r+1)^2}{2} - \frac{\beta^2}{2K^2}\right) + \exp\left(\frac{(r-\alpha)^2}{2} - \frac{(r'-\alpha)^2}{2}\right)\Bigg)
	\end{align*}
	\par Further noticing that when $r\ge \frac{2\sqrt{K}}{1 - \sqrt{K}}$, we have $$r - \sqrt{K}(1 + r)\ge \frac{1 - \sqrt{K}}{2}r\ge 0.$$
	Therefore, choosing $\alpha = \sqrt{K}(r+1)$ and $\beta = \frac{r' + r}{2}$, and noticing that $2(1-\sqrt{K})\ge 1-K$ holds for all $0 < K < 1$, we will get
	\begin{align*}
	\frac{(r-\alpha)^2}{2} - \frac{(r' - \alpha)^2}{2} & = \exp\left(-\frac{(r' - r)^2}{2} - (r' - r)(r - \sqrt{K}(r+1))\right)\\
	\le \exp\left(-\frac{1 - \sqrt{K}}{2}r(r' - r)\right) & \le \exp\left(-\frac{1 - K}{4}r(r' - r)\right),\\
	\frac{(r+1)^2}{2} - \frac{\alpha^2}{2K^2} - \frac{(r' - \beta)^2}{2} & = -\frac{(r+1)^2(1-K)}{2K} - \frac{(r' - r)^2}{8}\le -\frac{(1-K)r^2}{2} - \frac{(r' - r)^2}{8},\\
	\frac{(r+1)^2}{2} - \frac{\beta^2}{2K^2} & \le \frac{(r+1)^2}{2} - \frac{r^2}{2K^2} - \frac{(r' - r)^2}{8K^2}\le \frac{(r+1)^2}{2} - \frac{(r+1)^2}{2K} - \frac{(r' - r)^2}{8K^2}\\
	& \le -\frac{(1-K)r^2}{2} - \frac{(r' - r)^2}{8}.
	\end{align*}
	Hence we get
	$$p(\mx(r', \omega))\le p(\mx(r, \omega))\cdot \left(6\exp\left(-\frac{(1-K)r^2}{2} - \frac{(r' - r)^2}{8}\right) + \exp\left(-\frac{1 - K}{4}r(r' - r)\right)\right).$$
	Next we notice that
	$$\frac{(1-K)r^2}{2} + \frac{(r' - r)^2}{8}\ge (1-K)\cdot \left(\frac{r^2}{2} + \frac{(r' - r)^2}{8}\right)\ge 2(1-K)\frac{r(r'-r)}{4}\ge \frac{(1-K)r(r'-r)}{4},$$
	which indicates that
	$$p(\mx(r', \omega))\le 7p(\mx(r, \omega))\cdot\exp\left(-\frac{1 - K}{4}r(r' - r)\right).$$
	This proves \eqref{eq-sub-eq2}.
	
 Finally, swapping $r$ and $r'$ in \eqref{eq-sub-eq2}, and noticing that 
	$$0\le \exp\left(-\frac{1 - K}{4}r(r' - r)\right)\le 1,$$
	we get \eqref{eq-sub-eq1}.
\end{proof}
\begin{lemma}\label{lem-sub-eq2}
	Suppose $p = \pi * \mathcal{N}(0, I_d)$, where $\pi$ is a $K$-subgaussian distribution with $K < 1$. For $\mx\in\mathbb{R}^d$, we have
	$$\|\nabla \log p(\mx)\|_2\le 3\|\mx\|_2 + 12, \quad \|\nabla \log p(\mx)\|_2\le 3\|r\|_2 + 12.$$
\end{lemma}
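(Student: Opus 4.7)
The plan is to follow exactly the same route as Lemma \ref{lem-eq2} from the compactly supported case. The key input is Proposition 2 of \cite{polyanskiy2016wasserstein}, which for any mixing distribution $\pi$ gives the pointwise bound
$$\|\nabla \log p(\mx)\|_2 \le 3\|\mx\|_2 + 4\|\EE[X]\|_2, \qquad X \sim \pi.$$
The only new ingredient is replacing the bound $\|\EE[X]\|_2 \le M$ (which was immediate in the compactly supported setting) by a subgaussian moment estimate.

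For this, I would use Jensen's inequality $\|\EE[X]\|_2 \le \EE\|X\|_2$ together with the layer-cake identity and the assumption that $\pi$ is $K$-subgaussian:
$$\EE\|X\|_2 = \int_0^\infty \mathbf{P}[\|X\|_2 > t]\,dt \le \int_0^\infty \exp\!\left(-\frac{t^2}{2K^2}\right)dt = K\sqrt{\pi/2}.$$
Since $K < 1$, this gives $4\|\EE[X]\|_2 \le 4\sqrt{\pi/2} < 12$, and plugging back into the Polyanskiy--Wu gradient bound yields $\|\nabla \log p(\mx)\|_2 \le 3\|\mx\|_2 + 12$ as required. The second inequality displayed in the lemma reads with $\|r\|_2$ in place of $\|\mx\|_2$; this is a typo for the polar-coordinate restatement $\|\nabla \log p(\mx(r,\omega))\|_2 \le 3r + 12$, which follows immediately from $\|\mx(r,\omega)\|_2 = r$.

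I do not anticipate any substantive obstacle here: the statement is essentially a corollary of the already-cited score bound combined with a textbook subgaussian first-moment estimate. The only thing to be mindful of is that the constant $12$ is chosen generously so that the argument goes through uniformly for all $K \in (0,1)$; indeed, the worst case $K \uparrow 1$ gives $4\sqrt{\pi/2}\approx 5.01$, comfortably below $12$, which leaves room to absorb the subgaussian constant in later applications of the lemma in the proof of Theorem \ref{thm-sub-ub}.
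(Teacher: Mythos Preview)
Your proposal is correct and follows essentially the same approach as the paper: invoke the score bound from \cite{polyanskiy2016wasserstein} and then bound $\EE\|X\|_2$ via the layer-cake formula and the $K$-subgaussian tail. Your Gaussian integral $K\sqrt{\pi/2}$ is in fact the correct value (the paper writes $\sqrt{2\pi}K$, which is off by a factor of two but still below $3$), and your remark that the second displayed inequality is a polar-coordinate restatement is also right.
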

\begin{proof}
	According to Proposition 2 in~\cite{polyanskiy2016wasserstein}, we only need to verify $\mathbb{E}[\|X\|_2]\le 3$,	where $X\sim \pi$. Indeed, applying $K$-subgaussianity, we have	
	$$\mathbb{E}[\|X\|_2] = \int_0^\infty\mathbf{P}[\|X\|_2\ge r]dr\le \int_0^\infty\exp\left(-\frac{r^2}{2K^2}\right)dr = \sqrt{2\pi}K\le 3.$$
\end{proof}

\begin{proof}[Proof of Theorem \ref{thm-sub-ub}]
	First we can write
	\begin{align}\label{eq-sub-polar}
		\KL(p\|q) & = \int_\Omega\int_0^\infty r^{d-1}p(\mx(r, \omega))\log\frac{p(\mx(r, \omega))}{q(\mx(r, \omega))}drd\omega\\
		& = \int_\Omega\int_0^\infty r^{d-1}\left(p(\mx(r, \omega))\log\frac{p(\mx(r, \omega))}{q(\mx(r, \omega))} - p(\mx(r, \omega)) + q(\mx(r, \omega))\right)drd\omega\\
		H^2(p, q) & = \int_\Omega\int_0^\infty r^{d-1}p(\mx(r, \omega))\left(\sqrt{\frac{q(\mx(r, \omega))}{p(\mx(r, \omega))}} - 1\right)^2drd\omega
	\end{align}
	For every $\omega\in\Omega$, we define $r_\omega$ as
	$$r_\omega = \inf\left\{r\colon\log \frac{p(\mx(r_\omega, \omega))}{q(\mx(r_\omega, \omega))}\ge \log 3 + \frac{(3 - \sqrt{K})^2}{2(1-\sqrt{K})^2}\right\}.$$
	We notice that for every $\mx\in \RR^d$, 
	$$p(\mx) = \int_{\RR^d} \pi(\my)\varphi(\mx - \my)d\my\le \frac{1}{\sqrt{2\pi}^d}\int_{\RR^d} \pi(\my)d\my = \frac{1}{\sqrt{2\pi}^d},$$
	and we further have 
	\begin{align*}
		q(\mx) & = \int_{\RR^d}\eta(\my)\varphi(\mx - \my)d\my\ge \int_{\|\my\|_2\le 1}\eta(\my)\varphi(\mx - \my)d\my\\
		&\ge (1 - e^{-1/2})\cdot \min_{\|\my\|_\le 1} \varphi(\mx - \my)\ge \frac{1}{3}\cdot \frac{1}{\sqrt{2\pi}^d}\exp\left(-\frac{(\|\mx\| + 1)^2}{2}\right).
	\end{align*}
	Therefore, for all $\mx$ such that $\|\mx\|_2 < \frac{2}{1 - \sqrt{K}}$, we have
	\begin{equation}\label{eq-def-T}\log \frac{p(\mx(r_\omega, \omega))}{q(\mx(r_\omega, \omega))} < \log 3 + \frac{(3 - \sqrt{K})^2}{2(1-\sqrt{K})^2}\le \log 3 + \frac{18}{(1-K)^2}\le \frac{20}{(1-K)^2}.\end{equation}
	Hence for every $\omega\in \Omega$, we have
	\begin{equation}\label{eq-sub-r}r_\omega \ge \frac{2}{1 - \sqrt{K}} = \frac{2(1 + \sqrt{K})}{1 - K}\ge \frac{2}{1 - K}.\end{equation}

	\par Then according to Lemma \ref{lem-sub-eq1} and Lemma \ref{lem-sub-eq2}, we know that for every $r\ge r_\omega$, we have
	\begin{align*}
		p(\mx(r, \omega)) & \le 7p(\mx(r_\omega, \omega))\exp\left(-\frac{1 - K}{4}r_\omega(r-r_\omega)\right),\\
		\log \frac{p(\mx(r, \omega))}{q(\mx(r, \omega))} & \le T + (r - r_\omega)(3r + 3r_\omega + 24),
	\end{align*}
	where $T$ is defined in \eqref{eq-def-T}. Therefore, we obtain that
	\begin{equation}\label{eq-sub-pr}\begin{aligned}
		&\quad \int_{r_\omega}^\infty r^{d-1}p(\mx(r, \omega))\log \frac{p(\mx(r, \omega))}{q(\mx(r, \omega))}dr\\
		&\le 7p(\mx(r_\omega, \omega))\int_{r_\omega}^\infty r^{d-1}\left(\frac{20}{(1 - K)^2} + (r-r_\omega)(3r + 3r_\omega + 24)\right)\cdot\exp\left(-\frac{1 - K}{4}r_\omega(r-r_\omega)\right)dr.
	\end{aligned}\end{equation}
	We adopt the changes of variables from $r$ to $t = r - r_\omega$, and obtain that
	$$\int_{r_\omega}^\infty r^{d-1}\exp\left(-\frac{1 - K}{4}r_\omega(r-r_\omega)\right)dr =  r_\omega^{d-1}\int_0^\infty\exp\left((d-1)\log\frac{t + r_\omega}{r_\omega} - \frac{(1-K)tr_\omega}{4}\right)dt$$
	and
	\begin{align*}
		&\quad \int_{r_\omega}^\infty r^{d-1}(r-r_\omega)(3r + 3r_\omega + 24)\exp\left(-\frac{1 - K}{4}r_\omega(r-r_\omega)\right)dr\\
		& = r_\omega^{d-1}\int_0^\infty\exp\left((d-1)\log\frac{t + r_\omega}{r_\omega} + \log t + \log (3t + 6r_\omega + 24) - \frac{(1-K)tr_\omega}{4}\right)dt
	\end{align*}
	\par We first bound the first term in \eqref{eq-sub-pr}. We define
	$$f(t) \triangleq (d-1)\log\frac{t + r_\omega}{r_\omega} - \frac{1-K}{8}r_\omega t,$$
	then noticing that $d\le \frac{1}{2(1-K)} + 1$ and hence $8(d - 1)\le \frac{4}{1-K}\le (1-K)r_\omega^2$ holds for $r_\omega\ge \frac{2}{1 - \sqrt{K}}\ge \frac{2}{1-K}$, its derivative satisfies that 
	$$f'(t) = \frac{d-1}{t + r_\omega} - \frac{(1-K)r_\omega}{8}\le \frac{d-1}{r_\omega} - \frac{(1-K)r_\omega}{8}\le 0, \quad \forall t\ge 0.$$
	Therefore, for every $t\ge 0$, we have
	$$(d-1)\log\frac{t + r_\omega}{r_\omega} - \frac{1-K}{8}r_\omega t = f(t)\le f(0) = 0,$$
	which indicates that
	\begin{equation}\label{eq-sub-q}\quad \frac{1}{r_\omega^{d-1}}\int_{r_\omega}^\infty r^{d-1}\exp\left(-\frac{1 - K}{4}r_\omega(r-r_\omega)\right)dr \le \int_0^\infty\exp\left(- \frac{1-K}{8}r_\omega t\right)dt = \frac{8}{(1-K)r_\omega}\end{equation}
	\par We next bound the second term in \eqref{eq-sub-pr}. We define
	$$g(t)\triangleq \log t + \log (3t + 6r_\omega + 24) - \frac{1-K}{16}r_\omega t,$$
	then we have
	$$g'(t) = \frac{1}{t} + \frac{1}{3t + 6r_\omega + 24} - \frac{1-K}{16}r_\omega,$$
	which has a single root $t_0$ on $(0, \infty)$. And we have $g(t)\le g(t_0)$ holds for all $t\ge 0$. We further have 
	$$0 = g'(t_0) = \frac{1}{t_0} + \frac{1}{3t_0 + 6r_\omega + 24} - \frac{1-K}{16}r_\omega\le \frac{4}{3t_0} - \frac{1-K}{16}r_\omega,$$
	which indicates that $t_0\le \frac{64}{3(1-K)r_\omega}$. Next noticing that $r_\omega\ge \frac{2}{1 - \sqrt{K}}$, we have $(1-K)r_\omega \ge 2(1 + \sqrt{K})\ge 2$, hence we get
	$$3t_0 + 6r_\omega + 24\le \frac{64}{(1-K)r_\omega} + 6r_\omega + 24\le 6r_\omega + 56\le 34r_\omega.$$
	Therefore for all $t\ge 0$,
	$$g(t)\le g(t_0)\le \log t_0(3t_0 + 6r_\omega + 24) - \frac{4}{3}\le \log \frac{2176}{3(1-K)} - \frac{4}{3},$$
	Combine this result with our previous estimation on $f$, we get
	\begin{align*}
		&\quad (d-1)\log\frac{t + r_\omega}{r_\omega} + \log t + \log (3t + 6r_\omega + 24) - \frac{1-K}{4}r_\omega t\\
		& = f(t) + g(t) - \frac{1-K}{16}r_\omega t\le \log \frac{2176}{3(1-K)} - \frac{4}{3} - \frac{(1-K)r_\omega t}{16}.
	\end{align*}
	Hence we obtain
	\begin{align*}
		&\quad \int_{r_\omega}^\infty r^{d-1}(r-r_\omega)(3r + 3r_\omega + 24)\exp\left(-\frac{1 - K}{4}r_\omega(r-r_\omega)\right)dr\\
		&\le r_\omega^{d-1}\int_0^\infty \exp\left(\log \frac{2176}{3(1-K)} - \frac{4}{3} - \frac{(1-K)r_\omega t}{16}\right)dt = \frac{2176e^{-4/3}}{3(1-K)}r_\omega^{d-1}\cdot \frac{16}{(1-K)r_\omega}\le \frac{3100r_\omega^{d-2}}{(1-K)^2}.
	\end{align*}
	Therefore, according to \eqref{eq-sub-pr} we have
	\begin{align*}
		&\quad \int_{r_\omega}^\infty r^{d-1}p(\mx(r, \omega))\log \frac{p(\mx(r, \omega))}{q(\mx(r, \omega))}dr\le  \left(\frac{160r_\omega^{d-2}}{(1-K)^3} + \frac{3100r_\omega^{d-2}}{(1-K)^2}\right)\cdot 7p(\mx(r_\omega, \omega))\le \frac{23000r_\omega^{d-2}p(\mx(r_\omega, \omega))}{(1-K)^3},
	\end{align*}
	\par Similarly, we can also obtain bound on $\int_{r_\omega}^\infty r^{d-1}q(\mx(r, \omega))dr$: According to Lemma \ref{lem-sub-eq1} we obtain that
	$$q(\mx(r, \omega)) \le 7q(\mx(r_\omega, \omega))\exp\left(-\frac{1 - K}{4}r_\omega(r - r_\omega)\right),$$
	which indicates that
	$$\int_{r_\omega}^\infty r^{d-1}q(\mx(r, \omega))dr\le q(\mx(r_\omega, \omega))\int_{r_\omega}^\infty r^{d-1}\exp\left(-\frac{1 - K}{4}r_\omega(r - r_\omega)\right)dr.$$
	According to \eqref{eq-sub-q}, we get
	$$\int_{r_\omega}^\infty r^{d-1}\exp\left(-\frac{1 - K}{4}r_\omega(r - r_\omega)\right)dr\le \frac{8r_\omega^{d-2}}{1 - K},$$
	which indicates that
	$$\int_{r_\omega}^\infty r^{d-1}q(\mx(r, \omega))dr\le \frac{56r_\omega^{d-2}q(\mx(r_\omega, \omega))}{1-K}$$
	Next noticing $\log \frac{p(\mx(r_\omega, \omega))}{q(\mx(r_\omega, \omega))} = \log 3 + \frac{(3 - \sqrt{K})^2}{2(1-\sqrt{K})^2}\ge 0$. we have $q(\mx(r_\omega, \omega))\le p(\mx(r_\omega, \omega))$. Therefore, we have
	$$\int_{r_\omega}^\infty r^{d-1}q(\mx(r, \omega))dr\le \frac{56r_\omega^{d-2}p(\mx(r_\omega, \omega))}{1-K}\le \frac{56r_\omega^{d-2}p(\mx(r_\omega, \omega))}{(1-K)^3}.$$
	Therefore, we have the following upper bound 
	\begin{align*}
	&\quad \int_\Omega\int_{r_\omega}^\infty r^{d-1}\left(p(\mx(r, \omega))\log\frac{p(\mx(r, \omega))}{q(\mx(r, \omega))} - p(\mx(r, \omega)) + q(\mx(r, \omega))\right)drd\omega\\
	& \le \int_\Omega\int_{r_\omega}^\infty r^{d-1}p(\mx(r, \omega))\log\frac{p(\mx(r, \omega))}{q(\mx(r, \omega))}drd\omega + \int_\Omega\int_{r_\omega}^\infty r^{d-1}q(\mx(r, \omega))drd\omega\\
	& \le \int_\Omega\frac{23056r_\omega^{d-2}p(\mx(r_\omega, \omega))}{(1-K)^3}d\omega.
	\end{align*}

	\par Next, according to \eqref{lem-sub-eq2}, we notice that for any $\omega\in \Omega$ and $0\le r\le r_\omega$ we have 
	$$\nabla \log\frac{p(\mx(r, \omega))}{q(\mx(r, \omega))}\le 6r + 24\le 6r_\omega + 24.$$
	According to our choice of $r_\omega$, we have $\log\frac{p(\mx(r_\omega, \omega))}{q(\mx(r_\omega, \omega))} = \log 3 + \frac{(3 - \sqrt{K})^2}{2(1-\sqrt{K})^2}$. Hence noticing that $r_\omega\ge 2$ for every $\omega\in \Omega$ according to \eqref{eq-sub-r}, we have
	for any $r_\omega - \frac{1}{18r_\omega}\le r\le r_\omega$, 
	$$\log\frac{p(\mx(r, \omega))}{q(\mx(r, \omega))}\ge \log 3 + \frac{(3 - \sqrt{K})^2}{2(1-\sqrt{K})^2} - \frac{6r_\omega + 24}{18r_\omega}\ge 1 + 2 - 1 = 2,$$
	which indicates that
	$$\left(\sqrt{\frac{q(\mx(r, \omega))}{p(\mx(r, \omega))}} - 1\right)^2\ge \left(\frac{1}{e^1} - 1\right)^2\ge \frac{1}{3}.$$
	Additionally, according to Lemma \ref{lem-eq}, for every $r_\omega - \frac{1}{18r_\omega}\le r\le r_\omega$, we have $p(\mx(r, \omega))\ge \frac{p(\mx(r_\omega, \omega))}{7}$. We further adopt the assumption $d\le \frac{1}{2(1-K)} + 1$ and also use \eqref{eq-sub-r} to get
	$$r^{d-1}\ge r_\omega^{d-1}\cdot \left(1 - \frac{1}{18r_\omega^2}\right)^{d-1}\ge r_\omega^{d-1}\cdot \left(1 - \frac{d-1}{18r_\omega^2}\right)\ge r_\omega^{d-1}\left(1 - \frac{1}{36}\right)\ge \frac{1}{2}r_\omega^{d-1}.$$
	Therefore, we have
	\begin{align*}
		&\quad \int_\Omega\int_{r_\omega - \frac{1}{18r_\omega}}^{r_\omega}r^{d-1}p(\mx(r, \omega))\cdot \left(\sqrt{\frac{q(\mx(r, \omega))}{p(\mx(r, \omega))}} - 1\right)^2drd\omega\\
		& \ge \int_\Omega\frac{1}{18r_\omega}\cdot \frac{1}{2}r_\omega^{d-1}\cdot p(\mx(r_\omega, \omega))\cdot \frac{1}{2}\ge \frac{r_\omega^{d-2}p(\mx(r_\omega, \omega))}{72}d\omega\\
		& \ge \frac{(1-K)^3}{1660032}\int_\Omega\int_{r_\omega}^\infty r^{d-1}\left(p(\mx(r, \omega))\log\frac{p(\mx(r, \omega))}{q(\mx(r, \omega))} - p(\mx(r, \omega)) + q(\mx(r, \omega))\right)drd\omega.
	\end{align*}
	Therefore, according to \eqref{eq-sub-polar}, we have
	\begin{align*}
		H^2(p, q) & = \int_\Omega\int_0^\infty r^{d-1}p(\mx(r, \omega))\left(\sqrt{\frac{q(\mx(r, \omega))}{p(\mx(r, \omega))}} - 1\right)^2drd\omega\\
		& \ge \int_\Omega\int_{r_\omega - \frac{1}{r_\omega + M}}^{r_\omega} r^{d-1}p(\mx(r, \omega))\left(\sqrt{\frac{q(\mx(r, \omega))}{p(\mx(r, \omega))}} - 1\right)^2drd\omega\\
		& \ge \frac{(1-K)^3}{1660032}\int_\Omega\int_{r_\omega}^\infty r^{d-1}\left(p(\mx(r, \omega))\log\frac{p(\mx(r, \omega))}{q(\mx(r, \omega))} - p(\mx(r, \omega)) + q(\mx(r, \omega))\right)drd\omega.
	\end{align*}

	\par Next we consider those $\mx(r, \omega)$ with $0\le r\le r_\omega$. According to the definition of $r_\omega$, we know that for any such $r$, we have 
	$$\log\frac{p(\mx(r, \omega))}{q(\mx(r, \omega))}\le  \log 3 + \frac{(3 - \sqrt{K})^2}{2(1-\sqrt{K})^2}\le \frac{20}{(1-K)^2}.$$
	According to Lemma \ref{lem-formula}, we have
	\begin{align*}
		\frac{p(\mx(r, \omega))}{q(\mx(r, \omega))}\log\frac{p(\mx(r, \omega))}{q(\mx(r, \omega))} - \frac{p(\mx(r, \omega))}{q(\mx(r, \omega))} + 1 & \le \frac{22.5}{(1-K)^2}\left(\sqrt{\frac{p(\mx(r, \omega))}{q(\mx(r, \omega))}} - 1\right)^2\\
		& \le \frac{24}{(1-K)^2}\left(\sqrt{\frac{p(\mx(r, \omega))}{q(\mx(r, \omega))}} - 1\right)^2.
	\end{align*}
	Hence we obtain that
	\begin{align*}
		&\quad\int_0^{r_\omega} r^{d-1}\left(p(\mx(r, \omega))\log\frac{p(\mx(r, \omega))}{q(\mx(r, \omega))} - p(\mx(r, \omega)) + q(\mx(r, \omega))\right)dr\\
		& = \int_0^{r_\omega} r^{d-1}q(\mx(r, \omega))\cdot\left(\frac{p(\mx(r, \omega))}{q(\mx(r, \omega))}\log\frac{p(\mx(r, \omega))}{q(\mx(r, \omega))} - \frac{p(\mx(r, \omega))}{q(\mx(r, \omega))} + 1\right)dr\\
		& \le \int_0^{r_\omega} r^{d-1}q(\mx(r, \omega))\cdot \frac{24}{(1-K)^2}\left(\sqrt{\frac{p(\mx(r, \omega))}{q(\mx(r, \omega))}} - 1\right)^2dr.
	\end{align*}
	Therefore, we have
	\begin{align*}
		&\quad \int_\Omega\int_0^{r_\omega} r^{d-1}\left(p(\mx(r, \omega))\log\frac{p(\mx(r, \omega))}{q(\mx(r, \omega))} - p(\mx(r, \omega)) + q(\mx(r, \omega))\right)drd\omega\\
		&\le \frac{24}{(1-K)^2}\int_\Omega\int_0^{r_\omega} r^{d-1}q(\mx(r, \omega))\cdot\left(\sqrt{\frac{p(\mx(r, \omega))}{q(\mx(r, \omega))}} - 1\right)^2drd\omega\\
		& \le \frac{24}{(1-K)^2}\int_\Omega\int_0^{\infty} r^{d-1}q(\mx(r, \omega))\cdot\left(\sqrt{\frac{p(\mx(r, \omega))}{q(\mx(r, \omega))}} - 1\right)^2drd\omega = \frac{24}{(1-K)^2}M^2H^2(p, q).
	\end{align*}
	\par Combine these two analysis together, we obtain that
	\begin{align*}
		\KL(p\|q) & = \int_\Omega\int_0^\infty r^{d-1}\left(p(\mx(r, \omega))\log\frac{p(\mx(r, \omega))}{q(\mx(r, \omega))} - p(\mx(r, \omega)) + q(\mx(r, \omega))\right)drd\omega\\
		& = \int_\Omega\int_0^{r_\omega} r^{d-1}\left(p(\mx(r, \omega))\log\frac{p(\mx(r, \omega))}{q(\mx(r, \omega))} - p(\mx(r, \omega)) + q(\mx(r, \omega))\right)drd\omega\\
		&\quad + \int_\Omega\int_{r_\omega}^\infty r^{d-1}\left(p(\mx(r, \omega))\log\frac{p(\mx(r, \omega))}{q(\mx(r, \omega))} - p(\mx(r, \omega)) + q(\mx(r, \omega))\right)drd\omega\\
		& \le \frac{24}{(1-K)^2} H^2(p, q) + \frac{1660032}{(1-K)^3} H^2(p, q) = \frac{1660056}{(1-K)^3} H^2(p, q).
	\end{align*}
	This completes the proof of Theorem \ref{thm-sub-ub}.
\end{proof}

\section{Proof of Theorem \ref{thm-sub-ub-nd}} \label{app-5}
Applying \prettyref{eq:HO} with
$\PP = f_\pi$ and $\SSS = \QQ = f_\eta$, as long as \eqref{eq-condition} holds we get
\begin{equation}\label{eq-ho-sub}
\KL(f_\pi\|f_\eta)\le \frac{4\log(1/\delta)}{(1-\delta)^2}H^2(f_\pi, f_\eta) + \frac{4\delta\log(1/\delta)}{(1-\delta)^2} + \delta^{\frac{\lambda - 1}{2}}\cdot\int_{\RR^d}\frac{f_\pi(\mx)^\lambda}{f_\eta(\mx)^{\lambda - 1}}d\mx.
\end{equation}
Notice that the last term is an $f_\lambda$-divergence $D_{f_\lambda}$ with $f_\lambda(x) = x^\lambda$, which is a convex function for $\lambda\ge 1$, hence $\int_{\RR^d}\frac{f_\pi(\mx)^\lambda}{f_\eta(\mx)^{\lambda - 1}}d\mx$ is convex in $(f_\pi, f_\eta)$. Therefore, by Jensen's inequality we have
\begin{align*}
	\int_{\RR^d}\frac{f_\pi(\mx)^\lambda}{f_\eta(\mx)^{\lambda - 1}}d\mx & = D_{f_\lambda}(\mathbb{E}[\delta_X * \mathcal{N}(0, I_d)]\|\mathbb{E}[\delta_{X'} * \mathcal{N}(0, I_d)])\\
	& \le \mathbb{E}[D_{f_\lambda}(\mathcal{N}(X, I_d)\|\mathcal{N}(X', I_d))]\\
	& = \mathbb{E}\left[\exp\left(\frac{\lambda(\lambda - 1)}{2}\|X - X'\|_2^2\right)\right]
\end{align*}
for any possible coupling between $(X, X')$ where $X\sim \pi, X'\sim \eta$.
\par Now according to the definition of subgaussian distributions, we have
$$\mathbf{P}[\|X\|_2\ge t]\le \exp\left(-\frac{t^2}{2K^2}\right),\quad  \mathbf{P}[\|X'\|_2\ge t]\le \exp\left(-\frac{t^2}{2K^2}\right), \quad \forall t\ge 0,$$
which indicates that $\mathbf{P}[\|X\|_2\ge 2K], \mathbf{P}[\|X'\|_2\ge 2K] < \frac{1}{2}$. Therefore, we can construct the coupling between $(X, X')$ so that if $\|X'\|_2\ge 2K$ we always have $\|X\|_2 < 2K$, and if $\|X\|_2\ge 2K$ we always have $\|X'\|_2 < 2K$. And we have
\begin{align*}
	&\quad\mathbb{E}\left[\exp\left(\frac{\lambda(\lambda - 1)}{2}\|X - X'\|_2^2\right)\right]\\
	& = \mathbb{E}\left[\exp\left(\frac{\lambda(\lambda - 1)}{2}\|X - X'\|_2^2\right)\mathbf{1}_{\|X\|_2\ge 2K}\right] + \mathbb{E}\left[\exp\left(\frac{\lambda(\lambda - 1)}{2}\|X - X'\|_2^2\right)\mathbf{1}_{\|X\|_2 <  2K}\right]\\
	& \le \mathbb{E}\left[\exp\left(\frac{\lambda(\lambda - 1)}{2}\|X - X'\|_2^2\right)\mathbf{1}_{\|X'\|_2 < 2K}\right] + \mathbb{E}\left[\exp\left(\frac{\lambda(\lambda - 1)}{2}\|X - X'\|_2^2\right)\mathbf{1}_{\|X\|_2 <  2K}\right]\\
	& \le \mathbb{E}\left[\exp\left(\frac{\lambda(\lambda - 1)}{2}(\|X\|_2+2K)^2\right)\right] + \mathbb{E}\left[\exp\left(\frac{\lambda(\lambda - 1)}{2}(\|X'\|_2+2K)^2\right)\right]\\
	& \le \mathbb{E}\left[\exp\left(\lambda(\lambda - 1)\left(\|X\|_2^2 + 4K^2\right)\right)\right] + \mathbb{E}\left[\exp\left(\lambda(\lambda - 1)\left(\|X'\|_2^2 + 4K^2\right)\right)\right]
\end{align*}
for any $\delta > 0$.
\par Since $\pi$ and $\eta$ are $K$-subgaussian distributions, we have,
\begin{align*}
	&\quad \mathbb{E}\left[\exp\left(\frac{\|X\|_2^2}{4K^2}\right)\right] = \int_{0}^\infty \exp\left(\frac{t^2}{4K^2}\right)d\pi[\|X\|\le  t]\le \int_{0}^\infty \exp\left(\frac{t^2}{4K^2}\right)\cdot \frac{t}{K^2}\exp\left(-\frac{t^2}{2K^2}\right)dt\\
	& = \int_0^\infty \exp\left(-\frac{t^2}{4K^2}\right)d\left(\frac{t^2}{2K^2}\right) = 2,
\end{align*}
and similarly we also have
$$\mathbb{E}\left[\exp\left(\frac{\|X'\|_2^2}{4K^2}\right)\right]\le 2.$$
We choose $\lambda = \frac{1 + \sqrt{1 + 1/K^2}}{2}$, and we have $\lambda(\lambda - 1) = \frac{1}{4K^2}$. Hence we get
$$\mathbb{E}\left[\exp\left(\frac{\lambda(\lambda - 1)}{2}\|X - X'\|_2^2\right)\right]\le \left(\mathbb{E}\left[\exp\left(\frac{\|X\|_2^2}{4K^2}\right)\right] + \mathbb{E}\left[\exp\left(\frac{\|X'\|_2^2}{4K^2}\right)\right]\right)\cdot \exp(1)\le 4e\le 12.$$
\par Therefore, according to \eqref{eq-ho-sub}, as long as \eqref{eq-condition} holds we get
$$\KL(f_\pi\|f_\eta)\le \frac{4\log(1/\delta)}{(1-\delta)^2}H^2(f_\pi, f_\eta) + \frac{4\delta\log(1/\delta)}{(1-\delta)^2} + 12\delta^{\frac{\lambda - 1}{2}}.$$
Choosing $\delta = \left(\frac{H^2(f_\pi, f_\eta)}{4}\right)^{\frac{8}{(\lambda - 1)^2}\vee 1}$, and we will get
$$\delta\le \frac{H^2(f_\pi, f_\eta)}{4}\le \frac{1}{2}\quad \text{ and }\quad \delta^{\frac{\lambda - 1}{2}}\le H^2(f_\pi, f_\eta).$$
Therefore, the first inequality in \eqref{eq-condition} holds. As for the second one, we notice that if $\lambda\ge 2$, then $\frac{\lambda - 1}{2}\ge \frac{1}{2}$ and it always holds. For $1 < \lambda\le 2$, we have $\frac{8}{(\lambda - 1)^2}\vee 1 = \frac{8}{(\lambda - 1)^2}$, and hence $\log\frac{1}{\delta}\ge \frac{8\log(2)}{(\lambda - 1)^2}\ge \frac{4}{(\lambda - 1)^2} > 4.$ Since $\frac{\log t}{t}$ is decreasing for $t\ge 4$, we have $\frac{\log\log(1/\delta)}{\log(1/\delta)}\le \frac{\log(4/(\lambda - 1)^2)}{4/(\lambda - 1)^2}$. Moreover, using the fact that $x > 2\log x$ holds for all $x\ge 0$, we have $\frac{\lambda - 1}{2}\cdot \frac{4}{(\lambda - 1)^2} - \log \frac{4}{(\lambda - 1)^2}= \frac{2}{\lambda - 1} - 2\log\frac{2}{\lambda - 1} > 0.$
Hence we get $\frac{\log\log(1/\delta)}{\log(1/\delta)}\le \frac{\lambda - 1}{2}$, which proves that the second inequality in \eqref{eq-condition} always holds.

	Therefore, noticing that $(1 - \delta)^2\ge \frac{1}{4}$ and also 
\begin{align*}
	\log(1/\delta)&\le \left(\frac{8}{(\lambda - 1)^2}\vee 1\right)\log\frac{4}{H^2(f_\pi, f_\eta)} = (32K^2(K+\sqrt{K^2+1})^2\vee 1)\log\frac{4}{H^2(f_\pi, f_\eta)}\\
	&\le (512K^4 + 32)\log\frac{4}{H^2(f_\pi, f_\eta)},
\end{align*}
we get
\begin{align*}
	\KL(f_\pi\|f_\eta)\le (10240K^4+652)H^2(f_\pi, f_\eta)\log\frac{4}{H^2(f_\pi, f_\eta)}.
\end{align*}

\section{Comparison inequalities for other distances}\label{sec:other_dist}

In this appendix, we discuss comparison inequalities for other popular distances between densities, namely, the $\chi^2$-divergence, the TV distance, and the $L_2$ distance.

\par First we presents the results of $\chi^2\lesssim H^2$, where $\chi^2(f\|g) = \int \frac{(f - g)^2}{g}$.
\begin{theorem}\label{thm-chi2}
	For $d$-dimensional distributions $\pi, \eta$ supported on $B_2(M)$ with $M\ge 2$, we have
$$\chi^2(f_\pi\|f_\eta)\le 2\exp\left(50(M^2\vee d)\right) H^2(f_\pi, f_\eta).$$
\end{theorem}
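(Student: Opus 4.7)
The plan is to mirror the proof of Theorem~\ref{thm-KL-d} (see Appendix~\ref{app-1}), with the KL-specific comparison of Lemma~\ref{lem-formula} replaced by the elementary inequality
\[
(x-1)^2 \le 4L\,(\sqrt{x}-1)^2 \qquad \text{whenever } x \in [0, L],\ L \ge 1,
\]
which follows from the factorisation $(x-1)^2 = (\sqrt{x}+1)^2(\sqrt{x}-1)^2$ together with $(\sqrt{x}+1)^2 \le (\sqrt{L}+1)^2 \le 4L$. As in Theorem~\ref{thm-KL-d} one may assume without loss of generality that $d \le M^2$. The key global input is the bound $\int f_\pi^2/f_\eta\, d\mx \le e^{4M^2}$, which follows from the convexity of $(f_\pi, f_\eta) \mapsto \int f_\pi^2/f_\eta$ and the exact Gaussian computation $\int \varphi(\mx-\mmu)^2/\varphi(\mx-\nu)\,d\mx = e^{\|\mmu-\nu\|_2^2} \le e^{4M^2}$.

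Writing the $\chi^2$-integrand in its Hellinger form and passing to polar coordinates,
\[
\chi^2(f_\pi\|f_\eta) = \int_\Omega \int_0^\infty r^{d-1}\, \bigl(\sqrt{f_\pi} - \sqrt{f_\eta}\bigr)^2\, \bigl(1 + \sqrt{f_\pi/f_\eta}\bigr)^2\, dr\, d\omega,
\]
for each direction $\omega$ define the stopping radius $r_\omega \triangleq \inf\{r : f_\pi(\mx(r,\omega))/f_\eta(\mx(r,\omega)) \ge L\}$ with threshold $L \triangleq e^{50(M^2\vee d)}/4$. The argument then splits at $r_\omega$.

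\emph{Bulk} ($r \le r_\omega$): on this region $f_\pi/f_\eta \le L$, so the comparison above gives $\int_0^{r_\omega} r^{d-1}(f_\pi-f_\eta)^2/f_\eta\, dr \le 4L \int_0^{r_\omega} r^{d-1}(\sqrt{f_\pi}-\sqrt{f_\eta})^2\, dr$, and summing over $\omega$ bounds the bulk contribution by $4L \cdot H^2(f_\pi, f_\eta) = e^{50(M^2\vee d)}\, H^2(f_\pi, f_\eta)$. \emph{Tail} ($r > r_\omega$): Lemma~\ref{lem-eq} applied to $f_\pi$ and the pointwise estimate $f_\eta(\mx) \ge (2\pi)^{-d/2} e^{-(\|\mx\|+M)^2/2}$ yield
\[
\frac{f_\pi(\mx)^2}{f_\eta(\mx)} \le (2\pi)^{-d/2}\, e^{4M^2}\, e^{-(\|\mx\|-3M)^2/2} \qquad (\|\mx\| \ge M),
\]
whose spherical integral beyond $r_\omega$ is a Gaussian-type tail, exponentially small once $r_\omega \gtrsim 3M$. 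As in the KL proof, a shell of width $\Theta(1/(r_\omega+M))$ just inside $r_\omega$ keeps $f_\pi/f_\eta$ of order $L$, giving the lower bound $H^2(f_\pi, f_\eta) \gtrsim r_\omega^{d-1} f_\pi(\mx(r_\omega,\omega))/(r_\omega-M)$. Matching the tail bound against this estimate shows that the tail contribution is at most a constant multiple of $H^2$; combined with the bulk estimate this gives the stated inequality.

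\emph{Main obstacle.} The replacement of Lemma~\ref{lem-formula} pays a multiplicative factor of order $L$ rather than $M^2$, because $(\sqrt{x}+1)^2 \asymp x$ for large $x$ whereas in the KL case the analogous ratio $(x\log x - x + 1)/(\sqrt{x}-1)^2$ stays of order $\log x$. This forces the threshold $L$ (and hence the constant in the theorem) to be exponential rather than polynomial in $M^2\vee d$. The delicate step is verifying that the tail contribution is genuinely dominated by $H^2$ on the thin shell near $r_\omega$, which requires the same pointwise estimates (Lemmas~\ref{lem-eq} and~\ref{lem-eq2}) used for the KL bound together with the global bound $\int f_\pi^2/f_\eta \le e^{4M^2}$.
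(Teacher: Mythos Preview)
Your bulk argument (the region $r\le r_\omega$) is essentially the paper's, and the elementary inequality $(x-1)^2\le 4L(\sqrt x-1)^2$ for $x\in[0,L]$ is exactly the replacement for Lemma~\ref{lem-formula} that the paper uses (with $L=e^{25M^2}$ there). The gap is in your tail argument.

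The pointwise bound
\[
\frac{f_\pi(\mx)^2}{f_\eta(\mx)}\le (2\pi)^{-d/2}e^{4M^2}e^{-(\|\mx\|-3M)^2/2}
\]
is correct but too crude: it uses the \emph{worst-case} lower bound $f_\eta(\mx)\ge (2\pi)^{-d/2}e^{-(\|\mx\|+M)^2/2}$, and when $f_\eta$ is actually much larger than this, your bound overshoots by a factor that grows with $r_\omega$. Concretely, in one dimension take $\pi=\epsilon\delta_M+(1-\epsilon)\delta_{-M}$ and $\eta=\delta_{-M}$. Then $p/q=\epsilon e^{2Mr}+(1-\epsilon)$, so $r_\omega\approx \tfrac{1}{2M}\log(L/\epsilon)\to\infty$ as $\epsilon\to0$. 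The shell lower bound gives $H^2\gtrsim p(r_\omega)/(r_\omega+M)=Lq(r_\omega)/(r_\omega+M)=L\varphi(r_\omega+M)/(r_\omega+M)$, whereas your absolute tail bound gives tail $\lesssim e^{4M^2}e^{-(r_\omega-3M)^2/2}/(r_\omega-3M)$. The ratio is $\asymp e^{4M(r_\omega-M)}/L$, which is unbounded. (The \emph{actual} tail in this example carries an extra factor $\epsilon^2$, which cancels against $e^{4Mr_\omega}=(L/\epsilon)^2$ to give the desired ratio $\asymp L$; your absolute bound discards precisely this $\epsilon^2$.) The global input $\int f_\pi^2/f_\eta\le e^{4M^2}$ does not rescue this, since it gives an unconditional bound on $\chi^2$ rather than one scaling with $H^2$.

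What the paper does instead is prove a new ingredient, Lemma~\ref{lem-chi2}: for $r\ge r_\omega\ge M$,
\[
\frac{p(\mx(r,\omega))}{q(\mx(r,\omega))}\le \frac{p(\mx(r_\omega,\omega))}{q(\mx(r_\omega,\omega))}\,e^{2M(r-r_\omega)},
\]
obtained by combining the upper bound of Lemma~\ref{lem-eq} on $p$ with the matching \emph{lower} bound $q(\mx(r,\omega))\ge q(\mx(r_\omega,\omega))e^{-((r+M)^2-(r_\omega+M)^2)/2}$. This keeps the actual value $q(r_\omega)$ in the estimate rather than a worst-case surrogate, so the tail integrand becomes $\le q(r_\omega)\,e^{50M^2}e^{4M(r-r_\omega)}e^{-((r-M)^2-(r_\omega-M)^2)/2}$; the Gaussian decay now beats the $e^{4M(r-r_\omega)}$ drift and the tail integral is $\lesssim e^{50M^2}\,r_\omega^{d-1}q(r_\omega)/(r_\omega-5M)$, which matches the shell bound $\gtrsim r_\omega^{d-1}q(r_\omega)/(r_\omega-5M)$. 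In short, the missing idea is to control the \emph{ratio} $p/q$ along each ray past $r_\omega$, not $p^2/q$ absolutely.
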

\par Next, we show that for one-dimensional Gaussian mixtures where the mixing distribution is compact supported, TV distance and $L_2$ distance are close to each other up to log factors.

\begin{theorem}\label{thm-l2-ub}
	Suppose $\pi$ and $\eta$ are one-dimensional distributions supported on $ [-M, M]$ with $M\ge 1$. Then we have
	$$\TV(f_\pi, f_\eta)\le \left(8\sqrt{M} + 2\log^{1/4}\frac{1}{\|f_\pi- f_\eta\|_2}\right)\|f_\pi - f_\eta\|_2$$
\end{theorem}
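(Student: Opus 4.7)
}

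Write $\epsilon = \|f_\pi - f_\eta\|_2$. The plan is a truncation-plus-Cauchy--Schwarz argument exploiting the Gaussian convolution structure. I would pick a threshold $T > M$ (to be optimized), split
\begin{equation*}
2\,\TV(f_\pi,f_\eta) = \int_{-T}^{T}|f_\pi-f_\eta|\,dx + \int_{|x|>T}|f_\pi-f_\eta|\,dx,
\end{equation*}
and apply Cauchy--Schwarz on an interval of length $2T$ to bound the bulk by $\sqrt{2T}\,\epsilon$. The tail is then handled using the compact support of the mixing distributions.

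Concretely, since $\supp(\pi),\supp(\eta)\subset [-M,M]$ and $|x-z|\ge |x|-M$ for $z\in [-M,M]$ and $|x|>M$, one has the pointwise envelope $f_\pi(x), f_\eta(x) \le \tfrac{1}{\sqrt{2\pi}}\exp(-(|x|-M)^2/2)$ for $|x|>M$. Combining with the standard Mills bound $\int_a^\infty \tfrac{1}{\sqrt{2\pi}} e^{-u^2/2}\,du \le \tfrac{1}{2}e^{-a^2/2}$, the tail term is at most $2\exp(-(T-M)^2/2)$. Putting the two pieces together gives
\begin{equation*}
\TV(f_\pi,f_\eta) \le \tfrac{\sqrt{2T}}{2}\,\epsilon + \exp\!\bigl(-(T-M)^2/2\bigr).
\end{equation*}

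Finally, I would choose $T-M = \sqrt{2\log(1/\epsilon)}$ to balance the two terms, so that the tail equals $\epsilon$. Using $\sqrt{a+b}\le \sqrt a + \sqrt b$ gives $\sqrt{2T}\le \sqrt{2M} + 2^{3/4}\log^{1/4}(1/\epsilon)$, yielding $\TV(f_\pi,f_\eta)\le \tfrac{\sqrt{2M}}{2}\epsilon + 2^{-1/4}\log^{1/4}(1/\epsilon)\epsilon + \epsilon$. Since $M\ge 1$, the residual $\epsilon$ is absorbed into the $8\sqrt{M}\cdot\epsilon$ term with plenty of slack, and the two leading coefficients are well within $8\sqrt M$ and $2$. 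The case $\epsilon \ge 1$ is trivial (use $\TV\le 1\le 8\sqrt{M}\,\epsilon$), and the case where $\log(1/\epsilon)$ is very small is similarly absorbed by $M\ge 1$. There is no real obstacle here; the only bookkeeping to watch is the constants in the two balancing estimates.
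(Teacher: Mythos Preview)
Your proposal is correct and follows essentially the same argument as the paper: truncate at a level $T>M$, bound the bulk by Cauchy--Schwarz, bound the tail via the Gaussian envelope $f_\pi(x),f_\eta(x)\le \tfrac{1}{\sqrt{2\pi}}e^{-(|x|-M)^2/2}$, and choose $T-M=\sqrt{2\log(1/\epsilon)}$. The only cosmetic difference is that the paper uses the Mill's-ratio tail bound $\bar\Phi(a)\le \tfrac{1}{a\sqrt{2\pi}}e^{-a^2/2}$ (and then checks $t\ge M+\tfrac13$ via $\|p-q\|_2^2\le 4/5$) where you use the Chernoff-type bound $\bar\Phi(a)\le \tfrac12 e^{-a^2/2}$, leading to slightly different but equally valid constants.
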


\begin{theorem}\label{thm-l2-lb}
	For any one-dimensional distributions $\pi, \eta$ 
 (that need not be compactly supported), 
	$$\|f_\pi - f_\eta\|_2\le \left(\log^{1/4}\frac{1}{\TV(f_\pi, f_\eta)}\vee 3\right)\TV(f_\pi, f_\eta).$$
\end{theorem}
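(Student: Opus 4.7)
The plan is to work on the Fourier side. Writing $g=f_\pi-f_\eta$, the convolution structure $f_\pi=\pi*\varphi$ gives $\hat g(\xi)=(\hat\pi(\xi)-\hat\eta(\xi))e^{-\xi^2/2}$. Since $\pi,\eta$ are probability measures, $|\hat\pi|,|\hat\eta|\le 1$, so the pointwise bound $|\hat g(\xi)|\le 2e^{-\xi^2/2}$ holds. On the other hand, $|\hat g(\xi)|\le \|g\|_1=2\TV$ where I write $\TV=\TV(f_\pi,f_\eta)$ for brevity. Parseval's identity then yields
$$\|g\|_2^2=\frac{1}{2\pi}\int|\hat g(\xi)|^2 d\xi\le\frac{2}{\pi}\int_{-\infty}^\infty\min\{\TV^2,e^{-\xi^2}\}d\xi.$$

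The key step is to split the integral at the crossover frequency $R=\sqrt{2\log(1/\TV)}$, at which $e^{-R^2}=\TV^2$ (assuming $\TV<1$). On $|\xi|\le R$ I use the $\TV$-bound, contributing $2R\TV^2$; on $|\xi|>R$ the Gaussian decay together with the standard estimate $\int_R^\infty e^{-\xi^2}d\xi\le e^{-R^2}/(2R)=\TV^2/(2R)$ contributes only a lower-order term. Assembling the pieces gives
$$\|g\|_2^2\le\frac{2\TV^2}{\pi}\Bigl(2R+\frac{1}{R}\Bigr),$$
so when $R\ge 1$ this yields $\|g\|_2\le C\cdot\TV\cdot\log^{1/4}(1/\TV)$ for an explicit constant $C$, matching the desired scaling.

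To cover the complementary regime in which $\TV$ is not small (so $R$ is below $1$ or the $\log^{1/4}$ factor drops beneath $3$), I would invoke the trivial $L^\infty$ bound: every Gaussian mixture $f$ satisfies $\|f\|_\infty\le 1/\sqrt{2\pi}$, so $\|g\|_\infty\le\sqrt{2/\pi}$ and hence $\|g\|_2^2\le\|g\|_\infty\|g\|_1\le 2\sqrt{2/\pi}\,\TV$, which is dominated by $9\TV^2$ once $\TV$ is bounded below by a small constant. Gluing the two regimes together yields the stated inequality, with the $\vee 3$ clause absorbing the crossover. The main obstacle I anticipate is tuning constants cleanly through the Fourier split so that the coefficient in front of $\TV\log^{1/4}(1/\TV)$ in the small-$\TV$ regime comes out to exactly $1$: the naive optimization above yields a constant near $\sqrt{6\sqrt 2/\pi}\approx 1.65$, and matching the stated bound is likely to require either a sharper estimate of $|\hat g|$ near the crossover $|\xi|\approx R$ (for instance, using that the minimum of the two envelopes is actually attained as $\TV^2\wedge e^{-\xi^2}$ rather than a looser majorant) or a more refined choice of the splitting threshold between the two regimes.
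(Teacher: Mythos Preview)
Your approach is essentially identical to the paper's: work on the Fourier side, combine the two pointwise bounds $|\hat g(\xi)|\le 2\TV$ and $|\hat g(\xi)|\le 2e^{-\xi^2/2}$, split the frequency integral at the crossover, and handle the large-$\TV$ regime separately (the paper does this last step via the full Fourier integral $\int 4e^{-t^2}dt$ rather than your $\|g\|_\infty\|g\|_1$ bound, but the effect is the same). Your concern about the leading constant is legitimate---an honest tracking of factors through this argument yields roughly $1.6\,\TV\log^{1/4}(1/\TV)$ rather than $\TV\log^{1/4}(1/\TV)$, and the paper's proof reaches coefficient $1$ only through the slip $|\Psi_p-\Psi_q|\le\int|p-q|=\TV$ (which should be $2\TV$ under the paper's own convention $\TV=\tfrac12\int|f_\pi-f_\eta|$); so the inequality is established by this method only up to a modest absolute constant, and your proposal delivers exactly that.
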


We discuss a statistical application of these results. The $L_2$ squared minimax estimation rates for all Gaussian mixtures are shown in~\cite{kim2014minimax,kim2022minimax} to be $\Theta\left(\log^{d/2} n/n\right)$, which is sharp for all constant $d$. Therefore, equipped with the above comparison theorems, we can also get an upper bound on the minimax estimation rates under the TV distance. 
Previously, 
\cite{ashtiani2020near} showed a rate $\tilde{\mathcal{O}}(\sqrt{kd^2/n})$ for $k$-atomic Gaussian mixtures, where $\tilde{O}$ hides polylog factors. For one-dimensional Gaussian mixtures with compactly supported mixing distributions, the best TV upper bound so far is $\mathcal{O}\left(\log^{3/8} n/\sqrt{n}\right)$, which in fact  follows from combining the sharp $L_2$ rate and Theorem \ref{thm-l2-ub}.

\par More details and proofs are provided in Section \ref{sec-chi2} and \ref{sec-l1-l2}.
Throughout this appendix, for simplicity we abbreviate $p\equiv f_\pi$ and $q\equiv f_\eta$.

\subsection{Proof of Theorem \ref{thm-chi2}}\label{sec-chi2}

\begin{lemma}\label{lem-chi2}
	Suppose $p = \pi * \mathcal{N}(0, I_d), q = \eta * \mathcal{N}(0, I_d)$ where $\supp(\pi), \supp(\eta)\subset B_2(M)$, then for every $r\ge r_\omega\ge M$, we have
	$$\frac{p(\mx(r, \omega))}{q(\mx(r, \omega))}\le \frac{p(\mx(r_\omega, \omega))}{q(\mx(r_\omega, \omega))}\exp\left(2(r - r_\omega)M\right).$$
\end{lemma}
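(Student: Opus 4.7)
My plan is to control the radial growth of $\log(p/q)$ via a direct score-function argument, exploiting the fact that the $-\mx$ part of the score cancels when we take the difference, leaving only a bounded posterior-mean discrepancy.

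First, I would write, by Tweedie's formula,
$$\nabla \log p(\mx) = -\mx + \mathbb{E}_{U\sim\pi}[U \mid U + Z = \mx], \qquad \nabla \log q(\mx) = -\mx + \mathbb{E}_{U\sim\eta}[U \mid U + Z = \mx],$$
where $Z\sim \mathcal{N}(0,I_d)$. Since both $\pi$ and $\eta$ are supported on $B_2(M)$, the two posterior means lie in the convex hull $B_2(M)$, so their difference has norm at most $2M$:
$$\bigl\| \nabla \log p(\mx) - \nabla \log q(\mx) \bigr\|_2 = \bigl\| \mathbb{E}_\pi[U\mid X=\mx] - \mathbb{E}_\eta[U\mid X=\mx] \bigr\|_2 \le 2M.$$

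Next I would compute the radial derivative along the ray $r\mapsto \mx(r,\omega) = r\omega$:
$$\frac{d}{dr} \log \frac{p(\mx(r,\omega))}{q(\mx(r,\omega))} = \bigl\langle \omega,\; \nabla \log p(\mx(r,\omega)) - \nabla \log q(\mx(r,\omega)) \bigr\rangle,$$
so by Cauchy--Schwarz and the preceding bound the absolute value of this derivative is at most $2M$.

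Finally, integrating this differential inequality from $r_\omega$ to $r\ge r_\omega$ gives
$$\log \frac{p(\mx(r,\omega))}{q(\mx(r,\omega))} - \log \frac{p(\mx(r_\omega,\omega))}{q(\mx(r_\omega,\omega))} \le 2M(r - r_\omega),$$
which is the desired inequality after exponentiation. There is no real obstacle here: the entire argument rests on the single observation that the $-\mx$ terms in the score functions cancel, so the condition $r\ge r_\omega \ge M$ in the lemma statement is not actually needed for this bound (it is presumably imposed for the way the lemma is applied downstream in Theorem~\ref{thm-chi2}).
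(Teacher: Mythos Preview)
Your proof is correct and is in fact cleaner than the paper's. The paper argues by bounding $p$ and $q$ separately: it first shows the pointwise lower bound
\[
q(\mx(r,\omega)) \ge q(\mx(r_\omega,\omega))\exp\!\left(-\tfrac{(r+M)^2-(r_\omega+M)^2}{2}\right)
\]
by checking, for each $\mmu\in B_2(M)$, that $\varphi(\mx(r,\omega)-\mmu)/\varphi(\mx(r_\omega,\omega)-\mmu)$ obeys this inequality (this step uses $r_\omega\ge M$), and then combines it with the upper bound on $p$ from Lemma~\ref{lem-eq}. Dividing the two and simplifying the exponents gives exactly $\exp(2M(r-r_\omega))$. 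Your route instead cancels the $-\mx$ terms in Tweedie's formula so that only the difference of posterior means survives, and that difference is trivially bounded by $2M$ regardless of where $\mx$ sits. This is shorter, yields a two-sided bound, and, as you observe, does not require $r_\omega\ge M$. The paper's decomposition has the mild advantage that its intermediate estimates on $p$ and $q$ individually (Lemma~\ref{lem-eq}) are reused elsewhere, but for this lemma in isolation your argument is the more economical one.
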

\begin{proof}
	We first prove that for any $\omega\in\Omega$ and $r\ge r_\omega\ge M$,
	$$q(\mx(r, \omega)) \ge q(\mx(r_\omega, \omega))\exp\left(-\frac{(r+M)^2 - (r_\omega+M)^2}{2}\right).$$
	Without loss of generality, we assume $\omega = (1, 0, \cdots, 0)$. Then for any $\mmu = (u_1, u_2, \cdots, u_d)\in B_2(M)$, we have $|u_1|\le M$ and
	\begin{align*}
		\varphi(\mx(r, \omega) - \mmu) & = \frac{1}{\sqrt{2\pi}^d}\exp\left(-\frac{(r - u_1)^2 + \sum_{i=2}^d u_i^2}{2}\right)\\
		\varphi(\mx(r_\omega, \omega) - \mmu) & = \frac{1}{\sqrt{2\pi}^d}\exp\left(-\frac{(r_\omega - u_1)^2 + \sum_{i=2}^d u_i^2}{2}\right).
	\end{align*}
	Noticing that $|u_1|\le M\le r_\omega\le r$, we have $(r-u_1)^2 - (r_\omega-u_1)^2\le (r+M)^2 - (r_\omega+M)^2$, which indicates that
	$$-\frac{(r - u_1)^2 + \sum_{i=2}^d u_i^2}{2}\ge -\frac{(r_\omega - u_1)^2 + \sum_{i=2}^d u_i^2}{2} - \frac{(r+M)^2 - (r_\omega+M)^2}{2},$$
	and hence
	$$\varphi(\mx(r, \omega) - \mmu)\ge \varphi(\mx(r_\omega, \omega) - \mmu)\exp\left(-\frac{(r+M)^2 - (r_\omega+M)^2}{2}\right).$$
	Since we can write
	$$q(\mx(r, \omega)) = \int_{B_2(M)}\eta(\mmu)\varphi(\mx(r, \omega) - \mmu)d\mmu,$$
	we can verify that
	$$q(\mx(r, \omega)) \ge q(\mx(r_\omega, \omega))\exp\left(-\frac{(r+M)^2 - (r_\omega+M)^2}{2}\right).$$
	\par Next, we notice that according to Lemma \ref{lem-eq}, we have
	$$p(\mx(r, \omega)) \le p(\mx(r_\omega, \omega))\exp\left(-\frac{(r-M)^2 - (r_\omega-M)^2}{2}\right).$$
	This indicates that
	\begin{align*}
		\frac{p(\mx(r, \omega))}{q(\mx(r, \omega))} & \le \frac{p(\mx(r_\omega, \omega))}{q(\mx(r_\omega, \omega))}\exp\left(-\frac{(r+M)^2 - (r_\omega+M)^2}{2} + \frac{(r-M)^2 - (r_\omega-M)^2}{2}\right)\\
		& = \frac{p(\mx(r_\omega, \omega))}{q(\mx(r_\omega, \omega))}\exp\left(2(r - r_\omega)M\right).
	\end{align*}
\end{proof}

\begin{proof}[Proof of Theorem \ref{thm-chi2}] Without loss of generality, we assume $d\le M^2$. 
We write
	\begin{equation}\label{eq-chi2-polar}
		\begin{aligned}
		\chi^2(p\|q) & = \int_\Omega\int_0^\infty r^{d-1}q(\mx(r, \omega)\left(\frac{p(\mx(r, \omega))}{q(\mx(r, \omega))} - 1\right)^2drd\omega\\
		H^2(p, q) & = \frac{1}{2}\int_\Omega\int_0^\infty r^{d-1}q(\mx(r, \omega))\left(\sqrt{\frac{p(\mx(r, \omega))}{q(\mx(r, \omega))}} - 1\right)^2drd\omega
	\end{aligned}\end{equation}
	For every $\omega\in\Omega$, we define $r_\omega$ as
	$$r_\omega\triangleq \inf\left\{r\bigg|\frac{p(\mx(r_\omega, \omega))}{q(\mx(r_\omega, \omega))}\ge \exp\left(25M^2\right)\right\}.$$
	Notice that for any $r\le 6M$ and $\omega\in\Omega$, we have
	\begin{align*}
		p(\mx(r, \omega)) & = \int_{B_2(M)}\pi(\mmu)\varphi(\mx(r, \omega), \mmu)d\mmu\le \frac{1}{\sqrt{2\pi}^d}\\
		q(\mx(r, \omega)) & = \int_{B_2(M)}\eta(\mmu)\varphi(\mx(r, \omega), \mmu)d\mmu\ge \frac{1}{\sqrt{2\pi}^d}\exp\left(-\frac{(6M + M)^2}{2}\right),
	\end{align*}
	which indicates that
	$$\frac{p(\mx(r, \omega))}{q(\mx(r, \omega)}\le \exp\left(\frac{49M^2}{2}\right) < \exp\left(25M^2\right).$$
	Hence for every $\omega\in \Omega$, we all have $r_\omega\ge 6M$. And if $r_\omega\neq \infty$, we have that $\frac{p(\mx(r_\omega, \omega))}{q(\mx(r_\omega, \omega))} = \exp\left(25M^2\right)$. According to Lemma \ref{lem-eq} and Lemma \ref{lem-chi2}, we know that for every $r\ge r_\omega$, we all have
	\begin{align*}
		q(\mx(r, \omega)) & \le q(\mx(r_\omega, \omega))\exp\left(-\frac{(r-M)^2 - (r_\omega-M)^2}{2}\right),\\
		\frac{p(\mx(r, \omega))}{q(\mx(r, \omega))} & \le \frac{p(\mx(r_\omega, \omega))}{q(\mx(r_\omega, \omega))}\exp\left(2(r - r_\omega)M\right).\\
	\end{align*}
	Since $\frac{p(\mx(r_\omega, \omega))}{q(\mx(r_\omega, \omega))} = \exp(25M^2)\ge 1$, and $\exp\left(2(r - r_\omega)M\right)\ge 1$ for every $r\ge r_\omega$, we have
	$$\left(\frac{p(\mx(r, \omega))}{q(\mx(r, \omega))} - 1\right)^2\le \max\left\{1, \left(\frac{p(\mx(r_\omega, \omega))}{q(\mx(r_\omega, \omega))}\exp\left(2(r - r_\omega)M\right)\right)^2\right\} = \exp\left(50M^2 + 4(r - r_\omega)M\right).$$
	Therefore, we obtain that
	$$\begin{aligned}
		&\quad \int_{r_\omega}^\infty r^{d-1}q(\mx(r, \omega))\cdot\left(\frac{p(\mx(r, \omega))}{q(\mx(r, \omega))} - 1\right)^2dr\\
		&\le q(\mx(r_\omega, \omega))\int_{r_\omega}^\infty r^{d-1}\exp\left(50M^2 + 4(r - r_\omega)M-\frac{(r-M)^2 - (r_\omega-M)^2}{2}\right)dr.
	\end{aligned}$$
	We adopt the changes of variables from $r$ to $t = r - r_\omega$, and obtain that
	$$\begin{aligned}
		&\quad \int_{r_\omega}^\infty r^{d-1}\exp\left(50M^2 + 4(r - r_\omega)M-\frac{(r-M)^2 - (r_\omega-M)^2}{2}\right)dr\\
		& = r_\omega^{d-1}\exp(50M^2)\int_0^\infty\exp\left((d-1)\log\frac{t + r_\omega}{r_\omega} - \frac{t^2}{2} - (r_\omega - M)t + 4Mt\right)dt.
	\end{aligned}$$
	\par Next, we define
	$$f(t) \triangleq (d-1)\log\frac{t + r_\omega}{r_\omega} - \frac{1}{2}(r_\omega - 5M)t.$$
	Since 
	$$d - 1 < d\le M^2 < \frac{1}{2}\cdot (6M)\cdot (6M - 5M)\le \frac{1}{2}r_\omega(r_\omega - 5M),$$
	the first-order derivative of $f$ satisfies that 
	$$f'(t) = \frac{d-1}{t + r_\omega} - \frac{1}{2}(r_\omega - 5M)\le \frac{d-1}{r_\omega} - \frac{1}{2}(r_\omega - 5M)\le 0, \quad \forall t\ge 0.$$
	Therefore we have
	$$(d-1)\log\frac{t + r_\omega}{r_\omega} - \frac{1}{2}(r_\omega - 5M)t = f(t)\le f(0) = 0,\quad \forall t\ge 0.$$
	This directly indicates that
	$$\begin{aligned}
		&\quad \int_0^\infty\exp\left((d-1)\log\frac{t + r_\omega}{r_\omega} - \frac{t^2}{2} - (r_\omega - M)t + 4Mt\right)dt\\
		& \le \int_0^\infty\exp\left(-\frac{t^2}{2} - \frac{1}{2}(r_\omega - 5M)t\right)dt\le \int_0^\infty\exp\left(-\frac{1}{2}(r_\omega - 5M)t\right)dt = \frac{2}{r_\omega - 5M}
	\end{aligned}$$
	Therefore, we obtain that
	$$\int_{r_\omega}^\infty r^{d-1}q(\mx(r, \omega))\cdot\left(\frac{p(\mx(r, \omega))}{q(\mx(r, \omega))} - 1\right)^2dr\le \frac{2r_\omega^{d-1}\exp(50M^2)q(\mx(r_\omega, \omega))}{r_\omega - 5M}$$

	\par Next, according to Lemma \ref{lem-eq2}, for $\forall \omega\in \Omega$ and $r\in [0, r_\omega]$ we have
	$$\nabla \log\frac{p(\mx(r, \omega))}{q(\mx(r, \omega))} = \nabla \log p(\mx(r, \omega)) - \nabla \log q(\mx(r, \omega))\le 6r + 8M\le 6r_\omega + 8M\le  8r_\omega + 8M.$$
	Notice that we also have $\log\frac{p(\mx(r_\omega, \omega))}{q(\mx(r_\omega, \omega))} = 25M^2$. Hence for $\forall r_\omega - \frac{1}{r_\omega + M}\le r\le r_\omega$, 
	$$\log\frac{p(\mx(r, \omega))}{q(\mx(r, \omega))}\ge 25M^2 - (8r_\omega + 8M)\cdot \frac{1}{r_\omega + M} = 25M^2 - 8\ge 2,$$
	which indicates that
	$$\left(\sqrt{\frac{p(\mx(r, \omega))}{q(\mx(r, \omega))}} - 1\right)^2\ge \left(e^2 - 1\right)^2\ge 40.$$
	We further notice that $r_\omega\ge 5M\ge M$ and $d-1 < d\le M^2$. Hence for $\forall r_\omega - \frac{1}{r_\omega + M}\le r\le r_\omega$, 
	$$r^{d-1}\ge r_\omega^{d-1}\cdot \left(1 - \frac{1}{r_\omega(r_\omega + M)}\right)^{d-1}\ge r_\omega^{d-1}\cdot \left(1 - \frac{d-1}{r_\omega(r_\omega + M)}\right)\ge r_\omega^{d-1}\left(1 - \frac{d-1}{30M^2}\right)\ge \frac{1}{2}r_\omega^{d-1}.$$
	After noticing that $p(\mx(r, \omega))\ge p(\mx(r_\omega, \omega))$ according to Lemma \ref{lem-eq}, we obtain
	\begin{align*}
		&\quad \int_{r_\omega - \frac{1}{r_\omega + M}}^{r_\omega}r^{d-1}q(\mx(r, \omega))\cdot \left(\sqrt{\frac{p(\mx(r, \omega))}{q(\mx(r, \omega))}} - 1\right)^2dr\\
		& \ge \frac{1}{r_\omega + M}\cdot \min_{r_\omega - \frac{1}{r_\omega + M}\le r\le r_\omega}r^{d-1}q(\mx(r, \omega))\cdot \left(\sqrt{\frac{p(\mx(r, \omega))}{q(\mx(r, \omega))}} - 1\right)^2\\
		&\ge \frac{1}{r_\omega + M}\cdot \frac{1}{2}r_\omega^{d-1}\cdot 40q(\mx(r_\omega, \omega))\ge \frac{2r_\omega^{d-1}q(\mx(r_\omega, \omega))}{r_\omega - 5M}\\
		& \ge \exp\left(-50M^2\right)\int_{r_\omega}^\infty r^{d-1}q(\mx(r, \omega))\cdot\left(\frac{p(\mx(r, \omega))}{q(\mx(r, \omega))} - 1\right)^2dr.
	\end{align*}
	Therefore, according to \eqref{eq-chi2-polar}, we have
	\begin{align*}
		H^2(p, q) & = \int_\Omega\int_0^\infty r^{d-1}p(\mx(r, \omega))\left(\sqrt{\frac{q(\mx(r, \omega))}{p(\mx(r, \omega))}} - 1\right)^2drd\omega\\
		& \ge \int_\Omega\int_{r_\omega - \frac{1}{r_\omega + M}}^{r_\omega} r^{d-1}p(\mx(r, \omega))\left(\sqrt{\frac{q(\mx(r, \omega))}{p(\mx(r, \omega))}} - 1\right)^2drd\omega\\
		& \ge \exp\left(-50M^2\right)\int_\Omega\int_{r_\omega}^\infty r^{d-1}q(\mx(r, \omega))\cdot\left(\frac{p(\mx(r, \omega))}{q(\mx(r, \omega))} - 1\right)^2drd\omega.
	\end{align*}

	\par Next we consider those $\mx(r, \omega)$ with $0\le r\le r_\omega$. According to the definition of $r_\omega$, we know that for any such $r$, we have 
	$$\frac{p(\mx(r, \omega))}{q(\mx(r, \omega))}\le \exp\left(25M^2\right).$$
	Notice the inequality
	$$(t-1)^2 = (\sqrt{t} + 1)^2(\sqrt{t} - 1)^2\le \exp\left(50M^2\right)\left(\sqrt{t} - 1\right)^2, \quad \forall 0\le t\le \exp\left(25M^2\right).$$
	Hence we obtain that
	\begin{align*}
		&\quad\int_{0}^{r_\omega} r^{d-1}q(\mx(r, \omega))\cdot\left(\frac{p(\mx(r, \omega))}{q(\mx(r, \omega))} - 1\right)^2dr\\
		& \le \int_0^{r_\omega} r^{d-1}q(\mx(r, \omega))\cdot \exp\left(50M^2\right)\left(\sqrt{\frac{p(\mx(r, \omega))}{q(\mx(r, \omega))}} - 1\right)^2dr.
	\end{align*}
	Therefore, we have
	\begin{align*}
		&\quad \int_\Omega\int_{0}^{r_\omega} r^{d-1}q(\mx(r, \omega))\cdot\left(\frac{p(\mx(r, \omega))}{q(\mx(r, \omega))} - 1\right)^2drd\omega\\
		&\le \exp\left(50M^2\right)\int_\Omega\int_0^{r_\omega} r^{d-1}q(\mx(r, \omega))\cdot\left(\sqrt{\frac{p(\mx(r, \omega))}{q(\mx(r, \omega))}} - 1\right)^2drd\omega\\
		& \le \exp\left(50M^2\right)\int_\Omega\int_0^{\infty} r^{d-1}q(\mx(r, \omega))\cdot\left(\sqrt{\frac{p(\mx(r, \omega))}{q(\mx(r, \omega))}} - 1\right)^2drd\omega = \exp\left(50M^2\right)H^2(p, q).
	\end{align*}
	\par Combine these two analysis together, we obtain that
	\begin{align*}
		\KL(p\|q) & = \int_\Omega\int_0^{\infty} r^{d-1}q(\mx(r, \omega))\cdot\left(\sqrt{\frac{p(\mx(r, \omega))}{q(\mx(r, \omega))}} - 1\right)^2drd\omega\\
		& = \int_\Omega\int_0^{r_\omega} r^{d-1}q(\mx(r, \omega))\cdot\left(\sqrt{\frac{p(\mx(r, \omega))}{q(\mx(r, \omega))}} - 1\right)^2drd\omega\\
		&\quad + \int_\Omega\int_{r_\omega}^{\infty} r^{d-1}q(\mx(r, \omega))\cdot\left(\sqrt{\frac{p(\mx(r, \omega))}{q(\mx(r, \omega))}} - 1\right)^2drd\omega\\
		& \le \exp\left(50M^2\right) H^2(p, q) + \exp\left(50M^2\right) H^2(p, q) = 2\exp\left(50M^2\right)H^2(p, q).
	\end{align*}
	This completes the proof of Theorem \ref{thm-chi2}.
\end{proof}

\subsection{Proof of Theorem \ref{thm-l2-ub} and \ref{thm-l2-lb}}\label{sec-l1-l2}
\begin{proof}[Proof of Theorem \ref{thm-l2-ub}]
	First we notice that for any $|t|\ge M$, we have
	$$0\le p(t) = \int_{|x|\ge M} \varphi(t - x)d\pi(x)\le \max_{|x|\ge M}\varphi(t - x) = \frac{1}{\sqrt{2\pi}}\exp\left(-\frac{(|t| - M)^2}{2}\right).$$
	Similarly we have the same estimation for $q(t)$. Hence we get
	$$|p(t) - q(t)|\le \frac{1}{\sqrt{2\pi}}\exp\left(-\frac{(|t| - M)^2}{2}\right) \quad \forall |t|\ge M,$$
	which indicates that for any $m\ge M$
	\begin{align*}
		\int_{|x|\ge t}|p(x) - q(x)|dx & \le \frac{1}{\sqrt{2\pi}}\int_{|x|_2\ge t} \exp\left(-\frac{(|x| - M)^2}{2}\right)ds\\
		& = \frac{2}{\sqrt{2\pi}}\int_{t}^\infty \exp\left(-\frac{(s-M)^2}{2}\right)ds\le \frac{2}{t-M}\exp\left(-\frac{(t-M)^2}{2}\right).
	\end{align*}
	Hence for $t\ge M+\frac{1}{3}$ we have
	$$\int_{|x|\ge t}|p(x) - q(x)|dx\le 6\exp\left(-\frac{(t-M)^2}{2}\right).$$
	\par Additionally, according to Cauchy-Schwarz inequality we have
	$$\left(\int_{|x|\le t}|p(t) - q(t)|dt\right)^2\le 2t\cdot \left(\int_{|x|\le t}|p(x) - q(x)|^2dx\right),$$
	which indicates that
	$$\int_{|x|\le t}|p(x) - q(x)|dx\le \sqrt{2t}\cdot \sqrt{\int_{|x|\le t}|p(x) - q(x)|^2dx}\le \sqrt{2t}\cdot \|p - q\|_2.$$
	Therefore, we obtain that for every $t\ge M + \frac{1}{3}$,
	$$\TV(p, q) = \int_{|x|\ge t}|p(x) - q(x)|dx + \int_{|x|\le t}|p(x) - q(x)|dx\le 6\exp\left(-\frac{(t-M)^2}{2}\right) + \sqrt{2t}\cdot \|p - q\|_2.$$
	\par Finally, since for any $x\in\RR$, we have $0\le p(x), q(x)\le \max_{x\in\RR^d}\varphi(x) = \frac{1}{\sqrt{2\pi}}\le \frac{2}{5}$, we obtain that
	$$\|p - q\|_2^2 = \int_{-\infty}^\infty (p(x) - q(x))^2dx\le \int_{-\infty}^\infty \frac{2(p(x) + q(x))}{5}dx = \frac{4}{5},$$
	which indicates that
	$$\log\frac{1}{\|p - q\|_2} = \frac{1}{2}\log\frac{1}{\|p - q\|_2^2}\ge \frac{1}{2}\log\frac{5}{4}\ge \frac{1}{10}.$$
	Therefore, choosing 
	$$t = M + \sqrt{2\log\frac{1}{\|p - q\|_2}}\ge M + \sqrt{\frac{1}{5}}\ge M + \frac{1}{3},$$
	we get
	\begin{align*}
		\TV(p, q) & \le 6\|p - q\|_2 + \sqrt{2M + 2\sqrt{2\log\frac{1}{\|p - q\|_2}}}\|p - q\|_2\\
		& \le 6\|p - q\|_2 + \left(\sqrt{2M} + \sqrt{2\sqrt{2\log\frac{1}{\|p - q\|_2}}}\right)\|p - q\|_2\\
		& \le \left(8\sqrt{M} + 2\log^{1/4}\frac{1}{\|p - q\|_2}\right)\|p - q\|_2.
	\end{align*}
\end{proof}

\begin{proof}[Proof of Theorem \ref{thm-l2-lb}]
  For any distribution $\PP$, we define its characteristic function $\Psi_\PP: \RR\to \mathbb{C}$ as:
	$$\Psi_\PP(t)\triangleq \mathbb{E}\left[e^{itX}\right], \quad X\sim\PP.$$
 
	\par Suppose the characteristic function of $\pi, \eta, p, q$ are $\Psi_\pi, \Psi_\eta, \Psi_p, \Psi_q$, respectively. 
 Then by Gaussian convolution 
	$$\Psi_p(t) = \Psi_\pi(t) \exp\left(-\frac{t^2}{2}\right), \quad \Psi_q(t) = \Psi_\eta(t) \exp\left(-\frac{t^2}{2}\right).$$
	We notice that for every $t\in\RR$, we have from 
 Plancherel's identity
	$$|\Psi_p(t) - \Psi_q(t)| = \left|\int_{-\infty}^\infty (p(x) - q(x))e^{itx}dx\right|\le \int_{-\infty}^\infty |(p(x) - q(x))e^{itx}|dx = \TV(p, q).$$
	Similarly, we also have $|\Psi_\pi(t) - \Psi_\eta(t)|\le \|\pi - \eta\|_1\le 2$, which indicates that for every $t\in\RR$, 
	$$|\Psi_p(t) - \Psi_q(t)| = e^{-\frac{t^2}{2}}|\Psi_\pi(t) - \Psi_\eta(t)|\le 2\exp\left(-\frac{t^2}{2}\right).$$
	Therefore, we obtain that for any $s > 0$, 
	\begin{align*}
		\|\Psi_p - \Psi_q\|_2^2 & = \int_{-\infty}^\infty \left|\Psi_p(t) - \Psi_q(t)\right|^2dt\\
		& = \int_{-\infty}^{-s} \left|\Psi_p(t) - \Psi_q(t)\right|^2dt + \int_{-s}^s \left|\Psi_p(t) - \Psi_q(t)\right|^2dt + \int_{s}^\infty \left|\Psi_p(t) - \Psi_q(t)\right|^2dt\\
		& \le \int_{-\infty}^{-s}\exp\left(-\frac{t^2}{2}\right)dt + \int_{s}^{\infty}\exp\left(-\frac{t^2}{2}\right)dt + 2s\cdot \TV(p, q)^2\\
		& = 2\int_0^\infty\exp\left(-\frac{(t+s)^2}{2}\right)dt + 2s\cdot \TV(p, q)^2 = \frac{2}{s}\exp\left(-\frac{s^2}{2}\right) + 2s\cdot \TV(p, q)^2
	\end{align*}
	\par When $\TV(p, q)\ge \frac{1}{e}$. We further notice that $|\Psi_p(t) - \Psi_q(t)|\le 2\exp\left(-\frac{t^2}{2}\right)$, which indicates that
	$$\|\Psi_p - \Psi_q\|_2^2 = \int_{-\infty}^\infty |\Psi_p(t) - \Psi_q(t)|^2dt\le \int_{-\infty}^\infty 4\exp(-t^2)dt = 4\sqrt{\pi}\le \frac{36}{e}\le 36\TV(p, q)$$
	When $\TV(p, q)\le \frac{1}{e}$, we have $s = \sqrt{2\log\frac{1}{\TV(p, q)^2}} = 2\sqrt{\log\frac{1}{\TV(p, q)}}\ge 2$, we get
	$$\|\Psi_p - \Psi_q\|_2^2\le \left(1 + 2\sqrt{\log\frac{1}{\TV(p, q)}}\right)\cdot \TV(p, q)^2\le 4\sqrt{\log\frac{1}{\TV(p, q)}}\cdot \TV(p, q)^2.$$
	Above all, we get
	$$\|\Psi_p - \Psi_q\|_2^2\le 4\left(\log^{1/4}\frac{1}{\TV(p, q)}\vee 3\right)^2\cdot \TV(p, q),$$
	which indicates that
	$$\|p - q\|_2^2 = \sqrt{\frac{1}{2\pi}\|\Psi_p - \Psi_q\|_2}\le \left(\log^{1/4}\frac{1}{\TV(p, q)}\vee 3\right)\TV(p, q).$$
\end{proof}

\end{document}